\def\fpd#1#2{{\displaystyle\frac{\partial #1}{\partial #2}}}
\def\spd#1#2#3{{\displaystyle\frac{\partial^2 #1}
{\partial #2\partial #3}}}
\def\lie#1{{\cal L}_{#1}}
\def\vf#1{{\displaystyle\frac{\partial }{\partial #1}}}
\def\onehalf{{\textstyle\frac12}}
\def\conn#1#2#3{\setbox1=\hbox{$\scriptstyle{#2}{#3}$}%
\setbox2=\hbox to\wd1{$\hfil\scriptstyle{#1}\hfil$}
\Gamma^{\!\box2}_{\!\box1}}
\def\TMO{T^\circ\!M}
\def\TxMO{T_x^\circ\!M}
\def\TNO{T^\circ\!N}
\def\clift#1{#1^{\scriptscriptstyle{\mathrm{C}}}}
\def\hlift#1{#1^{\scriptscriptstyle{\mathrm{H}}}}
\def\vlift#1{#1^{\scriptscriptstyle{\mathrm{V}}}}
\def\rk{\mathop{\mathrm{rank}}}
\def\D{\mathcal{D}}
\def\J{\mathfrak{J}}
\def\L{\mathfrak{L}}
\newcommand{\R}{\mathbb{R}}
\def\S{\mathfrak{S}}
\renewcommand{\a}{\alpha}
\renewcommand{\b}{\beta}
\newcommand{\Dt}{\tilde{\Delta}}
\newcommand{\K}{\mathcal{K}}
\renewcommand{\P}{\mathrm{P}}
\newcommand{\Se}{Segre}
\newcommand{\To}{T^\circ}
\newcommand{\trac}{\mathcal{T}M}
\newcommand{\traco}{\mathcal{T}^\circ\!M}
\newcommand{\TVO}{\To\kern-0.08em(\VM)}
\newcommand{\U}{\Upsilon}
\newcommand{\Ut}{\tilde{\U}}
\newcommand{\V}{\mathcal{V}}
\newcommand{\VM}{\V\kern-0.08em M}
\newtheorem{thm}{Theorem}
\newtheorem{lem}{Lemma}
\newtheorem{prop}{Proposition}
\newtheorem{cor}{Corollary}
\begin{document}

\begin{frontmatter}
\title{Hilbert forms for a Finsler metrizable projective class of sprays}

\author{M.\ Crampin, T.\ Mestdag}
\address{Department of Mathematics, Ghent University,
Krijgslaan 281, B--9000 Gent, Belgium}
\author{and D.\,J.\ Saunders}
\address{Department of Mathematics, The University of Ostrava,
30.\ dubna 22, 701 03 Ostrava, Czech Republic}

\begin{abstract}\noindent
The projective Finsler metrizability problem deals with the
question whether a projective-equivalence class of sprays is the
geodesic class of a (locally or globally defined) Finsler
function. In this paper we use Hilbert-type forms to state a
number of different ways of specifying necessary and sufficient
conditions for this to be the case, and we show that they are
equivalent. We also address several related issues of interest
including path spaces, Jacobi fields, totally-geodesic
submanifolds of a spray space, and the equivalence of path
geometries and projective-equivalence classes of sprays.
\end{abstract}

\begin{keyword}
Spray\sep projective equivalence\sep geodesic path\sep  path geometry\sep
Finsler function\sep projective metrizability\sep Hilbert form\sep
almost Grassmann structure\sep path space\sep Jacobi field\sep totally-geodesic
submanifold

\MSC 53C60
\end{keyword}

\end{frontmatter}

\section{Introduction}

A Finsler function can in many ways be regarded as a singular
Lagrangian. As such, there are many sprays whose base integral
curves are solutions of the Euler-Lagrange equations of a given
Finsler function. These sprays are all projectively equivalent
and together they constitute the geodesic class of sprays of the
given Finsler function. It is therefore natural to ask whether
or not a given projective-equivalence class (or projective
class, for short) of sprays is the geodesic class of some
Finsler function, or, in the terminology of this paper, whether
or not a projective class of sprays is projectively Finsler
metrizable.

One may think of (at least) three approaches to formulating
the necessary and sufficient conditions for this to be the case.
They differ with respect to what kind of geometric object the
conditions are expressed in terms of:
\begin{enumerate}
\item a multiplier, that is, a symmetric twice covariant tensor along
the tangent bundle projection $\tau$, leading to Helmholtz-like conditions;
\item a semi-basic 1-form, leading to the conditions given by
Bucataru and Muzsnay \cite{BM} for such a form to be a Hilbert 1-form;
\item a 2-form, leading to conditions for such a form to be a
Hilbert 2-form.
\end{enumerate}
The third item can be further subdivided:
\begin{enumerate}
\item[3.1] the 2-form is given on the slit tangent bundle,
leading to conditions similar to those given for the `ordinary'
inverse problem of the calculus of variations by the first
author as long ago as 1981 \cite{MCold};
\item[3.2] the 2-form is given on a certain manifold on which is
defined an almost Grassmann structure associated with the
projective class, leading to conditions formulated by the first
and third authors in \cite{Japan};
\item[3.3] the 2-form is given on path space, leading to conditions
discussed by  \'{A}lvarez Paiva in \cite{AP}.
\end{enumerate}

Note that unlike \'{A}lvarez Paiva, who in \cite{AP} deals
only with reversible paths, that is, paths which have no
preferred orientation, we cover in this paper the more general case of
oriented paths, or sprays in the fully general sense.

We have discussed the multiplier approach in detail in
\cite{mult}. In this paper we deal with the versions of the
conditions involving forms, that is, items 2 and 3.1--3.3 of the lists
above.

It might be argued that there are two additional approaches that
should be taken into account. One is the use of the Rapcs\'{a}k
conditions (which are discussed in \cite{Shen,Szi} for example).
We prefer to think of these conditions as just being
reformulations of the Euler-Lagrange equations. They do play a
significant role in our analysis of the multiplier problem, and
have been discussed in \cite{mult}. The other is the holonomy
method described in \cite{hol}. This approach is well-suited to
the problem of determining whether a given spray is the
canonical spray of a Finsler function, that is, the one whose
integral curves are parametrized (up to affine transformations)
by arc-length. However, it is not easily adapted to the
projective problem which is the subject of this paper. We do not
consider it further here therefore.

To the best of our knowledge, this paper states for the first
time the metrizability conditions in terms of 2-forms on the
slit tangent bundle. We also address the global aspects of the
problem. The main purpose of this paper, however, is to discuss
the relationship between the various approaches enumerated
above, and in particular to show that they are equivalent. Such
a discussion is in particular needed because comparison of the
different results in the literature is far from obvious. To give
just one example:\ whereas most authors consider the projective
class of sprays as the main object under investigation, others,
in particular \'{A}lvarez Paiva, give priority to the paths. We
have therefore considered it desirable to discuss the
relationship between what is called by \'{A}lvarez Paiva in
\cite{AP} a path geometry, and a projective class of
sprays. In the course of the discussion it will also be necessary
to address a number of issues related to Finsler geometry and
the projective geometry of sprays which are of interest in their
own right, including Jacobi fields and totally-geodesic
submanifolds of a spray space. We express our results as far as
possible in projectively-invariant terms; in particular, this
means that throughout we use the Finsler function rather than
the energy, and avoid reference to the canonical geodesic spray.
In the terminology introduced in \cite{SzVa} we deal entirely
with the problem of metrizability in the broad sense.

The paper begins with a version of \'{A}lvarez Paiva's definition
of a path geometry adapted to the concerns of this paper. We
show that in fact there is no loss of generality in working with
sprays.

In Section 3 we give a summary of the relevant results on the
multiplier problem from \cite{mult}. In Section 4 we quote the
theorem of Bucataru and Muzsnay mentioned in item 2 above, and
show that the conditions it contains are equivalent to those
that must be satisfied by a multiplier. In Section 5 we give the
most straightforward of the formulations of the conditions in
terms of the existence of a 2-form with certain properties, and
in the following section the somewhat more sophisticated version
in which the 2-form is specified on a certain manifold which
carries an almost Grassmann structure associated with a given
projective class of sprays.

All three of the versions of the conditions discussed in
Sections 4--6 involve closed 2-forms of which the involutive
distribution $\D$ determined by the projective class (see the next section)
is the characteristic distribution. A natural further step
therefore is to quotient out by $\D$, as one might say. Where
this is possible the manifold obtained is called the path space,
since each of its points represents a geodesic path of the
projective class. The 2-form in question passes to the quotient
to define a symplectic form there. In Section 7 we elaborate on
this construction and begin the discussion of the further
properties of the symplectic structure. As we show in Section
8, tangent vectors to path space can be thought of as Jacobi
fields. Using this insight we reformulate the positive
quasi-definiteness property of the multiplier required for the
local existence of a Finsler function.

One much discussed special case of the projective metrizability
problem is that raised by the Finslerian version of Hilbert's
fourth problem; this indeed is the main subject of \cite{AP}. In
\'{A}lvarez Paiva's analysis an important role is played by
2-planes in $\R^n$. From the more general point of view adopted
here what is significant about planes in $\R^n$ is that they are
totally-geodesic submanifolds. We develop a theory of
totally-geodesic submanifolds of spray manifolds in Section 9, and use
it to give a modest generalization of one of the results of
\cite{AP}. The paper ends with an illustrative example.

\section{Path geometries and sprays}

We first recall some basic concepts from spray and Finsler geometry, mainly to fix notations.

We shall always assume that the base manifold $M$ is smooth and
paracompact. Unless it is explicitly stated otherwise, we assume
that $\dim M\geq 3$. The slit tangent bundle of $M$ is the
tangent bundle with the zero section removed. We shall denote it
by $\tau:\TMO \to M$.

A {\em spray} is a vector field on $\TMO$ such that
$\tau_*\Gamma_{(x,y)}=y$ for any $x\in M$ and $y\in T_xM$,
$y\neq 0$, and such that $[\Delta,\Gamma]=\Gamma$ where $\Delta$
is the Liouville field. It is locally of the form
\[
\Gamma=y^i\vf{x^i}-2\Gamma^i\vf{y^i}
\]
and it determines a horizontal distribution, spanned by the vector fields
\[
H_i=\vf{x^i}-\Gamma^j_i\vf{y^j},\quad
\Gamma^i_j=\fpd{\Gamma^i}{y^j}.
\]
We shall also write $V_i$ for the vertical vector fields
$\partial/\partial y^i$. Horizontal and vertical lifts of a
vector field $X$ on $M$ are denoted by $\hlift{X}$ and
$\vlift{X}$, respectively.

Two sprays are said to be {\em projectively equivalent} if their
geodesics (base integral curves) are the same up to an
orientation-preserving reparametrization. The geodesics of
projectively equivalent sprays, in other words, define oriented
paths in $M$. Projective equivalence is an equivalence relation
on sprays; an equivalence class is called a projective class of
sprays. If $\Gamma$ is a spray, then any member of its
projective class takes the form $\Gamma-2P\Delta$ for some
function $P$ on $\TMO$ which satisfies $\Delta(P)=P$. Note that
a projective class of sprays determines an involutive
two-dimensional distribution $\D$ on $\TMO$, which is spanned by
$\Delta$ and any spray $\Gamma$ of the class. This distribution
plays an important role in our analysis. We refer to e.g.\
\cite{Shen} for further reading on the geometry of sprays.

We shall work throughout with projective classes of
sprays. It might however be regarded as more natural from the
geometrical point of view to see a projective class of sprays as
merely a surrogate for the collection of its geodesic paths, and
to think of the metrizability problem as the question of whether
a collection of oriented paths, suitably specified, is the set
of geodesic paths of a Finsler structure. \'{A}lvarez Paiva for
example, in \cite{AP} Section 4, has taken such an idea as basic and
formalised it into the concept of a path geometry. In this
section we give a definition of path geometry based on
\'{A}lvarez Paiva's, but differing from his in that it deals
with oriented paths; and we show that there is no loss of
generality in working with sprays.

For any smooth manifold $M$ we denote by $\sigma:STM\to M$ its
sphere bundle, that is, the quotient of $\TMO$ by the action
induced by $\Delta$, so that a point $s$ of $STM$ is an
equivalence class $[y]$ of vectors $y$ in $T_xM$, $x=\sigma(s)$,
where the equivalence relation is multiplication by a positive
scalar. A {\em path geometry\/} on $M$ is a smooth foliation of
$STM$ by oriented one-dimensional submanifolds $\S$ which satisfy
what one might call the second-order property, namely that if
$\S_s$ is the submanifold through $s$, the (oriented) tangent
space to $\sigma(\S_s)$ at $x$ coincides with $[y]$ (where
$s=[y]$).

We define a distribution $\D$ on $\TMO$ as follows:\ $v\in\D_y
\subset T_y\TMO$ if the projection of $v$ to $STM$ is
tangent to $\S_{[y]}$. Then $\D$ is an involutive
two-dimensional smooth distribution on $\TMO$, containing
$\Delta$. We shall show that $\D$ is the distribution
corresponding to a projective class of sprays on $\TMO$.

\begin{thm}
For any given path geometry on $STM$, there is a
projective class of sprays on $\TMO$ such that the
distribution $\D$ is spanned by $\Delta$ and any spray of the
class.
\end{thm}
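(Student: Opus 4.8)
The plan is to reduce the statement to the construction of a single spray $\Gamma$ belonging to $\D$, and then to recover the entire projective class as the set of \emph{all} sprays lying in $\D$. I would first extract from the second-order property the two structural facts that drive everything. Write $q\colon\TMO\to STM$ for the quotient projection, so that $\tau=\sigma\circ q$ and $\ker q_{*y}=\mathrm{span}(\Delta_y)$, and $\D_y=q_*^{-1}(T_s\S_s)$ for $s=q(y)$. The property $\sigma_*(T_s\S_s)=[y]$ makes $\sigma_*$ an isomorphism of the line $T_s\S_s$ onto $\R y$; hence a vector $v\in\D_y$ with $\tau_*v=0$ satisfies $\sigma_*q_*v=0$, so $q_*v=0$ and $v\in\mathrm{span}(\Delta_y)$. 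Therefore $\D_y\cap\ker\tau_{*y}=\mathrm{span}(\Delta_y)$ and $\tau_*(\D_y)=\R y$, so the set $S_y=\{v\in\D_y:\tau_*v=y\}$ is a single affine line, the coset of $\mathrm{span}(\Delta_y)$ consisting of the \emph{tautological} lifts of $y$. Second, since the flow $\phi_t$ of $\Delta$ is the fibrewise scaling $y\mapsto e^ty$ and descends to the identity on $STM$ (so $q\circ\phi_t=q$), one has $q_*\circ\phi_{t*}=q_*$, whence $\phi_{t*}\D_y=\D_{\phi_t(y)}$: the distribution $\D$ is invariant under the $\Delta$-flow.

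The core step is to choose a \emph{smooth} tautological lift that is homogeneous of the correct degree. A tautological lift $\Gamma$ is a spray precisely when $[\Delta,\Gamma]=\Gamma$, equivalently when $\Gamma_{e^ty}=e^t\phi_{t*}\Gamma_y$. I would therefore fix an auxiliary Riemannian metric on $M$ (available by paracompactness), use its unit sphere bundle as a global transversal $\Sigma\subset\TMO$ meeting each ray once, choose over $\Sigma$ a smooth section $y_0\mapsto\Gamma_{y_0}\in S_{y_0}$ of the affine line bundle $y\mapsto S_y$ (such sections exist since affine bundles over a paracompact base do), and extend by $\Gamma_{e^ty_0}:=e^t\phi_{t*}\Gamma_{y_0}$. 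Using $\phi_{t*}\D=\D$ and $\tau\circ\phi_t=\tau$ one checks that the extended field still lies in $\D$ and remains tautological, and that it is homogeneous, so $[\Delta,\Gamma]=\Gamma$; the diffeomorphism $\Sigma\times\R\cong\TMO$, $(y_0,t)\mapsto e^ty_0$, makes $\Gamma$ smooth. The main obstacle is exactly this reconciliation: at each point $S_y$ carries a one-parameter freedom, and one must spend that freedom to satisfy the homogeneity condition \emph{globally and smoothly} rather than merely pointwise. Equivalently, one may start from any local smooth tautological lift $\Gamma_0\in\D$, observe that $[\Delta,\Gamma_0]-\Gamma_0$ is vertical and lies in $\D$ (hence proportional to $\Delta$, say equal to $g\Delta$), and solve the transport equation $\Delta(f)-f=-g$ along the rays for the correction $\Gamma=\Gamma_0+f\Delta$.

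Finally I would identify the projective class and conclude. Any spray $\Gamma'$ lying in $\D$ differs from $\Gamma$ by $\Gamma'-\Gamma$, which is vertical (as $\tau_*(\Gamma'-\Gamma)=y-y=0$) and lies in $\D$, hence equals $h\Delta$ for a function $h$; subtracting $[\Delta,\Gamma]=\Gamma$ from $[\Delta,\Gamma']=\Gamma'$ forces $\Delta(h)=h$, so $\Gamma'=\Gamma-2P\Delta$ with $\Delta(P)=P$. Conversely every such $\Gamma-2P\Delta$ is a spray lying in $\D$. Thus the sprays belonging to $\D$ are precisely the projective class of $\Gamma$; and since $\Gamma\notin\ker\tau_*$ we have $\D=\mathrm{span}(\Delta,\Gamma)$, so $\D$ is spanned by $\Delta$ together with any spray of this class, as required.
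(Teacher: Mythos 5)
Your proof is correct, but it reaches the spray by a genuinely different mechanism from the paper's. You isolate two structural facts --- the tautological lifts $S_y=\{v\in\D_y:\tau_*v=y\}$ form an affine line bundle modelled on $\mathrm{span}(\Delta)$, and $\D$ is invariant under the flow of $\Delta$ --- then choose a global section of that affine bundle over the unit-sphere transversal $\Sigma$ of an auxiliary Riemannian metric (partition of unity), and \emph{define} $\Gamma$ off $\Sigma$ by the homogeneity formula $\Gamma_{e^ty_0}=e^t\phi_{t*}\Gamma_{y_0}$, so that $[\Delta,\Gamma]=\Gamma$ holds by construction. The paper uses the same auxiliary metric but as a scalar gauge rather than as a transversal: with $G$ the Riemannian norm (so $\Delta(G)=G$), it rescales local frame fields of $\D$ to get local second-order fields, and then shows there is a \emph{unique} second-order field in $\D|_U$ satisfying $\Gamma_U(G)=0$; uniqueness makes the local fields agree on overlaps, hence glue to a global $\Gamma$, and the spray condition falls out of a one-line bracket computation, since $[\Delta,\Gamma](G)=\Delta(\Gamma(G))-\Gamma(\Delta(G))=-\Gamma(G)=0$ forces the vertical discrepancy $f\Delta$ in $[\Delta,\Gamma]=\Gamma+f\Delta$ to vanish. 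The trade-off: the paper's normalization does double duty (globalization via uniqueness, homogeneity via the bracket identity) and never needs the $\Delta$-flow or affine-bundle theory; your route makes explicit exactly where the second-order property and the $\Delta$-invariance of $\D$ enter, and builds homogeneity in directly, at the cost of invoking existence of sections of affine bundles and checking smoothness across the transversal. Your closing paragraph, characterizing the sprays lying in $\D$ as precisely the projective class of $\Gamma$, is a worthwhile addition that the paper leaves implicit; note only that your alternative local remark (solving $\Delta(f)-f=-g$ along rays) would still require a choice of initial data on a transversal, so it does not by itself replace the globalization step.
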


\begin{proof}
We have to construct a suitable spray $\Gamma$.

There is a covering of $\TMO$ by open sets $U$, which we may
assume to be connected, such that on $U$ there is a smooth
vector field $Z_U$ such that $\D|_U$ is the span of $\Delta$ and
$Z_U$. The projection of $Z_U$ to $STM$ is tangent to the
foliation, and never vanishes. We may assume that it is oriented
positively with respect to the foliation. Then for every
$(x,y)\in U$, $\tau_*Z_U(x,y)$ is a positive scalar multiple of
$y$, say $\tau_*Z_U(x,y)=\zeta(x,y)y$ where $\zeta$ is a
positive smooth function on $U$. Set
$\tilde{\Gamma}_U=(1/\zeta)Z_U$; then $\tilde{\Gamma}_U$ is a
second-order differential equation field on $U$, and $\D|_U$ is
the span of $\Delta$ and $\tilde{\Gamma}_U$.

The manifold $M$, which is assumed to be paracompact, admits a global
Riemannian metric, say $g$. Denote by $G$ the function on $\TMO$ given by the
Riemannian norm, so that $G(x,y)=\sqrt{g_x(y,y)}$. Note that
$\Delta(G)=G$. We can change the local basis of  $\D|_U$ by adding
some scalar multiple of $\Delta$ to $\tilde{\Gamma}_U$, and we
can do so in such a way that the new vector field
$\Gamma_U=\tilde{\Gamma}_U+f\Delta$ satisfies $\Gamma_U(G)=0$:\
just take $f=-\tilde{\Gamma}_U(G)/G$. Of course, $\Gamma_U$ is
also a second-order differential equation field. It is moreover
uniquely determined by the properties that it is a second-order
differential equation field in $\D|_U$ and satisfies
$\Gamma_U(G)=0$: for if $\Gamma'_U$ also has those properties
then $\Gamma_U-\Gamma'_U$ is vertical, in $\D|_U$, and therefore
a scalar multiple of $\Delta$; but since
$\Gamma_U(G)-\Gamma'_U(G)=0$, while $\Delta(G)=G$, the scalar
factor must be zero.

It follows that there is a globally-defined vector field
$\Gamma$, which is a second-order differential equation field in
$\D$ satisfying $\Gamma(G)=0$, such that $\Gamma_U=\Gamma|_U$.
For if $\Gamma_U$ and $\Gamma_{U'}$ are the unique local vector
fields with those properties on $U$ and $U'$ then by uniqueness
they must agree on $U\cap U'$.

Finally, we show that $\Gamma$ is a spray, that is, that it
satisfies $[\Delta,\Gamma]=\Gamma$. Now $[\Delta,\Gamma]-\Gamma$
is certainly vertical, simply because $\Gamma$ is a second-order
differential equation field. Thus
$[\Delta,\Gamma]=\Gamma+f\Delta$ for some function $f$ on
$\TMO$. But $\Gamma(G)=0$, and
$[\Delta,\Gamma](G)=\Delta(\Gamma(G))-\Gamma(\Delta(G))=-\Gamma(G)=0$;
but $\Delta(G)=G$, and so $f=0$.
\end{proof}

\section{Some results on the multiplier problem}
In order to keep the paper more or less self-contained, we shall
quote here some results from \cite{mult}.

A {\em Finsler function} is a smooth function on $\TMO$, which
is positive, positively (but not necessarily absolutely)
homogeneous, and strongly convex. The last property means that
the matrix of functions
\[ g_{ij} = \spd{F^2}{y^i}{y^j}
\]
must be positive definite. The {\it Hilbert 1- and 2-forms} on $\TMO$
are given, respectively, by
\[
\theta = \fpd{F}{y^i} dx^i \quad \mbox{and}\quad d\theta.
\]
We shall say that $\Gamma$ is a geodesic spray for $F$ if its
base integral curves are solutions of the Euler-Lagrange
equations of $F$. The set of geodesic sprays for $F$ form a
projective class. A modern introduction to Finsler geometry can
be found in \cite{Bao}.

We shall use the term {\em multiplier} for a (0,2) tensor field
$h$ along the slit tangent bundle projection $\tau$. A
multiplier will also be called a tensor or tensor field for
short, and we shall often denote it simply by its components
$h_{ij}(x,y)$.

The conditions on a multiplier that form the basis of the
analysis in \cite{mult} are these:
\begin{align*}
h_{ji}&=h_{ij}\\
h_{ij}y^j&=0\\
\fpd{h_{ij}}{y^k}&=\fpd{h_{ik}}{y^j}\\
(\nabla h)_{ij}&=0\\
h_{ik}W^k_j&=h_{jk}W^k_i.
\end{align*}
\noindent Here $\nabla$ stands for the dynamical covariant
derivative operator of any choice of spray $\Gamma$ in the
projective class. The action of this operator on tensors $h$ is
given by
$(\nabla h)_{ij}=\Gamma(h_{ij})-\Gamma_i^kh_{kj}-\Gamma_j^kh_{ik}$.
The functions $W^k_j$ are the components of the
(projectively-invariant) Weyl tensor. A result of \cite{mult}
states that the last condition can equivalently be replaced by
the condition $\oplus R^l_{jk}h_{il}=0$, where $\oplus$ stands
for a cyclic sum and where $R^l_{jk}$ are the curvature
components of the horizontal distribution (with $[H_i,H_j] =-
R^l_{ij} V_l$), or by the condition $h_{ik}R^k_j=h_{jk}R^k_i$,
where $R^k_j= R^k_{jl}y^l$ are the components of the Jacobi
endomorphism (Riemann curvature).

The conditions displayed above, though expressed in coordinate
form, are tensorial in nature. They play the same role in
relation to the projective Finsler metrizability problem as the
Helmholtz conditions do for the general inverse problem of the
calculus of variations; though it is not strictly accurate, for
ease of reference we shall call them the Helmholtz conditions in
this paper (in \cite{mult} we referred to them as Helmholtz-like
conditions).

A tensor $h_{ij}$ is said to be {\em positive quasi-definite} if
$h_{ij}(y)v^iv^j\geq0$, with equality only if $v$ is a scalar
multiple of $y$. We shall say that a multiplier $h$ is {\em
quasi-regular\/} if $h_{ij}(y)v^j=0$ if and only if $v^i=ky^i$
for some scalar $k$. We shall call a positively-homogeneous
function $F$ whose Hessian with respect to fibre coordinates is
quasi-regular a {\em pseudo-Finsler function}. We summarize the
relevant results from \cite{mult} in the following theorem (they
occur as Theorems 2, 3 and 4 in \cite{mult}).

\begin{thm}\label{globmult}
(1) Given a projective class of sprays over a manifold $M$, and any
contractible coordinate neighbourhood $U\subset M$, there is a
positively-homogeneous function $F$ on $T^\circ U$ such that every
spray in the class satisfies the Euler-Lagrange equations for
$F$ if and only if there are functions $h_{ij}$ on  $T^\circ U$ which satisfy the Helmholtz conditions.\\
(2) If $F$ is a (global) Finsler function on $\TMO$ then its Hessian
$h$ satisfies the Helmholtz conditions for the
sprays of its geodesic class, and is in addition positive
quasi-definite. Conversely, suppose given a
projective class of sprays on $\TMO$. If there is a
tensor field $h$ on $\TMO$ which satisfies the Helmholtz
conditions everywhere and is in addition positive
quasi-definite, and if $H^2(M)=0$, then the projective class is
the geodesic class of a global pseudo-Finsler function, and of a
local Finsler function over a neighbourhood of any point of $M$.\\
(3) The projective class of a reversible spray on $\TMO$
is the geodesic class of a globally-defined absolutely-homogeneous
Finsler function if and only if there is a tensor field
$h$ which satisfies the Helmholtz conditions everywhere and is
in addition positive quasi-definite.
\end{thm}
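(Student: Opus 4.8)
The plan is to hang everything on the correspondence $F\longleftrightarrow h_{ij}=\spd{F}{y^i}{y^j}$ between a positively-homogeneous function of degree one and its fibre-Hessian (the angular metric), and on the two-form $\Omega=h_{ij}\,\psi^i\wedge dx^j$ built from a multiplier, where $\psi^i=dy^i+\Gamma^i_j\,dx^j$ are the connection one-forms dual to the $V_i$. The key reduction I would establish first is that the five Helmholtz conditions are together equivalent to the single geometric statement that \emph{$\Omega$ is closed and has $\D$ in its kernel}: the conditions $h_{ij}=h_{ji}$ and $h_{ij}y^j=0$ make $\Omega$ annihilate both $\Delta$ and any spray $\Gamma=y^iH_i$ of the class, so that $\D\subset\ker\Omega$, while the three remaining conditions are exactly the three frame-components of $d\Omega=0$ — the total symmetry of $\partial h_{ij}/\partial y^k$ is the purely vertical part, $(\nabla h)_{ij}=0$ is the mixed horizontal--vertical part, and the Weyl condition $h_{ik}W^k_j=h_{jk}W^k_i$ is the purely horizontal part. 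Verifying this equivalence, and in particular pinning down the curvature bookkeeping that identifies the horizontal component of $d\Omega$ with the Weyl condition, is where I expect the main effort to lie.

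Granting that reduction, part (1) falls out quickly. For necessity I would take the Hilbert one-form $\theta=\fpd{F}{y^i}\,dx^i$ of the given $F$; then $h$ is its Hessian, conditions 1--3 are immediate from homogeneity (Euler's theorem gives $h_{ij}y^j=0$, and $h_{ij}$ is a Hessian), and $\Omega=d\theta$ is automatically closed with $\D\subset\ker d\theta$ because $\Gamma$ metrizes $F$ (the Euler--Lagrange equation is precisely $i_\Gamma d\theta=0$), whence conditions 4--5 follow. For sufficiency I would run this backwards: given $h$ satisfying the Helmholtz conditions on the contractible neighbourhood $U$, the reduction gives a closed $\Omega$ with $\D\subset\ker\Omega$, and I integrate $\Omega=d\theta$ with $\theta=\theta_i\,dx^i$ semi-basic and homogeneous. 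This is possible because $U$ is contractible and, crucially, because $\dim M\geq 3$ makes the fibre $\R^n\setminus\{0\}\simeq S^{n-1}$ have vanishing first cohomology, so that the fibre integration of $h_{ij}\,dy^j$ goes through. Setting $F=\theta_i y^i$ then produces a positively-homogeneous function of degree one with $\fpd{F}{y^i}=\theta_i$ (using $h_{ij}y^j=0$) and Hessian $h$, and $\D\subset\ker d\theta$ is exactly the statement that every spray of the class satisfies the Euler--Lagrange equations of $F$.

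For part (2) the new ingredients are convexity on the one hand and globalisation on the other. The identity $\spd{F^2}{y^i}{y^j}=2\bigl(F h_{ij}+\fpd{F}{y^i}\fpd{F}{y^j}\bigr)$ shows that strong convexity of a genuine Finsler function is equivalent to positive quasi-definiteness of its angular metric $h$ together with $F>0$, which settles the necessity half and explains why positive quasi-definiteness is the right extra hypothesis. For the converse I would glue the local functions $F_a$ of part (1): on overlaps two of them share the Hessian $h$, so $F_a-F_b$ is linear in $y$ and equals $\langle\lambda_{ab},y\rangle$ for a closed one-form $\lambda_{ab}$ on $M$, and these form a \v{C}ech cocycle whose class is the obstruction to a global choice. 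A short sheaf argument identifies this obstruction with a class in $H^2(M;\R)$, so the hypothesis $H^2(M)=0$ lets me adjust the $F_a$ to a single global positively-homogeneous $F$ with Hessian $h$ — a pseudo-Finsler function, since $h$ is only quasi-regular and $F$ need not be positive everywhere — while positive quasi-definiteness makes $\spd{F^2}{y^i}{y^j}$ positive definite and $F$ positive on a neighbourhood of any point, yielding a genuine local Finsler function there.

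Part (3) I would obtain from the same reconstruction by imposing absolute homogeneity. Reversibility makes the projective class invariant under $y\mapsto-y$, so the multiplier and the reconstructed $F$ may be taken even in $y$; the point is that absolute homogeneity removes all reconstruction freedom, because the only ambiguity $F\mapsto F+\langle\lambda,y\rangle$ is odd in $y$ and therefore forced to vanish. Hence $F$ is uniquely determined by $h$, the local pieces agree automatically on overlaps, and a global absolutely-homogeneous $F$ exists with no cohomological hypothesis needed; positive quasi-definiteness again upgrades it to a genuine, and now globally defined, Finsler function.
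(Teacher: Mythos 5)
Your plan reconstructs, in essentials, machinery that this paper itself develops in Sections 4 and 5 — note that the paper does not prove the theorem at all, but quotes it from \cite{mult} (Theorems 2--4 there). Your ``key reduction'' is the content of Theorem~\ref{bmmult}, Theorem~\ref{old} and Corollary~\ref{oldclosed}; your fibrewise-then-basic integration of $\Omega=d\theta$ is exactly the Lemma of Section 5 (including the correct appeal to $H^1(S^{n-1})=0$ for $\dim M\ge 3$); and your \v{C}ech obstruction argument is the global theorem of that section. Two imprecisions in the reduction itself: closedness of $\Omega$ together with $\D\subset\ker\Omega$ does \emph{not} by itself recover the symmetry $h_{ij}=h_{ji}$ (in Theorem~\ref{old} symmetry is extracted from the $\phi^i\wedge\phi^j$ term of $\lie{\Gamma}\omega=0$, not from $d\omega=0$), and the mixed component of $d\Omega=0$ is equivalent to $(\nabla h)_{ij}=0$ only modulo the algebraic and vertical conditions. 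These are repairable, since in every application you make, $h$ is symmetric from the outset.

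The genuine gap is in the converse half of part (2), at the passage from the glued global pseudo-Finsler function to a \emph{local Finsler} function. You claim that positive quasi-definiteness makes ``$F$ positive on a neighbourhood of any point''. That is false: the glued $F$ is fixed only up to total-derivative terms $\lambda_i(x)y^i$, and nothing in your construction makes it positive near a given point. The paper's own example makes this vivid: $F=\sqrt{u^2+v^2+w^2}+\onehalf yu-\onehalf xv$ is a globally defined pseudo-Finsler function whose Hessian is everywhere positive quasi-definite, yet $F$ fails to be positive wherever $x^2+y^2\ge 4$, and near such points one must replace $F$ by $\tilde{F}$, which differs from it by a total derivative. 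So the real content of this step is an existence statement: given positive quasi-definiteness, one can always add a suitable closed $1$-form term so as to make $F$ positive on $T^\circ V$ for some neighbourhood $V$ of any prescribed point. Proving this requires a convexity (supporting-hyperplane/dual-body) argument on the fibres — it is the analysis behind Theorem 1 of \cite{mult}, whose delicacy the paper explicitly flags at the end of Section~\ref{BMthm} — and it is entirely absent from your proposal. A milder instance of the same omission occurs in part (3): there evenness does ultimately force positivity, but only via the fact that a positively-homogeneous function with positive quasi-definite Hessian is convex on the non-convex domain $\TMO$, essentially the result of Lovas \cite{Lovas} that the paper invokes; you assert the upgrade to a genuine Finsler function without any such argument.
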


\section{The theorem of Bucataru and Muzsnay}\label{BMthm}

The following theorem appears in \cite{BM}.

\begin{thm}[Bucataru and Muzsnay]\label{bm}
A spray $\Gamma$ is projectively metrizable if and only if there
exists a semi-basic 1-form $\theta$ on $\TMO$ such that
\[
\rk(d\theta)=2n-2,\quad i_\Gamma\theta>0,
\]
\[
\lie{\Delta}\theta=0,\quad d_J\theta=0, \quad d_H\theta=0.
\]
\end{thm}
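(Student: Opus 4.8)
The plan is to prove the two implications separately, in both cases using the Hilbert 1-form $\theta=(\partial F/\partial y^i)\,dx^i$ as the bridge between $F$ and the conditions on $\theta$. The computational engine throughout is the expansion of $d\theta$ in the coframe $(dx^i,\psi^i)$ adapted to $\Gamma$, where $\psi^i=dy^i+\Gamma^i_j\,dx^j$ is dual to the $V_i$ and annihilates the $H_i$, and where $J$ and $H$ denote the vertical endomorphism and horizontal projector of $\Gamma$. For a semi-basic form $\theta=\theta_i\,dx^i$ one finds
\[
d\theta=H_j(\theta_i)\,dx^j\wedge dx^i+\fpd{\theta_i}{y^j}\,\psi^j\wedge dx^i,
\]
with $H_j(\theta_i)=\partial\theta_i/\partial x^j-\Gamma^k_j\,\partial\theta_i/\partial y^k$. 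First I would translate each hypothesis into components: $\lie{\Delta}\theta=0$ says each $\theta_i$ is homogeneous of degree $0$; since $i_J\theta=0$ for a semi-basic form, $d_J\theta=i_J\,d\theta$ reduces to the symmetry $\partial\theta_i/\partial y^j=\partial\theta_j/\partial y^i$; and $d_H\theta=0$ is the symmetry $H_i(\theta_j)=H_j(\theta_i)$, that is, the vanishing of the first (horizontal--horizontal) term above. Finally $i_\Gamma\theta=y^i\theta_i$, because $\Gamma=y^iH_i$.

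For the forward implication, suppose $\Gamma$ is projectively metrizable and let $F$ be a Finsler function whose geodesic class contains $\Gamma$; take $\theta$ to be its Hilbert form. Semi-basicity is immediate; $i_\Gamma\theta=y^i\,\partial F/\partial y^i=F>0$ by Euler's theorem; $\lie{\Delta}\theta=0$ and $d_J\theta=0$ follow from the degree-$0$ homogeneity and the symmetry of the fibre Hessian $\partial\theta_i/\partial y^j=\partial^2F/\partial y^i\partial y^j$; and $\rk(d\theta)=2n-2$ is exactly the statement that this Hessian has rank $n-1$, which is the (broad-sense) regularity of $F$. The only delicate point is $d_H\theta=0$. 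Here I would note that, since $d\theta$ does not depend on the choice of spray from the class whereas the coframe does, the horizontal--horizontal coefficient $H_i\theta_j-H_j\theta_i$ is actually spray-independent: under $\Gamma\mapsto\Gamma-2P\Delta$ the extra contributions to $H_j\theta_i$ are proportional to $\partial\theta_i/\partial y^j$ and cancel in the antisymmetrization by the symmetry of the Hessian. It then suffices to verify $d_H\theta=0$ for the canonical geodesic spray, for which $H_i(F)=0$; differentiating this along $V_j$ and using $[V_j,H_i]=-(\partial\Gamma^k_i/\partial y^j)V_k$ gives $H_i\theta_j=(\partial\Gamma^k_i/\partial y^j)\theta_k$, which is symmetric in $i,j$.

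Conversely, suppose a semi-basic $\theta$ satisfying the five conditions is given. I would set $F:=i_\Gamma\theta=y^i\theta_i$, which is positive by hypothesis and homogeneous of degree $1$ since the $\theta_i$ have degree $0$. Differentiating, and using first the symmetry $\partial\theta_j/\partial y^i=\partial\theta_i/\partial y^j$ and then $\Delta(\theta_i)=0$, shows $\partial F/\partial y^i=\theta_i$, so $\theta$ is genuinely the Hilbert form of $F$ and its fibre Hessian is $\partial\theta_i/\partial y^j$; the rank hypothesis then makes this Hessian quasi-regular, so $F$ is a (broad-sense, i.e.\ pseudo-)Finsler function. It remains to place $\Gamma$ in its geodesic class. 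Contracting the displayed expression for $d\theta$ with $\Delta$ and with $\Gamma$, the homogeneity condition kills $i_\Delta d\theta$ and, together with $d_J\theta=0$, annihilates the mixed term of $i_\Gamma d\theta$, while $d_H\theta=0$ annihilates the horizontal--horizontal term; hence $i_\Delta d\theta=i_\Gamma d\theta=0$. Thus $\Delta$ and $\Gamma$ span the two-dimensional kernel of $d\theta$ guaranteed by the rank condition, so $\Gamma$ lies in the characteristic distribution $\D$ of the Hilbert 2-form of $F$, which is precisely the geodesic class; therefore $\Gamma$ is projectively metrizable.

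I expect the main obstacle to be the careful bookkeeping that converts the coordinate-free operators $d_J$, $d_H$ and $\lie{\Delta}$ into the component identities above, together with the two contraction identities $i_\Delta d\theta=0$ and $i_\Gamma d\theta=0$, since these are what actually encode ``$\theta$ is a Hilbert form'' and ``$\Gamma$ is a geodesic spray''. Within this, the single genuinely non-formal step is the verification of $d_H\theta=0$ in the forward direction: one must either invoke the vanishing horizontal derivative $H_i(F)=0$ of the canonical geodesic spray, or argue the spray-independence of the horizontal--horizontal coefficient, both of which rest on the symmetry of the fibre Hessian. The equivalence of the rank condition with broad-sense regularity is then routine once the purely mixed normal form $d\theta=(\partial\theta_i/\partial y^j)\,\psi^j\wedge dx^i$ has been established.
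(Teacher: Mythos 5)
Your proposal is essentially correct, but it takes a genuinely different route from the paper. The paper never proves Theorem~\ref{bm} directly: it quotes it from \cite{BM}, and its own contribution is to show (Theorem~\ref{bmmult}) that the differential conditions $\lie{\Delta}\theta=0$, $d_J\theta=0$, $d_H\theta=0$ are equivalent to the Helmholtz conditions on the multiplier $h_{ij}=V_i(\theta_j)$, and then (the corollary in Section~\ref{BMthm}) that the rank condition is equivalent to quasi-regularity of $h$; the actual existence of $F$ is delegated to the multiplier theorem of \cite{mult}, quoted here as Theorem~\ref{globmult}. You instead argue directly: in the converse direction you set $F=i_\Gamma\theta$, check $\theta_i=\partial F/\partial y^i$ using homogeneity and the Hessian symmetry, and verify $i_\Gamma d\theta=i_\Delta d\theta=0$, so that $\Gamma$ (being second order) satisfies the Euler--Lagrange equations of $F$; in the forward direction you take the Hilbert form of $F$ and transfer $d_H\theta=0$ from the canonical spray, where $H_i(F)=0$, to the given $\Gamma$ via the projective invariance of $H_i(\theta_j)-H_j(\theta_i)$. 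That two-step treatment of $d_H\theta=0$ is correct (note that the invariance computation also needs $\Delta(\theta_i)=0$ to kill the term $V_j(P)\Delta(\theta_i)$, which you glossed over), and it buys a self-contained proof independent of the machinery of \cite{mult}; the paper's route buys the explicit bridge between the 1-form conditions and the Helmholtz conditions, which is what the rest of the paper actually uses.

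One point needs repair. In the converse direction you produce only a positive pseudo-Finsler function (quasi-regular fibre Hessian), yet you conclude that $\Gamma$ is projectively metrizable, which in this paper means metrizable by a genuine Finsler function, i.e.\ one whose energy Hessian $g_{ij}$ is positive definite. The missing step is exactly the one the paper flags in the discussion following its corollary: a pseudo-Finsler function which takes only positive values is in fact a Finsler function, a non-trivial result due to Lovas \cite{Lovas} and quoted in \cite{BM}. Without invoking this (or otherwise upgrading quasi-regularity plus positivity to strong convexity), your converse establishes metrizability only in the broad, pseudo-Finsler sense, which is weaker than the statement of the theorem.
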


We have modified the notation to fit ours. Here $J$ is the tangent
structure and $H$ the horizontal projector, both type $(1,1)$ tensor
fields on $\TMO$:
\[
J=V_i\otimes dx^i,\quad H=H_i\otimes dx^i.
\]
The conditions $d_J\theta=0$ and $d_H\theta=0$ amount to
\[
d\theta(\vlift{X},\hlift{Y})+d\theta(\hlift{X},\vlift{Y})=0,\quad
d\theta(\hlift{X},\hlift{Y})=0
\]
respectively, where $X$ and $Y$ are any vector fields on $M$; or in
terms of the basis fields,
\[
V_i(\theta_j)=V_j(\theta_i),\quad H_i(\theta_j)=H_j(\theta_i),
\quad\mbox{where } \theta=\theta_i dx^i.
\]

We call the conditions in the first line of the theorem the algebraic
conditions, those in the second line the differential
conditions, on $\theta$. We show first that the differential
conditions are equivalent to the Helmholtz conditions.

\begin{thm}\label{bmmult}
Suppose that, for a given spray $\Gamma$, there is a semi-basic 1-form
$\theta$ satisfying the differential conditions of Theorem~\ref{bm}.
Then $h_{ij}=V_i(\theta_j)$ satisfies the Helmholtz conditions.
Conversely, suppose that the tensor $h_{ij}$
satisfies  the Helmholtz conditions.  Then there is a
semi-basic 1-form $\theta$ which satisfies the differential conditions
of Theorem~\ref{bm}, and $h_{ij}=V_i(\theta_j)$.
\end{thm}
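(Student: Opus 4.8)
The plan is to treat the two implications separately, writing $h_{ij}=V_i(\theta_j)$ throughout. For the forward direction I would verify the five Helmholtz conditions one at a time, matching each to one of the differential conditions of Theorem~\ref{bm}; for the converse I would build $\theta$ by fibrewise integration and then adjust it by a basic $1$-form to secure the one condition that is not automatic.

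First, the forward direction. The condition $d_J\theta=0$ reads $V_i(\theta_j)=V_j(\theta_i)$, which is exactly the symmetry $h_{ij}=h_{ji}$; differentiating it by $y^k$ and using $V_jV_k=V_kV_j$ gives $V_k(h_{ij})=V_j(h_{ik})$, the third Helmholtz condition. Since $\theta$ is semi-basic one has $\lie{\Delta}\theta=\Delta(\theta_i)\,dx^i$, so $\lie{\Delta}\theta=0$ forces $\Delta(\theta_i)=y^kh_{ki}=0$, which with the symmetry already proved is $h_{ij}y^j=0$. The interesting condition is $(\nabla h)_{ij}=0$. Here I would use $\Gamma=y^iH_i$, the bracket $[\Gamma,V_i]=-H_i+\Gamma^j_iV_j$, and the fact that $V_i(i_\Gamma\theta)=\theta_i$ (a consequence of $h_{ij}y^j=0$), to establish the identity
\[
(\nabla h)_{ij}=H_j(\theta_i)-H_i(\theta_j),
\]
whose right-hand side vanishes precisely because $d_H\theta=0$. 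Finally, once $d_H\theta=0$ is available one computes in the adapted coframe that $d\theta=h_{ij}\,\delta y^i\wedge dx^j$; the $dx\wedge dx\wedge dx$ component of the identity $d(d\theta)=0$ then yields the cyclic curvature condition $\oplus R^l_{jk}h_{il}=0$, which the excerpt records as equivalent to the fifth Helmholtz condition.

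For the converse, suppose $h$ satisfies the Helmholtz conditions. Combining the symmetry of $h$ with $V_k(h_{ij})=V_j(h_{ik})$ gives $V_k(h_{ij})=V_i(h_{kj})$, so for each fixed $x$ and each index $j$ the $1$-form $h_{ij}\,dy^i$ on the fibre $\TxMO$ is closed. Because $\dim M\ge3$ the fibres are simply connected, so this form is exact and may be integrated to produce functions $\theta_j$ with $V_i(\theta_j)=h_{ij}$, smooth in $(x,y)$ and determined up to the replacement $\theta_j\mapsto\theta_j+c_j(x)$. The resulting semi-basic $1$-form automatically satisfies $d_J\theta=0$ (by construction and the symmetry of $h$) and $\lie{\Delta}\theta=0$ (because $\Delta(\theta_j)=y^ih_{ij}=0$). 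It remains to arrange $d_H\theta=0$; writing $B_{ij}=H_i(\theta_j)-H_j(\theta_i)$ and commuting $V_k$ past $H_i$ one finds $V_k(B_{ij})=h_{kj|i}-h_{ki|j}$, where $h_{ij|k}=H_k(h_{ij})-\Gamma^m_{ki}h_{mj}-\Gamma^m_{kj}h_{im}$ is the horizontal covariant derivative built from the Berwald coefficients $\Gamma^m_{ki}=V_i(\Gamma^m_k)$.

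The main obstacle is therefore to show that the Helmholtz conditions force the symmetry $h_{kj|i}=h_{ki|j}$, so that $B_{ij}$ is basic. The condition $(\nabla h)_{ij}=0$ only supplies the trace $y^ih_{kj|i}=0$; to upgrade this to full symmetry in the lower pair I expect to differentiate $(\nabla h)_{ij}=0$ vertically, commute $V$ and $H$ (which introduces the Berwald curvature), and then invoke the curvature condition $\oplus R^l_{jk}h_{il}=0$ to cancel the curvature terms. This is the technical heart of the argument, and is exactly the computation that makes $h_{ij}\,\delta y^i\wedge dx^j$ a closed $2$-form, tying the statement to the inverse-problem formulation of \cite{MCold}. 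Once $B$ is known to be basic, a further appeal to $d(d\theta)=0$ together with the curvature condition shows that $B$ is a closed $2$-form on $M$; working over a contractible neighbourhood (or globally when $H^2(M)=0$) we may write $B=-dc$ for a $1$-form $c=c_i\,dx^i$ on $M$, and replacing $\theta_i$ by $\theta_i+c_i$ kills $B$ while leaving the other two conditions and the identity $h_{ij}=V_i(\theta_j)$ intact, completing the construction.
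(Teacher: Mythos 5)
Your outline reproduces the paper's architecture (forward direction condition by condition; converse by fibrewise integration followed by subtraction of a basic potential), and most individual steps are sound, but two of them do not hold up. The pivot of your forward direction is the claimed identity $(\nabla h)_{ij}=H_j(\theta_i)-H_i(\theta_j)$, and this identity is provably false: once $d_J\theta=0$ is granted, $h$ is symmetric, so $(\nabla h)_{ij}$ is \emph{symmetric} in $i,j$, whereas the right-hand side is \emph{antisymmetric}; an identity equating them would force both to vanish identically, i.e.\ it would assert that $(\nabla h)=0$ for every $\theta$ with $d_J\theta=0$ and $\lie{\Delta}\theta=0$, irrespective of $d_H\theta$, which is false (take $\theta_i=V_i(L)$ for a positively $1$-homogeneous $L$ and a spray that is not a geodesic spray of $L$). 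Carrying out the computation you describe actually yields
\[
(\nabla h)_{ij}=y^kV_i\bigl(H_k(\theta_j)-H_j(\theta_k)\bigr),
\]
so the desired conclusion that $d_H\theta=0$ implies $(\nabla h)_{ij}=0$ survives, but not via your identity. The paper avoids the problem by using $H_i(\theta_j)=H_j(\theta_i)$ \emph{inside} the computation from the start, writing $\Gamma(\theta_i)=y^kH_i(\theta_k)=H_i(\theta_ky^k)+\Gamma^k_i\theta_k$ and then applying $V_j$.

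In the converse you correctly isolate the key step --- that $B_{ij}=H_i(\theta_j)-H_j(\theta_i)$ is basic --- but you leave exactly this ``technical heart'' undone, and your forecast of how it goes is wrong: no appeal to $\oplus R^l_{jk}h_{il}=0$ is needed, and no horizontal curvature terms ever appear. Differentiating $(\nabla h)_{ij}=0$ by $V_k$, using $[V_k,\Gamma]=H_k-\Gamma^l_kV_l$ and $[V_k,H_i]=-\Gamma^l_{ik}V_l$, and exploiting the total symmetry of $C_{ijk}=V_k(h_{ij})$ (third Helmholtz condition plus symmetry of $h$), one finds that your covariant derivative satisfies $h_{ij|k}=-(\nabla C)_{ijk}$, whose right-hand side is totally symmetric in $i,j,k$; hence $h_{kj|i}=h_{ki|j}$ and $V_k(B_{ij})=0$, using only $(\nabla h)=0$ and the vertical symmetry --- which is precisely what the paper asserts. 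The curvature condition is needed only in the subsequent step, to show the basic $2$-form $B$ is closed, where you do invoke it correctly. So your plan is executable after repairs, but as it stands the one computation that makes the converse work is asserted rather than proved, and with the wrong ingredients identified.
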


\begin{proof}
Suppose that the semi-basic 1-form $\theta$ satisfies the differential
conditions of Theorem~\ref{bm}.  Set $h_{ij}=V_i(\theta_j)$.  This is
a tensor field along $\tau$ of the indicated type.  Since
$V_i(\theta_j)=V_j(\theta_i)$, as we pointed out above, $h_{ij}$ is
symmetric.  Moreover
$V_k(h_{ij})=V_kV_j(\theta_i)=V_jV_k(\theta_i)=V_j(h_{ik})$, since
$V_j$ and $V_k$ commute.  Furthermore
$\lie{\Delta}\theta=y^jV_j(\theta_i)dx^i=h_{ij}y^jdx^i=0$.  Now
$H_i(\theta_j)=H_j(\theta_i)$, from which it follows that
\[
\Gamma(\theta_i)=H_i(\theta_k)y^k=H_i(\theta_ky^k)+\Gamma^k_i\theta_k.
\]
Now apply $V_j$  and use $[V_j,\Gamma]=H_j-\Gamma_j^kV_k$ to obtain
\[
V_j\Gamma(\theta_i)=\Gamma(h_{ij})+H_j(\theta_i)-\Gamma^k_jh_{ik}
=V_jH_i(\theta_ky^k)+\conn kij\theta_k+\Gamma^k_ih_{jk},
\]
where $\Gamma^k_{ij} = V_j(\Gamma^k_i)$. But $[V_j,H_i]=-\conn kijV_k$, and
$V_j(\theta_ky^k)=h_{jk}y^k+\theta_j=\theta_j$. Thus
$V_jH_i(\theta_ky^k)+\conn ijk\theta_k=H_i(\theta_j)$, and so
\[
\Gamma(h_{ij})-\Gamma^k_jh_{ik}-\Gamma^k_ih_{jk}
=(\nabla h)_{ij}=0.
\]
Finally, note that $[H_j,H_k](\theta_i)=-R_{jk}^lh_{il}$:\ but then
\[
\oplus [H_j,H_k](\theta_i)=-\oplus R_{jk}^lh_{il}=0
\]
in virtue of the fact that $H_i(\theta_j)=H_j(\theta_i)$. But as
we remarked above, the vanishing of $\oplus
R_{jk}^lh_{il}$ is equivalent to $h_{ik}W^k_j=h_{jk}W^k_i$. Thus
$h_{ij}$ satisfies the Helmholtz conditions.

Conversely, suppose that $h_{ij}$ satisfies the Helmholtz conditions.
Since $V_k(h_{ij})=V_j(h_{ik})$ there are
locally-defined functions $\bar{\theta}_i$, determined up to the
addition of arbitrary functions of the $x^i$ alone, such that
$h_{ij}=V_j(\bar{\theta}_i)$; and
$V_j(\bar{\theta}_i)=V_i(\bar{\theta}_j)$. We next show that for
any choice of the $\bar{\theta}_i$, the functions
$H_i(\bar{\theta}_j)-H_j(\bar{\theta}_i)$ are independent of the
$y^k$. Now
\[
\vf{y^k}\big(H_i(\bar{\theta}_j)\big)=H_i(h_{jk})-\conn lik h_{jl}.
\]
It is a simple and well-known consequence of the assumptions
that $(\nabla h)_{ij}=0$ and $V_k(h_{ij})=V_j(h_{ik})$ that
\[
H_i(h_{jk})-\conn likh_{jl}=
H_j(h_{ik})-\conn ljkh_{il},
\]
whence $H_i(\bar{\theta}_j)-H_j(\bar{\theta}_i)$ is independent of the
$y^k$. Thus
\[
\chi=\big(H_i(\bar{\theta}_j)-H_j(\bar{\theta}_i)\big)dx^i\wedge dx^j
\]
is a basic 2-form. We show that $\chi$ is closed. In
computing $d\chi$ we may replace the partial derivative with respect to
$x^k$ with $H_k$. We have
$\oplus H_k\big(H_i(\bar{\theta}_j)-H_j(\bar{\theta}_i)\big)
=\oplus[H_j,H_k](\bar{\theta}_i)=-\oplus R^l_{jk}h_{il}$.
But this vanishes if $h_{ik}W^k_j=h_{jk}W^k_i$. So $\chi$ is
closed, and hence (locally) exact. If now $\chi=d\psi$ with
$\psi=\psi_idx^i$, and $\theta_i=\bar{\theta}_i-\psi_i$, then
\[
\big(H_i(\theta_j)-H_j(\theta_i)\big)dx^i\wedge dx^j=\chi-d\psi=0.
\]
Set $\theta=\theta_i dx^i$.  Then $V_i(\theta_j)=h_{ij}$,
$d_J\theta=0$ and $d_H\theta=0$; moreover
$\lie{\Delta}\theta=h_{ij}y^j dx^i=0$; so $\theta$ satisfies the
differential conditions of Theorem~\ref{bm}.
\end{proof}

The condition on the rank of $d\theta$ gives the following corollary.

\begin{cor}
The projective class of sprays containing $\Gamma$
is the geodesic class of a pseudo-Finsler function if and only
if there is a semi-basic 1-form $\theta$ on $\TMO$ which
satisfies the differential conditions of Theorem~\ref{bm}, and
in addition $\rk(d\theta)=2n-2$.
\end{cor}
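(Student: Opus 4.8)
The plan is to reduce everything to the multiplier $h_{ij}=V_i(\theta_j)$ already analysed in Theorem~\ref{bmmult}, the only genuinely new ingredient being the interpretation of the rank condition. First I would record how $\rk(d\theta)$ is controlled by $h$. Working in the adapted frame $\{H_i,V_i\}$ and using the differential conditions, one finds $d\theta(V_i,V_j)=0$ (since $\theta$ is semi-basic), $d\theta(H_i,H_j)=0$ (this is $d_H\theta=0$), and $d\theta(H_i,V_j)=-V_j(\theta_i)=-h_{ij}$. Hence in this frame the matrix of $d\theta$ is block-antidiagonal with blocks $\pm h$, so $\rk(d\theta)=2\,\rk(h)$. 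Because the Helmholtz conditions force $h_{ij}y^j=0$, we always have $\rk(h)\le n-1$, with equality exactly when $\ker h$ is spanned by $y$, that is, when $h$ is quasi-regular. Thus, for a $\theta$ satisfying the differential conditions, $\rk(d\theta)=2n-2$ if and only if $h=V_i(\theta_j)$ is quasi-regular.

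With this lemma in hand the forward implication goes as follows. Given $\theta$ satisfying the differential conditions with $\rk(d\theta)=2n-2$, Theorem~\ref{bmmult} tells us that $h_{ij}=V_i(\theta_j)$ satisfies the Helmholtz conditions, and the rank lemma tells us $h$ is quasi-regular. I would then produce the function directly as $F=i_\Gamma\theta=\theta_iy^i$; this is globally defined on $\TMO$, and since $\lie{\Delta}\theta=0$ makes $\theta_i$ homogeneous of degree zero, $F$ is positively homogeneous of degree one. Using $h_{ij}y^j=0$ one checks $V_i(F)=\theta_i$, so the Hessian of $F$ is exactly $h$, which is quasi-regular; hence $F$ is a pseudo-Finsler function. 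It remains to see that $\Gamma$, and so its whole projective class, is a geodesic spray of $F$. Writing $\Gamma=y^iH_i$ and contracting the components of $d\theta$ computed above gives $i_\Gamma d\theta=0$, the vertical part vanishing by $h_{ij}y^j=0$ and the horizontal part by $d_H\theta=0$; this is the Hilbert-form expression of the Euler--Lagrange equations for the degree-one Lagrangian $F$.

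For the converse I would not insist on recovering $\theta$ as the Hilbert form of $F$, but route through the multiplier. If the projective class is the geodesic class of a pseudo-Finsler function $F$, then its Hessian $h_{ij}=V_i V_j(F)$ satisfies the Helmholtz conditions for the class (the computation summarised in Theorem~\ref{globmult}(2) uses only homogeneity and the geodesic property, not positive-definiteness, and so applies to a pseudo-Finsler function) and is quasi-regular by the very definition of a pseudo-Finsler function. The converse half of Theorem~\ref{bmmult} then supplies a semi-basic $1$-form $\theta$ satisfying the differential conditions with $V_i(\theta_j)=h_{ij}$, and the rank lemma immediately gives $\rk(d\theta)=2n-2$.

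The step I expect to be the main obstacle is the claim in the forward direction that $i_\Gamma d\theta=0$ really characterises $\Gamma$ as a geodesic spray of $F$: because $F$ is only pseudo-Finsler its Hessian is degenerate, so $F$ is a singular Lagrangian and one must argue with some care — using that the energy $\Delta(F)-F$ vanishes for a degree-one Lagrangian, so that the dynamics is genuinely governed by the Hilbert $2$-form $d\theta$ rather than by a regular symplectic form — that the base integral curves are precisely the geodesic paths of the class. By contrast the rank computation is routine once the frame is set up, and the backward direction is a direct appeal to Theorems~\ref{globmult} and~\ref{bmmult}.
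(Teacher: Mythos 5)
Your proposal is correct, and its first paragraph is precisely the paper's entire explicit proof: the paper writes $d\theta=H_i(\theta_j)dx^i\wedge dx^j-V_i(\theta_j)dx^i\wedge\phi^j=-h_{ij}dx^i\wedge\phi^j$ in the coframe adapted to $\Gamma$, observes that $h_{ij}y^j=0$ gives $i_\Gamma d\theta=i_\Delta d\theta=0$, and concludes that $\rk(d\theta)=2n-2$ exactly when $h$ is quasi-regular; your frame-matrix computation is the same argument. Where you genuinely differ is in what comes next: the paper stops there, leaving the equivalence with pseudo-Finsler metrizability to be assembled from Theorem~\ref{bmmult}, Theorem~\ref{globmult}(1) and the identification (from \cite{mult}) of the multiplier with the Hessian of $F$, remarking only afterwards that $\theta$ is then the Hilbert 1-form with $i_\Gamma\theta=F$. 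You turn that remark into the forward proof: $F=i_\Gamma\theta$ is positively homogeneous of degree one, $V_i(F)=h_{ij}y^j+\theta_i=\theta_i$, so the Hessian of $F$ is the quasi-regular $h$, and $i_\Gamma d\theta=0$ is the Euler--Lagrange condition because the energy of a degree-one Lagrangian vanishes. This is sound, and the step you flag as the main obstacle is not one: you are merely verifying that the given $\Gamma$ satisfies the Euler--Lagrange equations (your identity $i_\Gamma d\theta=\big(\Gamma(V_iF)-\partial F/\partial x^i\big)dx^i$ requires no regularity of $F$), and your construction buys something the paper's implicit route does not, namely a globally defined pseudo-Finsler function from a global $\theta$, whereas Theorem~\ref{globmult}(1) is purely local. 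On the converse, two caveats. Your claim that Theorem~\ref{globmult}(2) extends to pseudo-Finsler functions is an appeal to the proofs of \cite{mult} rather than to any statement quoted in this paper; it is in fact true --- for a geodesic spray of a 1-homogeneous $F$ one finds $H_i(\theta_j)-H_j(\theta_i)=-(\nabla h)_{ij}$, and a tensor that is both skew and symmetric vanishes --- and some such appeal is unavoidable, since the theorems as quoted never identify the multiplier with the Hessian of $F$. Second, the converse half of Theorem~\ref{bmmult} constructs $\theta$ only locally (its proof uses local potentials and local exactness), so strictly your converse yields a local 1-form rather than one on all of $\TMO$; taking $\theta$ to be the Hilbert form of $F$ itself, with $d_H\theta=0$ supplied by the identity just quoted, would close this, but the same looseness about globality is present in the paper's own statements, so it is not a defect peculiar to your argument.
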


\begin{proof}
Let $\{dx^i,\phi^i\}$ be the local basis of 1-forms dual to the
local basis of vector fields $\{H_i,V_i\}$ corresponding to the
horizontal distribution determined by $\Gamma$. Then
\[
d\theta=H_i(\theta_j)dx^i\wedge dx^j-V_i(\theta_j)dx^i\wedge \phi^j
=-h_{ij}dx^i\wedge\phi^j.
\]
It follows from the fact that $h_{ij}y^j=0$ that $i_\Gamma
d\theta=i_\Delta d\theta=0$; thus in general $\rk(d\theta)\leq 2n-2$,
and $\rk(d\theta)=2n-2$ if and only if $h_{ij}$ is quasi-regular.
\end{proof}

The condition $i_\Gamma\theta>0$ now comes into its own in
ensuring that the pseudo-Finsler function is actually a Finsler
function:\ $\theta$ (if it exists with the given properties) is
the Hilbert 1-form, and $i_\Gamma\theta=F$, so this condition,
together with the rank condition on $d\theta$, say that there is
a positive pseudo-Finsler function. But it can be shown that a
pseudo-Finsler function which takes only positive values is a
Finsler function, a result originally due to Lovas \cite{Lovas}
which is quoted in \cite{BM}.

It is worth remarking, with reference to the relation between
Theorem \ref{bm} and Theorem \ref{old} below, that if one adds
to $\theta$ the pull-back of any closed 1-form on $M$ then
$d\theta$ is unchanged; and this operation corresponds exactly
to adding a total derivative to $F$. So in a sense the
inequality condition in Theorem \ref{bm} requires that there
must be, among all of the pseudo-Finsler functions with a given
Hilbert 2-form, determined up to the addition of a total
derivative, one (at least) which is everywhere positive. The
result of the analysis leading to Theorem~1 in \cite{mult}
suggests however that to expect this positivity condition to
hold globally over $M$ is somewhat ambitious.

\section{Formulations in terms of 2-forms}\label{2form}

Let $\Gamma$ be a (semi-)spray and $\{dx^i,\phi^i\}$ the local
basis of 1-forms corresponding to its horizontal distribution. A
symmetric tensor $h=h_{ij}(y) dx^i\otimes dx^j$ can always
be lifted to a 2-form $\omega=h_{ij}(y) dx^i\wedge\phi^j$ on
$\TMO$. This procedure was called the K\"{a}hler lift of $h$ in
\cite{MCSII}, since $\omega$ is clearly a generalization of the
K\"{a}hler form of a Riemannian metric.

Recall that for a given projective class of sprays we denote by $\D$ the
distribution on $\TMO$ spanned by $\Delta$ and any spray of the class;
it is involutive.

\begin{lem}
Suppose given a projective class of sprays, and a symmetric
tensor $h_{ij}$ such that $h_{ij}y^j=0$. Let $\Gamma$ be any
spray of the class, and $\omega=h_{ij}dx^i\wedge\phi^j$ the
corresponding K\"{a}hler lift of $h$. Then $\omega$ is a
concomitant of the class, that is, it is the same whichever
spray in the class is used to define it. Moreover, the
characteristic distribution of any such 2-form $\omega$ contains
the distribution $\D$ defined by the class.
\end{lem}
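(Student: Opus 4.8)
The plan is to reduce both claims to two short coordinate computations, once I have recorded how the fundamental $1$-form $\phi^i$ transforms when the spray is replaced by another member of the projective class. Since $\{H_i,V_i\}$ is dual to $\{dx^i,\phi^i\}$ and $H_i=\vf{x^i}-\Gamma^j_i\vf{y^j}$, one checks at once that $\phi^i=dy^i+\Gamma^i_j\,dx^j$. A second member of the class has the form $\bar\Gamma=\Gamma-2P\Delta$ with $\Delta(P)=P$; its spray coefficients are $\bar\Gamma^i=\Gamma^i+Py^i$, and differentiating with respect to $y^j$ gives $\bar\Gamma^i_j=\Gamma^i_j+P\delta^i_j+y^iV_j(P)$. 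Hence the new fundamental $1$-forms are $\bar\phi^i=\phi^i+P\,dx^i+y^iV_j(P)\,dx^j$. This transformation law is the only nontrivial input; everything else is algebra driven by the two hypotheses $h_{ij}=h_{ji}$ and $h_{ij}y^j=0$.

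For the first assertion I would form $\bar\omega=h_{ij}\,dx^i\wedge\bar\phi^j$ and substitute the transformation law. Beyond $\omega$ itself this produces two extra terms: $Ph_{ij}\,dx^i\wedge dx^j$, which vanishes because $h_{ij}$ is symmetric while $dx^i\wedge dx^j$ is skew, and $h_{ij}y^jV_k(P)\,dx^i\wedge dx^k$, which vanishes because $h_{ij}y^j=0$. Thus $\bar\omega=\omega$, so $\omega$ does not depend on the choice of spray in the class; this is the concomitance claim.

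For the second assertion it suffices to show that $\Delta$ and any spray $\Gamma$ of the class lie in the characteristic distribution, i.e.\ that $i_\Delta\omega=0$ and $i_\Gamma\omega=0$, since $\D$ is spanned by these two fields. Using $\phi^j(\Delta)=y^j$ and $dx^i(\Delta)=0$ I get $i_\Delta\omega=-h_{ij}y^j\,dx^i=0$. For $\Gamma$ I would first record the identity $\Gamma=y^iH_i$, a consequence of Euler's theorem applied to the degree-two coefficients $\Gamma^i$ (so that $y^i\Gamma^j_i=2\Gamma^j$), whence $dx^i(\Gamma)=y^i$ and $\phi^j(\Gamma)=0$; then $i_\Gamma\omega=h_{ij}y^i\,\phi^j=0$, where I use symmetry to turn $h_{ij}y^j=0$ into $h_{ij}y^i=0$. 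This gives that $\D$ is contained in the characteristic distribution of $\omega$.

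I do not expect any serious obstacle: the content of the lemma is precisely that the two hypotheses on $h$ cancel the two correction terms produced by a projective change of spray and simultaneously annihilate the two spanning fields of $\D$. The one step demanding care is the derivation of the transformation law for $\bar\phi^i$ (equivalently of $\bar\Gamma^i_j$), since a sign or a homogeneity slip there would propagate into both parts of the argument; that is the computation I would verify most carefully.
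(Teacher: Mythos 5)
Your proof is correct and takes essentially the same route as the paper's: the paper likewise records the transformation law $\tilde{\phi}^i=\phi^i+P\,dx^i+y^iV_j(P)\,dx^j$ under a projective change $\Gamma\mapsto\Gamma-2P\Delta$, observes that the correction terms die by the symmetry of $h_{ij}$ and by $h_{ij}y^j=0$, and then notes $i_\Gamma\omega=i_\Delta\omega=0$. You have simply filled in the algebra that the paper dismisses with ``readily follows'' and ``clearly''.
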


\begin{proof}
Any other member of the projective class is of the form
$\tilde{\Gamma}=\Gamma-2P\Delta$, where $P$ is a
positively-homogeneous function on $\TMO$. For the local basis
$\{dx^i,\tilde{\phi}^i\}$ corresponding to $\tilde{\Gamma}$ we have
\[
\tilde{\phi}^i=\phi^i+P dx^i+y^iV_j(P)dx^j,
\]
from which the first result readily follows.  Clearly
$i_\Gamma\omega=i_\Delta\omega=0$, as a consequence of the fact that
$h_{ij}y^j=0$.
\end{proof}

\begin{thm}\label{old}
Let $\Gamma$ be a spray, and let $\omega$ be a 2-form on $\TMO$ such
that
\begin{enumerate}
\item the characteristic distribution of $\omega$ contains $\D$, the
distribution spanned by the projective class of $\Gamma$;
\item $\lie{\Gamma}\omega=0$;
\item for any pair of vertical vector fields $V_1,V_2$,
$\omega(V_1,V_2)=0$;
\item for any horizontal vector field $H$ and any pair of vertical
vector fields $V_1,V_2$, $d\omega(H,V_1,V_2)=0$.
\end{enumerate}
Then over any coordinate neighbourhood $U\subset M$,
$\omega=h_{ij}dx^i\wedge \phi^j$ where $h_{ij}$ satisfies the
Helmholtz conditions. Conversely, if $h_{ij}$
satisfies the Helmholtz conditions then for any
spray $\Gamma$ in the projective class the 2-form
$\omega=h_{ij}dx^i\wedge\phi^j$ on $\tau^{-1}U$ has the
foregoing properties.
\end{thm}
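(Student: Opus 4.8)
The plan is to fix one spray $\Gamma$ of the class, work in its adapted coframe $\{dx^i,\phi^i\}$ dual to $\{H_i,V_i\}$, and write the most general 2-form as
\[
\omega=\tfrac12 A_{ij}\,dx^i\wedge dx^j+B_{ij}\,dx^i\wedge\phi^j+\tfrac12 C_{ij}\,\phi^i\wedge\phi^j,
\]
with $A_{ij}$ and $C_{ij}$ skew. The whole argument then reduces to translating each of the four hypotheses into relations among $A$, $B$ and $C$, recognising the outcome as the Helmholtz conditions; the converse is obtained by running the same identities backwards.

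First I would dispose of the purely algebraic input. Since $\omega(V_i,V_j)=C_{ij}$, property (3) gives $C_{ij}=0$ at once. With $\Gamma=y^iH_i$ and $\Delta=y^iV_i$ one finds $i_\Delta\omega=-B_{ij}y^j\,dx^i$ and $i_\Gamma\omega=A_{ij}y^i\,dx^j+B_{ij}y^i\,\phi^j$, so property (1) yields $B_{ij}y^j=0$ (the homogeneity condition $h_{ij}y^j=0$) together with $A_{ij}y^i=0$. Property (4) needs only the structure equation $d\phi^j=\tfrac12R^j_{kl}\,dx^k\wedge dx^l-\conn jkl\,dx^k\wedge\phi^l$: expanding $d\omega$ and feeding it one horizontal and two vertical arguments leaves a single surviving term, giving $d\omega(H_i,V_j,V_k)=V_k(B_{ij})-V_j(B_{ik})$, so (4) is exactly $V_k(B_{ij})=V_j(B_{ik})$.

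The heart of the proof is property (2), which I would handle by computing $\lie{\Gamma}\omega$ directly from $\lie{\Gamma}dx^i=\phi^i-\Gamma^i_j\,dx^j$ and $\lie{\Gamma}\phi^j=R^j_{kl}y^k\,dx^l-\Gamma^j_l\,\phi^l$, and sorting the result by form-type. The $\phi\wedge\phi$ part forces $B_{ij}$ to be symmetric, and the $dx\wedge\phi$ part gives the single clean identity $A_{ij}=-(\nabla B)_{ij}$. The decisive observation is that $A_{ij}$ is skew whereas $\nabla$ preserves the symmetry of the now-symmetric $B$, so $(\nabla B)_{ij}$ is symmetric; an antisymmetric tensor equal to minus a symmetric one must vanish, whence simultaneously $A_{ij}=0$ and $(\nabla B)_{ij}=0$. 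With $A$ gone, the $dx\wedge dx$ part reduces to $B_{ij}R^j_{kl}y^k\,dx^i\wedge dx^l=0$, that is $h_{ik}R^k_j=h_{jk}R^k_i$, which by the remark in Section~3 is equivalent to the Weyl condition $h_{ik}W^k_j=h_{jk}W^k_i$. This elimination of the skew $dx\wedge dx$ block is the step I expect to be the main obstacle, resting entirely on the interplay between the skewness of $A$ and the symmetry-preservation of the dynamical covariant derivative.

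Setting $h_{ij}=B_{ij}$ then collects all five Helmholtz conditions: symmetry and $h_{ij}y^j=0$ from the algebraic step, $V_k(h_{ij})=V_j(h_{ik})$ from (4), and $(\nabla h)_{ij}=0$ together with the Weyl identity from (2); while $A=C=0$ shows $\omega=h_{ij}\,dx^i\wedge\phi^j$. For the converse, given $h_{ij}$ satisfying the Helmholtz conditions I would set $\omega=h_{ij}\,dx^i\wedge\phi^j$; property (1) and the independence of the chosen spray are supplied by the preceding lemma, property (3) is immediate, and properties (4) and (2) follow by reading the three form-type identities above from right to left, using in turn the derivative-symmetry, the equation $\nabla h=0$ (with $A=0$), and the Jacobi-endomorphism form of the Weyl condition.
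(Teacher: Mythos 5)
Your proposal is correct and follows essentially the same route as the paper's proof: decompose $\omega$ in the coframe $\{dx^i,\phi^i\}$ adapted to $\Gamma$, kill the $\phi\wedge\phi$ block by assumption 3, sort $\lie{\Gamma}\omega$ by form-type to get symmetry of $h$, then $a_{ij}=(\nabla h)_{ij}=0$ from the skew-versus-symmetric clash, then the curvature condition $h_{ik}R^k_j=h_{jk}R^k_i$, with assumption 4 supplying $V_k(h_{ij})=V_j(h_{ik})$ and assumption 1 the homogeneity condition. The only differences (carrying an explicit $C_{ij}$ term, extracting $h_{ij}y^j=0$ at the algebraic stage rather than at the end, and spelling out the converse that the paper calls straightforward) are cosmetic.
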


Assumptions 3 and 4 may be stated as follows:\ for every $(x,y)\in
\TMO$ the vertical subspace of $T_{(x,y)}\TMO$ is isotropic for
$\omega$ and $i_Hd\omega$.

\begin{proof}
We may express $\omega$ in terms of the basis $\{dx^i,\phi^j\}$
defined by $\Gamma$.  It has no term in $\phi^i\wedge\phi^j$ because
of assumption~3.  Thus we may write
\[
\omega=a_{ij}dx^i\wedge dx^j+h_{ij}dx^i\wedge \phi^j
\]
where $a_{ji}=-a_{ij}$. A straightforward calculation yields
\[
\lie{\Gamma}\omega=
(\Gamma(a_{ij})-2a_{ik}\Gamma^k_j-h_{ik}R^k_j)dx^i\wedge dx^j
+((\nabla h)_{ij}+2a_{ij})dx^i\wedge\phi^j+h_{ij}\phi^i\wedge\phi^j.
\]
Since this must vanish, it follows (working from right to left)
that $h_{ij}$ is symmetric; that $(\nabla h)_{ij}=a_{ij}=0$
because one is symmetric, the other skew; and that $h_{ik}R^k_j$
is symmetric in $i$ and $j$. In particular,
$\omega=h_{ij}dx^i\wedge \phi^j$; it then follows from the first
assumption that $h_{ij}y^j=0$. Now
\[
d\omega=V_k(h_{ij})dx^i\wedge\phi^j\wedge\phi^k\pmod{dx^i\wedge
dx^j},
\]
so that
\[
d\omega(H_i,V_j,V_k)=V_k(h_{ij})-V_j(h_{ik})=0.
\]
Thus the coefficients $h_{ij}$ satisfy the Helmholtz conditions.

The converse is straightforward.
\end{proof}

\begin{cor}\label{oldclosed}
If a 2-form $\omega$ has the properties stated in Theorem~\ref{old}
then
\begin{enumerate}
\item $\omega(\hlift{X},\hlift{Y})=0$ and
$\omega(\hlift{X},\vlift{Y})=\omega(\hlift{Y},\vlift{X})$ for
any vector fields $X,Y$ on $M$;
\item $\omega$ is closed;
\item $\lie{Z}\omega=0$ for any vector field $Z$ in $\D$.
\end{enumerate}
\end{cor}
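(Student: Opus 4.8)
The plan is to dispose of the three claims in the order~(1), (2), (3): part~(1) merely records in frame-free language the local structure already found in Theorem~\ref{old}, part~(2) is the substantive computation, and part~(3) is then immediate. All three statements are local in nature, so it suffices to argue over a coordinate neighbourhood, where Theorem~\ref{old} gives $\omega=h_{ij}\,dx^i\wedge\phi^j$ with $\{dx^i,\phi^j\}$ the coframe dual to $\{H_i,V_j\}$. For~(1) I would simply pair with lifts: this yields $\omega(\hlift X,\hlift Y)=0$ at once (there is no $dx^i\wedge dx^j$ term) and $\omega(\hlift X,\vlift Y)=h_{ij}X^iY^j$, so the asserted symmetry $\omega(\hlift X,\vlift Y)=\omega(\hlift Y,\vlift X)$ is just $h_{ij}=h_{ji}$.

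For~(2) I would prove $d\omega=0$ by evaluating $d\omega$ on triples drawn from $\{H_i,V_j\}$, using the invariant formula $d\omega(X,Y,Z)=\sum_{\mathrm{cyc}}\big(X(\omega(Y,Z))-\omega([X,Y],Z)\big)$ together with the brackets $[H_i,H_j]=-R^l_{ij}V_l$, $[H_i,V_j]=\conn lij V_l$ and $[V_i,V_j]=0$. Two of the four component types cost nothing: $d\omega(V_i,V_j,V_k)=0$ because $\omega(V_i,V_j)=0$ and the verticals commute, and $d\omega(H_i,V_j,V_k)=V_k(h_{ij})-V_j(h_{ik})=0$ is precisely the identity already exhibited in the proof of Theorem~\ref{old}.

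The two remaining component types carry the real work. In $d\omega(H_i,H_j,H_k)$ the leading terms drop because $\omega(H_j,H_k)=0$, and only the curvature brackets survive, leaving a cyclic sum of terms $R^l_{ij}h_{kl}$; up to relabelling this is the quantity $\oplus R^l_{jk}h_{il}$, which vanishes by the Helmholtz curvature condition. In $d\omega(H_i,H_j,V_k)$ the curvature bracket contributes nothing (it is paired into $\omega(V_l,V_k)=0$), and after collecting the $H$-derivative and connection terms one is left with
\[
\big(H_i(h_{jk})-\conn likh_{jl}\big)-\big(H_j(h_{ik})-\conn ljkh_{il}\big),
\]
which is zero by the symmetry in $i,j$ of $H_i(h_{jk})-\conn likh_{jl}$ established during the proof of Theorem~\ref{bmmult} from $(\nabla h)_{ij}=0$ and $V_k(h_{ij})=V_j(h_{ik})$.

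I expect the only delicate point to be this last piece of bracket bookkeeping—keeping the connection and curvature contributions straight in the two components that involve horizontal fields—but it is routine once the two Helmholtz-derived identities (the cyclic curvature identity and the symmetry of $H_i(h_{jk})-\conn likh_{jl}$) are in hand. Part~(3) then needs no further calculation: by the first hypothesis of Theorem~\ref{old} the distribution $\D$ lies in the characteristic distribution of $\omega$, so $i_Z\omega=0$ for every $Z$ in $\D$, and since $\omega$ is closed by~(2), Cartan's formula gives $\lie Z\omega=i_Z\,d\omega+d\,i_Z\omega=0$.
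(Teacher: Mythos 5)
Your proposal is correct and follows essentially the same route as the paper: part (1) from the explicit form $\omega=h_{ij}\,dx^i\wedge\phi^j$, part (2) by reducing $d\omega=0$ to the cyclic curvature identity $\oplus R^l_{jk}h_{il}=0$ and the symmetry in $i,j$ of $H_i(h_{jk})-\Gamma^l_{ik}h_{jl}$ from the proof of Theorem~\ref{bmmult}, and part (3) by Cartan's formula. The only difference is presentational --- you evaluate $d\omega$ on frame triples via the invariant formula where the paper writes $d\omega$ out as an explicit 3-form --- but the decomposition into component types and the identities invoked are the same.
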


\begin{proof}
1. These follow from the explicit form of $\omega$.\\[5pt]
2. A straightforward calculation gives
\[
d\omega=\onehalf h_{il}R^l_{jk} dx^i\wedge dx^j\wedge dx^k+
\big(H_i(h_{jk})+h_{il}\conn ljk\big) dx^i\wedge dx^j\wedge\phi^k.
\]
The first term vanishes because $\oplus h_{il}R^l_{jk}=0$, the second
because $H_i(h_{jk})+h_{il}\conn ljk$ is symmetric in $i$ and $j$, as
we established in the proof of Theorem~\ref{bmmult}.\\[5pt]
3. For any $Z\in\D$, $\lie{Z}\omega=d(i_Z\omega)+i_Zd\omega=0$.
\end{proof}

\begin{cor}\label{oldcor}
A projective class of sprays is the geodesic class of a
locally-defined pseudo-Finsler function (that is, one defined over a
coordinate neighbourhood $U\subset M$) if and only if there is
a 2-form $\omega$ on $\tau^{-1}U$ with the properties stated in
Theorem~\ref{old}, such that the characteristic distribution of
$\omega$ is precisely the distribution $\D$ spanned by $\Delta$
and any spray of the class.
\end{cor}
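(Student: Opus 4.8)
The plan is to reduce the statement to Theorem~\ref{old}, the lemma preceding it, the rank computation already carried out for the corollary to Theorem~\ref{bmmult}, and Theorem~\ref{globmult}(1). The whole content is that, under the correspondence $\omega\leftrightarrow h$, pseudo-Finsler metrizability matches quasi-regularity of the multiplier, and quasi-regularity is exactly what forces the characteristic distribution down to $\D$. So the corollary is essentially the transcription, from the 1-form $\theta$ to the 2-form $\omega$, of the earlier corollary to Theorem~\ref{bmmult}.

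First I would set up the dictionary. By Theorem~\ref{old}, a 2-form $\omega$ on $\tau^{-1}U$ with the four listed properties is necessarily of the form $\omega=h_{ij}\,dx^i\wedge\phi^j$ with $h_{ij}$ satisfying the Helmholtz conditions, and conversely every Helmholtz multiplier produces such an $\omega$; so it suffices to track the single extra requirement that the characteristic distribution be exactly $\D$. By the lemma preceding Theorem~\ref{old} the characteristic distribution of $\omega$ always contains $\D$, and $\D$ is two-dimensional, so it equals $\D$ precisely when $\rk(\omega)=2n-2$. Since $\omega$ here coincides up to sign with the $d\theta$ of the corollary to Theorem~\ref{bmmult}, the same linear-algebra argument given there shows that $\rk(\omega)=2n-2$ if and only if $h_{ij}$ is quasi-regular.

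For the forward implication I would start from a pseudo-Finsler function $F$ on $\tau^{-1}U$ whose geodesic class is the given projective class. By definition its Hessian $h_{ij}=\spd{F}{y^i}{y^j}$ is quasi-regular, and because $F$ metrizes the class this Hessian satisfies the Helmholtz conditions, these being consequences of the Euler-Lagrange equations together with homogeneity, exactly as in Theorem~\ref{globmult}. Lifting $h$ to $\omega=h_{ij}\,dx^i\wedge\phi^j$ then gives, by Theorem~\ref{old}, a 2-form with the four properties, and by the dictionary above its characteristic distribution is precisely $\D$.

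For the converse I would take a 2-form $\omega$ with the four properties and characteristic distribution $\D$, read off a Helmholtz multiplier $h$ from Theorem~\ref{old}, and use the dictionary to conclude that $h$ is quasi-regular. Taking $U$ contractible (any coordinate neighbourhood may be shrunk to one), Theorem~\ref{globmult}(1) then yields a positively-homogeneous $F$ on $\tau^{-1}U$ metrizing the class. The point that needs care, and which I expect to be the only real obstacle, is that this $F$ should have Hessian equal to the given $h$, so that quasi-regularity of $h$ is exactly the assertion that $F$ is pseudo-Finsler. This is secured by the construction underlying Theorem~\ref{globmult}: the symmetry condition $\fpd{h_{ij}}{y^k}=\fpd{h_{ik}}{y^j}$ is precisely the integrability condition that lets one recover a semi-basic $\theta$ with $V_i(\theta_j)=h_{ij}$, as in Theorem~\ref{bmmult}, and hence a function $F$ with $\spd{F}{y^i}{y^j}=h_{ij}$; so the multiplier one begins with is genuinely the Hessian of the function one ends with, and the conclusion follows.
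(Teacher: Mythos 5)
Your proposal is correct and follows what is essentially the paper's own (implicit) argument: the paper states this corollary without a separate proof precisely because it is the combination of the dictionary of Theorem~\ref{old} between such 2-forms and Helmholtz multipliers, the observation that the characteristic distribution (which always contains $\D$) equals $\D$ exactly when $h$ is quasi-regular --- the same rank computation as in the corollary to Theorem~\ref{bmmult} --- and the multiplier result, Theorem~\ref{globmult}(1). Your additional care in checking that the recovered function $F$ has Hessian equal to the given $h$ (via the integrability condition and $F=i_\Gamma\theta$) is exactly the point that makes quasi-regularity of $h$ translate into $F$ being pseudo-Finsler, and it is secured by the construction in Theorem~\ref{bmmult}, as you say.
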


We next describe how the positive quasi-definiteness condition
on $h$ may be specified as a condition on $\omega$. For any chosen
$\Gamma$ of the class, for $x\in M$ and $y\in\TxMO$ we define a
quadratic form $q_{(x,y)}$ on $T_xM$ by
$q_{(x,y)}(v)=\omega_{(x,y)}(\hlift{v},\vlift{v})$. Notice that
$q_{(x,y)}(y)=\omega_{(x,y)}(\Gamma,\Delta)=0$. This definition
may appear to depend on a choice of $\Gamma$ from the projective
class. The two-dimensional subspaces of $T_y\TMO$ of the form
$\langle\hlift{v},\vlift{v}\rangle$ are well-defined for a given
$\Gamma$, but change if $\Gamma$ is changed to a different
member of the projective class. But if we change $\Gamma$ to
$\Gamma-2P\Delta$ then $\hlift{v}$ changes to
$\hlift{v}-P\vlift{v}-\vlift{v}(P)\Delta_{(x,y)}$, and this
makes no difference to the value of
$\omega_{(x,y)}(\hlift{v},\vlift{v})$. So the quadratic form
$q_{(x,y)}$ is in fact a concomitant of the class.

\begin{cor}\label{oldfins}
If the quadratic form $q$ is positive quasi-definite everywhere
on $\TMO$ then in a neighbourhood of any point in $M$ there is a
local Finsler function of which the projective class
of sprays is the geodesic class.
\end{cor}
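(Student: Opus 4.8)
The plan is first to reduce to the pseudo-Finsler case already established, and then to upgrade the resulting pseudo-Finsler function to a genuine Finsler function by arranging positivity. To begin, I would note that, in terms of the basis dual to $\{H_i,V_i\}$, the quadratic form unwinds as $q_{(x,y)}(v)=\omega(\hlift v,\vlift v)=h_{ij}(x,y)v^iv^j$, where $h$ is the multiplier produced by Theorem~\ref{old}; thus positive quasi-definiteness of $q$ is precisely positive quasi-definiteness of $h$. In particular $h$ is quasi-regular, since $h_{ij}v^j=0$ gives $h_{ij}v^iv^j=0$ and hence $v\propto y$, while conversely $h_{ij}y^j=0$. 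A direct computation of the kernel of $\omega=h_{ij}dx^i\wedge\phi^j$ shows that its characteristic distribution coincides with $\D$ exactly when $h$ is quasi-regular, so Corollary~\ref{oldcor} applies and the projective class is the geodesic class of a locally-defined pseudo-Finsler function $F$ whose fibre Hessian is $h$.

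It then remains to make $F$ a genuine Finsler function on the whole fibre over a chosen point $x_0\in M$. Writing $F_i=\partial F/\partial y^i$, the fundamental tensor is $g_{ij}=\partial^2F^2/\partial y^i\partial y^j=2(F_iF_j+Fh_{ij})$, so once $F>0$ the positive quasi-definiteness of $h$ makes $g$ positive definite: if $g_{ij}v^iv^j=0$ then $Fh_{ij}v^iv^j=0$ forces $v=ky$, and then $(F_iv^i)^2=(kF)^2=0$ forces $k=0$. This is the fact, recalled earlier, that a positive pseudo-Finsler function is automatically a Finsler function. Hence the entire difficulty is concentrated in producing a positive representative $F$.

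I expect this positivity step to be the \emph{main obstacle}, since positive quasi-definiteness of $h$ does not by itself force $F>0$. The plan here uses two ingredients. First, $F$ is determined only up to a total derivative $\alpha_iy^i$ with $\alpha=\alpha_i\,dx^i$ closed, an operation that changes neither $h$ nor the geodesics; this is the pointwise counterpart of the freedom, noted before Theorem~\ref{old}, to add a closed basic $1$-form to the Hilbert form. Second, positive quasi-definiteness says that for each fixed $x$ the degree-one function $F_x$ has positive semi-definite fibre Hessian with radial kernel, so $F_x$ is convex and sublinear on $T_xM$ and is therefore the support function of a smooth strictly convex body $K_x$; moreover $F_x>0$ throughout $\TxMO$ exactly when the origin lies in the interior of $K_x$. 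Adding the total derivative translates $K_x$ by $\alpha(x)$, so it suffices to pick a constant covector $\alpha$ placing the origin interior to $K_{x_0}+\alpha$; then $F_{x_0}>0$, and by continuity of $K_x$ in $x$ this persists on $T^\circ U$ for a small enough neighbourhood $U$ of $x_0$. The delicate point to verify is precisely that positive quasi-definiteness, rather than mere quasi-regularity, is what guarantees each $K_x$ is a genuine strictly convex body that can be translated to contain the origin in its interior, and that a single constant translation achieves this uniformly over a neighbourhood of $x_0$.
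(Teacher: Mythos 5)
Your proposal is correct, but it takes a genuinely different route from the paper. The paper's own proof is essentially one line: the explicit form $\omega=h_{ij}\,dx^i\wedge\phi^j$ established in Theorem~\ref{old} gives $q_{(x,y)}(v)=h_{ij}(x,y)v^iv^j$, so positive quasi-definiteness of $q$ is exactly positive quasi-definiteness of the multiplier $h$, and the conclusion is then delegated to the multiplier results quoted in Section~3 (part (2) of Theorem~\ref{globmult}, applied over a contractible coordinate neighbourhood $U$, where $H^2(U)=0$ holds automatically); the hard work of producing a positive, hence genuinely Finsler, function locally is done in \cite{mult}. You perform the same identification of $q$ with $h$, but instead of citing \cite{mult} you rebuild its local content inside this paper's framework: quasi-regularity of $h$, Corollary~\ref{oldcor} to obtain a pseudo-Finsler $F$, the identity $g_{ij}=2(F_iF_j+Fh_{ij})$ (which is Lovas's result that a positive pseudo-Finsler function is a Finsler function), and the support-function argument that adding a total derivative $\alpha_iy^i$ with constant $\alpha_i$ translates the convex body $K_{x_0}$ so that the origin becomes interior. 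What your route buys is self-containedness and a clear view of where positive quasi-definiteness, as opposed to mere quasi-regularity, actually enters; what it costs is that the ``delicate points'' you flag must genuinely be proved, though all of them are true: (i) sublinearity of $F_x$ follows from positive semi-definiteness of the fibre Hessian plus homogeneity because the slit fibre is connected ($\dim M\geq 3$) --- convexity holds along segments avoiding the origin, and the antipodal inequality $F_x(v)+F_x(-v)\geq 0$ follows by a limiting argument; (ii) $K_x$ has nonempty interior, since otherwise $F_x$ minus a linear function would be invariant under translation by a fixed nonzero subspace, which would then lie in the kernel of $h$ at every $y$, contradicting quasi-regularity; (iii) a single constant $\alpha$ suffices on a whole neighbourhood of $x_0$ by compactness of the unit sphere in $T_{x_0}M$ and homogeneity. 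One further gap to close: you assert that the pseudo-Finsler function supplied by Corollary~\ref{oldcor} has fibre Hessian equal to $h$; that is not part of the corollary's statement, though it is true and is needed for your convexity and Lovas steps --- it follows by unwinding the construction, since Theorem~\ref{bmmult} yields $\theta$ with $V_i(\theta_j)=h_{ij}$, and then $F=i_\Gamma\theta=\theta_iy^i$ satisfies $V_jV_i(F)=h_{ij}$ because $h_{ij}y^j=0$.
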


\begin{proof}
This follows directly from the explicit form of $\omega$.
\end{proof}

Putting these local results together with Theorem \ref{globmult}
we obtain the following global theorem.

\begin{thm}
If $F$ is a (global) Finsler function on $\TMO$ then its Hilbert
2-form $\omega=d\theta$ satisfies the conditions of
Theorem~\ref{old} for the sprays of its geodesic class, and in
addition the corresponding quadratic form $q$ is positive
quasi-definite everywhere. Conversely, suppose given a
projective class of sprays on $\TMO$. If there is a 2-form
$\omega$ on $\TMO$ which everywhere satisfies the conditions of
Theorem~\ref{old} and whose corresponding quadratic form $q$ is
everywhere positive quasi-definite, and if $H^2(M)=0$, then the
projective class is the geodesic class of a global
pseudo-Finsler function, and of a local Finsler function over a
neighbourhood of any point of $M$.
\end{thm}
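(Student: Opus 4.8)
The plan is to read this global theorem as a dictionary between the 2-form language of Theorem~\ref{old} and the multiplier language of Theorem~\ref{globmult}, so that all of the substantive analytic work is deferred to the latter. The two implications are exact reverses of one another, and in each direction the only real task is to pass between the Hilbert 2-form $\omega=d\theta$ and the K\"{a}hler lift $h_{ij}\,dx^i\wedge\phi^j$ of a multiplier.

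For the forward implication I would begin with a global Finsler function $F$, its Hilbert 1-form $\theta$ with components $\theta_i=\partial F/\partial y^i$, and the associated $h_{ij}:=V_i(\theta_j)$, which is precisely the fibre Hessian of $F$; because $\partial F/\partial y^i$ is homogeneous of degree zero, Euler's theorem gives $h_{ij}y^j=0$. The computation in the corollary to Theorem~\ref{bm} then identifies $\omega=d\theta$ with the K\"{a}hler lift of $h$ (up to the fixed sign convention), so that the quadratic form $q_{(x,y)}(v)=\omega(\hlift{v},\vlift{v})$ records $h_{ij}v^iv^j$. By Theorem~\ref{globmult}(2) this Hessian satisfies the Helmholtz conditions and is positive quasi-definite; hence by the converse half of Theorem~\ref{old} the globally-defined $\omega=d\theta$ satisfies conditions~1--4 everywhere, and $q$ is positive quasi-definite everywhere.

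For the converse I would run the same correspondence backwards. Given a global 2-form $\omega$ satisfying the conditions of Theorem~\ref{old}, the forward half of that theorem provides, on each coordinate neighbourhood $U$, coefficients $h_{ij}$ obeying the Helmholtz conditions with $\omega=h_{ij}\,dx^i\wedge\phi^j$ on $\tau^{-1}U$. The key point is that these local pieces patch to a single global multiplier $h$ on $\TMO$: by the Lemma the K\"{a}hler lift is a spray-independent concomitant, so $h$ is recovered canonically from the global object $\omega$ and the local tensors must agree on overlaps. Positive quasi-definiteness of $q$, which was already shown to be a concomitant of the class, then says exactly that $h$ is positive quasi-definite everywhere. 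With a global multiplier satisfying the Helmholtz conditions and positive quasi-definite, and with $H^2(M)=0$, Theorem~\ref{globmult}(2) delivers the global pseudo-Finsler function together with the local Finsler functions, which is the conclusion.

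The genuinely hard analysis --- integrating the Helmholtz conditions into a (pseudo-)Finsler function and absorbing the cohomological obstruction measured by $H^2(M)$ --- is entirely inside Theorem~\ref{globmult}, which we may assume. I therefore expect the only points needing care to be the globalization step in the converse (that a global K\"{a}hler-lift 2-form determines a global multiplier) and the matching of sign conventions relating $d\theta$, the K\"{a}hler lift, and the quadratic form $q$; everything else is bookkeeping that transfers the two lists of conditions into one another.
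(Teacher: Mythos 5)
Your proposal is correct and takes essentially the same route as the paper: the paper gives no separate proof of this theorem beyond the remark that it follows by ``putting these local results together with Theorem~\ref{globmult}'', and your argument supplies exactly that combination --- Theorem~\ref{old} and its corollaries as the local dictionary between the 2-form $\omega$ and the multiplier $h$, with all the analytic and cohomological work deferred to Theorem~\ref{globmult}. The two points you flag as needing care (that the global $\omega$ canonically determines a global multiplier via $h_{ij}=\pm\omega(H_i,V_j)$, and the sign convention relating $d\theta$ to the K\"{a}hler lift, which the paper itself treats only ``up to sign'') are indeed the only details to check, and both go through.
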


We can illustrate the role of the cohomology condition in this
theorem, in a way a little different from the way it appears in
the proof of Theorem \ref{globmult} in \cite{mult}, by examining
the obstructions to the existence of a global semi-basic 1-form
$\theta$ whose exterior derivative is the closed 2-form
$\omega$.

We first prove the local version of the result. We shall make
use of the obvious fact that a form (of any degree) on $\TMO$
which is semi-basic and closed is basic (and closed).

\begin{lem}
Let $\omega$ be a 2-form on $\TMO$ which is closed and which
vanishes when both of its arguments are vertical. Then for any
contractible coordinate neighbourhood $U\subset M$ there is a
semi-basic 1-form $\theta$ on $\tau^{-1}U$ such that
$\omega=d\theta$.
\end{lem}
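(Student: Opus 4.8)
The plan is to work in the natural coordinates $(x^i,y^i)$ on $\tau^{-1}U$ and to build the primitive in two stages: first integrate out the fibre ($dy$) directions, and then appeal to the contractibility of $U$ for what remains. Writing $\omega$ in coordinates, the hypothesis that $\omega$ vanishes on pairs of vertical vectors says exactly that it has no $dy^i\wedge dy^j$ component, so
\[
\omega=\onehalf A_{ij}\,dx^i\wedge dx^j+B_{ij}\,dx^i\wedge dy^j, \qquad A_{ji}=-A_{ij}.
\]
A semi-basic primitive must have the form $\vartheta=\theta_i\,dx^i$; then $d\vartheta$ has no $dy^i\wedge dy^j$ term automatically, while its mixed part is $-(\partial\theta_i/\partial y^j)\,dx^i\wedge dy^j$. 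So the first thing to arrange is $\partial\theta_i/\partial y^j=-B_{ij}$. For fixed $i$ and fixed $x$ this asks for a fibrewise primitive of the vertical $1$-form $\eta_i=-B_{ij}\,dy^j$, and the $dx^i\wedge dy^j\wedge dy^k$ component of the equation $d\omega=0$ is precisely the closedness condition $\partial B_{ij}/\partial y^k=\partial B_{ik}/\partial y^j$ that makes each $\eta_i$ closed along the fibre.

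\emph{Step 1 (fibre integration).} This is where the standing assumption $\dim M\ge 3$ does its work: the fibre $\TxMO\cong\R^n\setminus\{0\}$ is simply connected precisely when $n\ge 3$, so a closed fibre $1$-form is fibrewise exact. I would solve $\partial\theta_i/\partial y^j=-B_{ij}$ by a vertical homotopy (Poincar\'e) operator, obtaining functions $\theta_i$ on $\tau^{-1}U$. It is worth noting that the hypothesis on $\omega$ is also what makes this possible globally over the fibre: the restriction of $\omega$ to a single fibre is just its $dy\wedge dy$ part, which vanishes, so $\omega$ carries no fibre cohomology class that could obstruct the integration. This is exactly the point at which the potential $n=3$ difficulty (where $H^2(S^2)\neq 0$) is sidestepped.

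\emph{Step 2 (reduction to the base).} With such $\theta_i$ in hand, put $\beta=\omega-d\vartheta$ with $\vartheta=\theta_i\,dx^i$. By construction the mixed term cancels and there is no $dy\wedge dy$ term, so only $dx^i\wedge dx^j$ survives and $\beta$ is a semi-basic $2$-form; moreover $\beta$ is closed, since $\omega$ and $d\vartheta$ are. Invoking the fact quoted just before the lemma, namely that a semi-basic closed form is basic, I get $\beta=\tau^*\bar\beta$ for a closed $2$-form $\bar\beta$ on $U$. Since $U$ is contractible the ordinary Poincar\'e lemma gives $\bar\beta=d\bar\psi$, and then $\theta=\vartheta+\tau^*\bar\psi$ is semi-basic with $d\theta=d\vartheta+\beta=\omega$, as required.

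The main obstacle is Step 1: producing the fibrewise primitives $\theta_i$ as genuinely smooth functions of all variables, rather than merely fibre by fibre. The fibrewise existence is just the simple connectivity of $\R^n\setminus\{0\}$, but the joint smoothness in $(x,y)$ and the global-over-the-fibre character call for a homotopy operator with smooth kernel; the residual freedom in each $\theta_i$ (a function of the $x^i$ alone, since the fibres are connected) is harmless and can be absorbed in Step 2. Everything after Step 1 is formal.
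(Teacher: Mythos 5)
Your proof is correct and follows essentially the same route as the paper's: both decompose $\omega$ in coordinates, use closedness of $\omega$ to make the mixed ($dx\wedge dy$) part fibrewise exact --- with $\dim M\geq 3$ supplying the simple connectivity of the punctured fibre --- and then observe that the remaining closed semi-basic piece is basic and hence exact over the contractible $U$. Your additional care about joint smoothness of the fibrewise primitives and about exactly why $\dim M\geq 3$ enters simply makes explicit what the paper leaves tacit.
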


\begin{proof}
Set $\omega=a_{ij}dx^i\wedge dx^j+b_{ij}dx^i\wedge dy^j$. The
$dx\wedge dy\wedge dy$ term in $d\omega$ is
$V_k(b_{ij})dx^i\wedge dy^j\wedge dy^k$.
This must vanish, whence $V_k(b_{ij})=V_j(b_{ik})$.
Assuming that $\dim M\geq 3$ it follows that there are functions
$b_i(x,y)$ on $\tau^{-1}U$ such that
$b_{ij}(x,y)=V_j(b_i)(x,y)$. Then
\[
\omega+d(b_i dx^i)=\left(a_{ij}+\fpd{b_j}{x^i}\right)dx^i\wedge dx^j.
\]
The right-hand side is semi-basic and closed, so basic and
closed, so there is a 1-form $\psi$ on $U$ such that
$\omega+d(b_i dx^i)=d\psi$. Thus $\omega=d\theta$ with
$\theta=\psi-b_idx^i$, which is semi-basic.
\end{proof}

To derive the global theorem we shall need the following concepts and results.

An open covering $\mathfrak{U}=\{U_\lambda:\lambda\in\Lambda\}$
of $M$ which has the property that every $U_\lambda$, and every
non-empty intersection of finitely many of the $U_\lambda$, is
contractible is known as a good covering. It can be shown (see
\cite{mult}) that every manifold over which is defined a spray
admits good open coverings by coordinate patches.

The \v{C}ech cohomology group
$\check{H}^2(\mathfrak{U},\mathcal{R})$ of a good open covering
$\mathfrak{U}$ of $M$ is isomorphic to the de Rham cohomology
group $H^2(M)$. In particular, if $H^2(M)=0$ then
$\check{H}^2(\mathfrak{U},\mathcal{R})=0$; it is this form of
the assumption that we shall actually use in the proof.

Suppose that for a given good open covering $\mathfrak{U}$ of
$M$ by coordinate neighbourhoods, for each
$\lambda,\mu\in\Lambda$ for which $U_\lambda\cap U_\mu$ is
non-empty there is defined on $U_\lambda\cap U_\mu$ a function
$\phi_{\lambda\mu}$, and that these functions satisfy the
cocycle condition
$\phi_{\mu\nu}-\phi_{\lambda\nu}+\phi_{\lambda\mu}=0$ on
$U_\lambda \cap U_\mu \cap U_\nu$ (assuming it to be non-empty).
Then there is a locally finite refinement
$\mathfrak{V}=\{V_\alpha:\alpha\in A\}$ of $\mathfrak{U}$, and
for each $\alpha$ a function $\psi_\alpha$ defined on
$V_\alpha$, such that on $V_\alpha\cap V_\beta$ (assuming it to
be non-empty) $\phi_{\alpha\beta}=\psi_\alpha-\psi_\beta$, where
$\phi_{\alpha\beta}$ is defined from some $\phi_{\lambda\mu}$ by
restriction. This result, which is proved using a partition of
unity argument in \cite{mult}, is a particular case of the fact
that \v{C}ech cohomology is a sheaf cohomology theory (see
\cite{Warner}).

\begin{thm}
Let $\omega$ be a 2-form on $\TMO$ which is closed and which
vanishes when both of its arguments are vertical. Suppose that
$H^2(M)=0$. Then there is a semi-basic 1-form $\theta$ on $\TMO$
such that $d\theta=\omega$.
\end{thm}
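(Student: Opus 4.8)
The plan is to obtain local primitives from the preceding Lemma and glue them into a single global semi-basic $1$-form, the obstruction to the gluing being measured by a \v{C}ech class that vanishes precisely because $H^2(M)=0$. First I would choose a good covering $\mathfrak{U}=\{U_\lambda:\lambda\in\Lambda\}$ of $M$ by coordinate neighbourhoods, whose existence was noted above. Applying the Lemma on each (contractible) patch yields a semi-basic $1$-form $\theta_\lambda$ on $\tau^{-1}U_\lambda$ with $d\theta_\lambda=\omega$. The whole problem is then to adjust the $\theta_\lambda$ by exact semi-basic terms so that they coincide on overlaps.

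On $\tau^{-1}(U_\lambda\cap U_\mu)$ the difference $\theta_\lambda-\theta_\mu$ is semi-basic and closed, hence basic by the fact recalled just before the Lemma. A closed basic $1$-form over the contractible set $U_\lambda\cap U_\mu$ is the pullback of $d\phi_{\lambda\mu}$ for some function $\phi_{\lambda\mu}$ on $U_\lambda\cap U_\mu$. On a triple overlap the three differences $\theta_\lambda-\theta_\mu$, $\theta_\mu-\theta_\nu$, $\theta_\lambda-\theta_\nu$ cancel, so $d(\phi_{\mu\nu}-\phi_{\lambda\nu}+\phi_{\lambda\mu})=0$; since the intersection is connected, $\phi_{\mu\nu}-\phi_{\lambda\nu}+\phi_{\lambda\mu}=c_{\lambda\mu\nu}$ is a real constant.

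Here is the heart of the argument. The constants $c_{\lambda\mu\nu}$ form a \v{C}ech $2$-cocycle for $\mathfrak{U}$ with values in the constant sheaf $\mathcal{R}$. Because $H^2(M)=0$ we have $\check{H}^2(\mathfrak{U},\mathcal{R})=0$, so this cocycle is a coboundary: there are constants $b_{\lambda\mu}$ with $c_{\lambda\mu\nu}=b_{\mu\nu}-b_{\lambda\nu}+b_{\lambda\mu}$. Replacing $\phi_{\lambda\mu}$ by $\phi_{\lambda\mu}-b_{\lambda\mu}$ leaves $d\phi_{\lambda\mu}$, and hence the relation $\theta_\lambda-\theta_\mu=d\phi_{\lambda\mu}$ (read on $\TMO$), unchanged, while arranging that the new functions satisfy the exact cocycle condition $\phi_{\mu\nu}-\phi_{\lambda\nu}+\phi_{\lambda\mu}=0$.

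It then remains to apply the partition-of-unity result quoted above: there is a locally finite refinement $\mathfrak{V}=\{V_\alpha:\alpha\in A\}$ of $\mathfrak{U}$ and functions $\psi_\alpha$ on $V_\alpha$ with $\phi_{\alpha\beta}=\psi_\alpha-\psi_\beta$ on overlaps. Setting $\theta=\theta_\alpha-d\psi_\alpha$ on each $\tau^{-1}V_\alpha$ (with $\theta_\alpha$ the restriction of the $\theta_\lambda$ for a patch containing $V_\alpha$, and $\psi_\alpha$ pulled back to $\TMO$) produces forms that agree on overlaps, since $(\theta_\alpha-\theta_\beta)-d(\psi_\alpha-\psi_\beta)=d\phi_{\alpha\beta}-d\phi_{\alpha\beta}=0$; they are semi-basic, and $d\theta=d\theta_\alpha=\omega$. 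I expect the only real obstacle to be the middle step: recognising that the indeterminacy of the primitives $\phi_{\lambda\mu}$ by additive constants assembles into exactly a degree-two \v{C}ech class, which is what forces the hypothesis $H^2(M)=0$; once the cocycle of constants is killed, the remaining gluing is the routine sheaf-theoretic descent already packaged in the quoted results.
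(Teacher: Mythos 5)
Your proposal is correct and follows essentially the same route as the paper's own proof: local semi-basic primitives from the Lemma on a good covering, differences $\theta_\lambda-\theta_\mu=d\phi_{\lambda\mu}$ with the $\phi_{\lambda\mu}$ basic, constants on triple overlaps assembling into a \v{C}ech 2-cocycle killed by $H^2(M)=\check{H}^2(\mathfrak{U},\mathcal{R})=0$, and finally the quoted refinement/partition-of-unity result to write $\phi_{\alpha\beta}=\psi_\alpha-\psi_\beta$ and glue $\theta=\theta_\alpha-d\psi_\alpha$. The only step you assert rather than check is the quadruple-overlap identity showing the constants really form a cocycle, which the paper writes out explicitly but which is routine.
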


\begin{proof}
Let $\mathfrak{U}$ be a good open covering of $M$ by coordinate
neighbourhoods. On each $U_\lambda$ there is a semi-basic 1-form
$\theta_\lambda$ such that $\omega=d\theta_\lambda$. On
$U_\lambda\cap U_\mu$, $d(\theta_\lambda-\theta_\mu)=0$; that
is, $\theta_\lambda-\theta_\mu$ is semi-basic and closed, so
there is a function $\phi_{\lambda\mu}$ on $U_\lambda\cap U_\mu$
such that $\theta_\lambda-\theta_\mu=d\phi_{\lambda\mu}$. On
$U_\lambda \cap U_\mu \cap U_\nu$,
$d(\phi_{\mu\nu}-\phi_{\lambda\nu}+\phi_{\lambda\mu})=0$, so
$\phi_{\mu\nu}-\phi_{\lambda\nu}+\phi_{\lambda\mu}$ is a
constant, say $k_{\lambda\mu\nu}$. For any four members
$U_\kappa,U_\lambda,U_\mu,U_\nu$ of $\mathfrak{U}$ whose
intersections in threes are non-empty
\[
 k_{\lambda\mu\nu}-k_{\kappa\mu\nu}+k_{\kappa\lambda\nu}
-k_{\kappa\lambda\mu}=0.
\]
That is to say, $k$ is a 2-cocycle in the \v{C}ech cochain complex
for the covering $\mathfrak{U}$ with values in the constant
sheaf $M\times\R$. Under the assumption that
$H^2(M)=\check{H}^2(\mathfrak{U},\mathcal{R})=0$, it must be a
coboundary. Thus we can modify each $\phi_{\lambda\mu}$ by the
addition of a constant, so that (after modification)
$\phi_{\mu\nu}-\phi_{\lambda\nu}+\phi_{\lambda\mu}=0$. There is
thus a refinement $\mathfrak{V}=\{V_\alpha:\alpha\in A\}$ of
$\mathfrak{U}$, and for each $\alpha$ a function $\psi_\alpha$
defined on $V_\alpha$, such that on $V_\alpha\cap V_\beta$
(assuming it to be non-empty)
$\phi_{\alpha\beta}=\psi_\alpha-\psi_\beta$. But then
$\theta_\alpha-d\psi_\alpha=\theta_\beta-d\psi_\beta$ on
$V_\alpha\cap V_\beta$. So if we set
$\theta=\theta_\alpha-d\psi_\alpha$ on $V_\alpha$, $\theta$ is a
well-defined semi-basic 1-form on $\TMO$ such that
$d\theta=\omega$.
\end{proof}

We have shown that when $H^2(M)=0$ there is a globally defined
semi-basic 1-form $\theta$ such that $d\theta=\omega$. In virtue
of the other conditions on $\omega$, this 1-form must satisfy
the differential conditions of the theorem of Bucataru and
Muzsnay.

\section{Almost Grassmann structures}

We now make a detour to discuss another approach to the
construction of a 2-form indicating Finsler metrizability, which
gives a new geometrical interpretation of the vertical
subspaces, on the one hand, and the two-dimensional subspaces of
the form $\langle\hlift{v},\vlift{v}\rangle$, on the other,
which play an important role in the conditions for the
existence of a Finsler function discussed in the previous
section. This approach necessitates the use of an almost
Grassmann structure~\cite{AG}.

Formally, an almost Grassmann structure on a manifold $N$ of
dimension $pq$, $p \geq 2$, $q \geq 2$, may be regarded as a
Cartan geometry modelled on the Grassmannian of $p$-dimensional
subspaces of $\R^{p+q}$~\cite{Sharpe}. One way to define such a
structure is by specifying a class of local bases of 1-forms
$\{\theta^i_\a\}$, any two such local bases of the class being
related by a formula $\hat{\theta}^i_\a = B^i_j A^\b_\a
\theta^j_\b$ where $(A^\a_\b)$ and $(B^i_j)$ are local
matrix-valued functions, respectively $p\times p$ and $q\times
q$, both non-singular.

Given an almost Grassmann structure, we denote the local basis
of vector fields dual to a local basis of 1-forms $\{\theta^i_\a \}$
in the structure by $\{ E^\a_i \}$, so that any
vector $v\in T_x N$ may be written as $v_\a^i E^\a_i(x)$. Of
special interest are those vectors $v$ for which the coefficient
matrix $(v_\a^i)$ has rank 1; the set of such $v$ forms a cone
in $T_x N$ called the \Se\ cone. That is to say, the \Se\ cone
at $x \in N$ consists of those elements of $T_x N$ that can be
expressed in the form $s_\a t^i E^\a_i(x)$ with respect to one,
and hence any, basis $\{ E^\a_i \}$ defined by the structure,
where $(s_\a)\in \R^p$ and $(t^i) \in \R^q$. For fixed non-zero
$(t^i)$, as $(s_\a)$ varies over $\R^p$ we obtain a
$p$-dimensional subspace of $T_x N$ contained in the \Se\ cone;
we call it a $p$-dimensional plane generator of the \Se\ cone.
The $p$-dimensional plane generators of \Se\ cones are
parametrized by the points of the projective space $\P^{q-1}$.
Similarly, on fixing non-zero $(s_\a)$, as $(t^i)$ varies over
$\R^q$ we obtain a $q$-dimensional plane generator of the \Se\
cone.

There is an almost Grassmann structure of type $(2,n)$
associated with each projective class of sprays on the
$2n$-dimensional manifold $\TMO$. This structure is not,
however, defined on $\TMO$ itself, but on a related
$2n$-dimensional bundle $\traco\to M$ obtained from a vector
bundle $\trac\to M$ by deleting the zero section.

We may construct $\trac$ using a technique described
in~\cite{Japan}. We let $\VM$ be the manifold of equivalence
classes $[\pm\theta]$ of non-zero volume elements $\theta\in
\bigwedge^n T^* M$, and let $\nu : \VM \to M$ be given by
$\nu([\pm\theta]) = x$ where $\theta,-\theta \in \bigwedge^n
T^*_x M$. Given coordinates $x^i$ on $M$, define the map $v$ by
\[
\theta = v(\theta) \left( dx^1 \wedge \cdots \wedge dx^n \right)_x
\]
and let $x^0 = |v|^{1/(n+1)}$ be a fibre coordinate on $\nu$. In
this way $\nu : \VM \to M$ becomes a principal $\R_+$ bundle
with fundamental vector field $\U = x^0 \partial / \partial
x^0$. Now consider the tangent bundle $T\VM \to \VM$ and the
vector fields
\[
\vlift{\U} = x^0 \fpd{}{y^0} , \quad \Ut = \clift{\U} - \Dt
= x^0 \fpd{}{x^0} - y^i \fpd{}{y^i}
\]
where $\Dt$ is the dilation field on $T\VM$. The distribution
spanned by these two vector fields is integrable, and the
quotient is a manifold $\trac$ which does not project to $\VM$
but does define a vector bundle over $M$. The fibre coordinates
$(u^i)$ on the new bundle are defined in terms of the fibre
coordinates $(y^i)$ of $TM$ by $u^i = x^0 y^i$; the quotient
manifold may be thought of as the tensor product of the ordinary
tangent bundle with the bundle of scalar densities of weight
$1/(n + 1)$.

The construction of the almost Grassmann structure
may also be found in~\cite{Japan}. For any spray
\[
y^i \fpd{}{x^i} - 2 \Gamma^i \fpd{}{y^i}
\]
on $\TMO$ there is a well-defined horizontal distribution on
$\traco$, spanned locally by the vector fields
\[
\K_i = \fpd{}{x^i} -
\left( \Gamma^j_i - \frac{1}{n+1} u^j \Gamma_i \right) \fpd{}{u^j} ,
\]
where $u^i$ are the natural fibre coordinates on $\traco$ and
\[
\Gamma^j_i = \fpd{\Gamma^j}{y^i}, \quad
\Gamma_i=\fpd{\Gamma^k_k}{y^i} .
\]
If two sprays are related by a projective transformation with
function $P$, the vector fields are modified according to
the rule
\[
\K_i \mapsto \K_i -P\fpd{}{u^i}.
\]
We shall write, for $v\in T_xM$,
\[
\hlift{v}=v^i\K_i,\quad
\vlift{v}=v^i\vf{u^i}.
\]

Now suppose given a projective class of sprays. Choose a
particular spray in the class; from the remarks above we see
that in a coordinate patch with coordinates $(x^i,u^i)$ the
1-forms
\[
\theta_1^i = dx^i, \quad \theta_2^i =
du^i + \left( \Gamma^i_j - \frac{1}{n+1} u^i \Gamma_j \right)dx^j
\]
transform as
\[
\hat{\theta}^i_1 = J^i_j \theta^j_1, \quad \hat{\theta}^i_2 =
 |J|^{-1/(n+1)} J^i_j \theta^j_2
\]
under a coordinate transformation, where $J^i_j$ is the Jacobian
matrix of the transformation on $M$ and $|J|$ is its
determinant, and as
\[
\hat{\theta}^i_1 = \theta^i_1, \quad \hat{\theta}^i_2
= \theta^i_2 + P\theta^i_1
\]
under a projective transformation. It follows that the set of
locally-defined 1-forms $\{A^i_jA^\b_\a\theta^j_\b\}$, with
$\a,\b=1,2$, $i,j=1,2,\ldots,n$, with $(A^i_j)$, $(A^\b_\a)$
arbitrary local non-singular-matrix-valued functions, of size
$n\times n$ and $2\times 2$ respectively, is defined
independently of the choice of coordinates and of the choice of
spray within the projective class. These 1-forms
therefore determine an almost Grassmann structure of type
$(2,n)$ on $\traco$.

The Segre cone at a point $(x,u)$ of $\traco$ consists of
vectors of the form $a\hlift{v}+b\vlift{v}$ for $a,b\in\R$ and
$v\in T_xM$. The $n$-dimensional plane generators of the Segre
cone are obtained by fixing $a$ and $b$ and letting $v$ range
over $T_xM$; they consist of
the horizontal subspace with respect to each spray of the
projective class together with the vertical subspace. For the
two-dimensional plane generators we fix $v$ and allow the
coefficients to vary over $\R^2$. Notice that $\D_{(x,u)}$, where
$\D$ is the involutive two-dimensional distribution spanned by
$\Delta$ and any spray of the class, is a two-dimensional plane
generator of the Segre cone at $(x,u)$.

Each Finsler geometry on $\TMO$ determines a projective class of
sprays, and therefore determines an almost Grassmann structure
on $\traco$. We may determine a relationship between the two
structures using the fact that if $\omega$ is a closed form on
$\TMO$ satisfying $i_\Delta\omega = 0$ then its pull-back
$(\nu_*)^*\omega$ by $\nu_* : \TVO \to \TMO$ is projectable to a
form on $\traco$, and apply this to the Hilbert 2-form.

\begin{thm} {\em \cite{Japan}}
To each Finsler function $F$ on $\TMO$ there is associated a
closed 2-form $\varpi$ on $\traco$, such that the
characteristic distribution of $\varpi$ is the two-dimensional
distribution $\D$ corresponding to the geodesic sprays of $F$,
and such that the $n$-dimensional plane generators of the \Se\
cones are isotropic with respect to $\varpi$.

Conversely, suppose given a projective class of
sprays on $\TMO$ and corresponding almost Grassmann structure.
If there is a 2-form $\varpi$ on $\traco$ such that
\begin{enumerate}
\item the $n$-dimensional plane generators of the \Se\ cones
are isotropic with respect to $\varpi$;
\item the characteristic distribution of $\varpi$ is $\D$;
\item $\varpi$ is closed
\end{enumerate}
then the projective class is the geodesic class of a locally-defined
pseudo-Finsler function.
\end{thm}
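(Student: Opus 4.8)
The plan is to handle both directions by transporting $2$-forms across the double fibration $\TMO \xleftarrow{\nu_*}\TVO\xrightarrow{\pi}\traco$, where $\pi$ is the quotient by the foliation $\langle\vlift{\U},\Ut\rangle$, and then to reduce everything to Theorem~\ref{old} and Corollary~\ref{oldcor}. The computation that drives the whole argument is that, as maps on tangent vectors, $\nu_*\vlift{\U}=0$ and $\nu_*\Ut=-\Delta$. Consequently, if $\omega$ is a closed $2$-form on $\TMO$ with $i_\Delta\omega=0$, then $i_{\vlift{\U}}(\nu_*)^*\omega=i_{\Ut}(\nu_*)^*\omega=0$ and, since $(\nu_*)^*\omega$ is again closed, $\lie{\vlift{\U}}(\nu_*)^*\omega=\lie{\Ut}(\nu_*)^*\omega=0$; thus $(\nu_*)^*\omega$ is basic for the foliation and descends to a closed $2$-form $\varpi$ on $\traco$ with $\pi^*\varpi=(\nu_*)^*\omega$. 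This is exactly the projectability fact quoted just above, and it is the bridge used in both directions.

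For the first assertion I would take $\omega=d\theta$, the Hilbert $2$-form of $F$. As $\theta$ is semi-basic, $i_\Delta\theta=0$, and homogeneity gives $\lie{\Delta}\theta=0$, so Cartan's formula yields $i_\Delta\omega=0$; moreover $\omega$ is exact, hence closed. By the computation in the corollary to Theorem~\ref{bm} we have $\omega=-h_{ij}\,dx^i\wedge\phi^j$ with $h_{ij}=\spd{F}{y^i}{y^j}$, which is symmetric, satisfies $h_{ij}y^j=0$, and is homogeneous of degree $-1$. Writing $\varpi=-h_{ij}(x,u)\,\theta_1^i\wedge\theta_2^j$ on $\traco$, one checks using $h_{ij}u^j=0$ and the degree $-1$ homogeneity that $\pi^*\varpi=(\nu_*)^*\omega$, so this $\varpi$ is the descended form. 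Being of $\theta_1\wedge\theta_2$ type with symmetric coefficients, $\varpi$ has isotropic $n$-dimensional plane generators, and it is closed. Finally, strong convexity makes $h$ quasi-regular, so $\omega$ has rank $2n-2$ and characteristic distribution exactly $\D$; since pullback along a submersion preserves the rank of a form, $\varpi$ also has rank $2n-2$, and as its characteristic distribution contains $\D$ it equals $\D$.

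For the converse I would run the dictionary backwards. Isotropy of all $n$-dimensional plane generators is equivalent, for a $2$-form on $\traco$, to its having the shape $\varpi=Q_{ij}\,\theta_1^i\wedge\theta_2^j$ with $Q_{ij}=Q_{ji}$ (the $\theta_1\wedge\theta_1$ and $\theta_2\wedge\theta_2$ parts vanish and the mixed part is symmetric). Writing $\Delta=u^i\vf{u^i}$ for the Liouville field on $\traco$, the condition that $\D$ be the characteristic distribution gives $i_\Delta\varpi=0$, that is $Q_{ij}u^j=0$; and closedness together with $\Delta$ being characteristic gives $\lie{\Delta}\varpi=0$, which, since $\lie{\Delta}\theta_1^i=0$ and $\lie{\Delta}\theta_2^i=\theta_2^i$, forces $\Delta(Q_{ij})=-Q_{ij}$, i.e.\ $Q$ homogeneous of degree $-1$ in $u$. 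Pulling back to $\TVO$ and substituting $u^i=x^0y^i$, the terms of $Q_{ij}\,dx^i\wedge\pi^*\theta_2^j$ carrying $dx^0$ and $\Gamma_k\,dx^k$ drop out because $Q_{ij}u^j=0$, leaving $\pi^*\varpi=x^0Q_{ij}(x,x^0y)\,dx^i\wedge\phi^j$; the degree $-1$ homogeneity then removes the factor $x^0$, so $\pi^*\varpi=(\nu_*)^*\omega$ with $\omega=h_{ij}\,dx^i\wedge\phi^j$ and $h_{ij}(x,y)=Q_{ij}(x,y)$, symmetric and satisfying $h_{ij}y^j=0$. Injectivity of $(\nu_*)^*$ turns $\pi^*d\varpi=0$ into $d\omega=0$, and a closed K\"ahler lift of this form satisfies all four hypotheses of Theorem~\ref{old}: assumption 3 is immediate, while assumptions 1, 2 and 4 follow from $i_\Delta\omega=i_\Gamma\omega=0$ and $d\omega=0$. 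The rank argument again shows that the characteristic distribution of $\omega$ is exactly $\D$, so Corollary~\ref{oldcor} supplies the local pseudo-Finsler function.

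The step I expect to be the main obstacle is the descent of $\pi^*\varpi$ to $\TMO$. A priori $\pi^*\varpi$ is basic only for the $\langle\vlift{\U},\Ut\rangle$-foliation, whereas descending to $\TMO$ requires it to be basic for $\ker\nu_*=\langle\vf{x^0},\vf{y^0}\rangle$; these two rank-$2$ distributions differ precisely by the dilation term $\Dt$ hidden inside $\Ut$. What rescues the argument is the interplay of the two structural hypotheses: $Q_{ij}u^j=0$ annihilates the $dx^0$-component, while the degree $-1$ homogeneity coming from $\lie{\Delta}\varpi=0$ kills the residual $x^0$-dependence of the coefficients. Keeping track of the extra $\frac{1}{n+1}u^i\Gamma_j$ term in $\theta_2^i$ and of the $x^0$ prefactors is the only place where real care is needed; every other ingredient is a transcription of facts already established for $\omega$ on $\TMO$.
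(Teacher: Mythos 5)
Your proposal is correct and takes essentially the same approach as the paper: the paper's own proof consists of the projectability fact for $(\nu_*)^*\omega$ across the double fibration $\TMO \leftarrow \TVO \rightarrow \traco$ applied to the Hilbert 2-form, plus the remark that $\varpi$ must take the form $h_{ij}\,dx^i\wedge\theta^j_2$ with $h_{ij}$ the (putative) Hessian, deferring all details to \cite{Japan}. Your identification of $\varpi=Q_{ij}\,\theta_1^i\wedge\theta_2^j$ from isotropy of the $n$-plane generators, the homogeneity and $Q_{ij}u^j=0$ arguments that make the descent to $\TMO$ work, and the reduction to Theorem~\ref{old} and Corollary~\ref{oldcor} are precisely the details the paper leaves to that reference.
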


\begin{proof}
In fact $\varpi$ must take the form $h_{ij}dx^i\wedge
\theta_2^j$ with $h_{ij}$ the Hessian, or putative Hessian, of
$F$, much as in Theorem \ref{old}:\ see \cite{Japan} Theorems~4
and~6 for the details.
\end{proof}

We can now further refine this result. Let $\alpha$ be a
2-covector on some vector space $V$ of dimension at least two,
and $W$ a two-dimensional subspace of $V$. Then either
$\alpha|_W\equiv 0$, or $\alpha(w_1,w_2)=0$ for $w_1,w_2\in W$
only if $w_1$ and $w_2$ are linearly dependent. In the former
case we say that $\alpha$ vanishes on $W$.

\begin{cor}
If the 2-form $\varpi$ vanishes on a 2-plane
generator of Segre cones only if it is a generator determined by
$\D$ then either $h$ or $-h$ is positive quasi-definite, and there
is a local Finsler function whose geodesic class is the given
projective class.
\end{cor}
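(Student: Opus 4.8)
The plan is to reduce the hypothesis to the positive quasi-definiteness condition of Corollary~\ref{oldfins} by a pointwise argument in linear algebra, and then to apply that corollary. By the theorem just stated we already know that $\varpi=h_{ij}\,dx^i\wedge\theta^j_2$, where $h$ is symmetric and satisfies the Helmholtz conditions; in particular $h_{ij}u^j=0$.

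First I would evaluate $\varpi$ on a 2-plane generator. For $v\in T_xM$ the generator spanned by $\hlift v=v^i\K_i$ and $\vlift v$ is two-dimensional, and using the duality of $\{dx^i,\theta^i_2\}$ with $\{\K_i,\partial/\partial u^i\}$ one finds $\varpi(\hlift v,\vlift v)=h_{ij}v^iv^j$. Since $\hlift v$ and $\vlift v$ are independent whenever $v\neq0$, the elementary fact about 2-covectors noted above shows that $\varpi$ vanishes on this generator exactly when $h_{ij}v^iv^j=0$. Because $h_{ij}u^j=0$, the form $\varpi$ always vanishes on the generator $\langle\hlift u,\vlift u\rangle$ determined by $\D$, so the hypothesis says precisely that the quadratic form $q(v)=h_{ij}v^iv^j$ vanishes if and only if $v$ is proportional to $u$.

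The heart of the argument is then a statement about the symmetric bilinear form $h$ at a fixed point. The direction of $u$ lies in the radical of $h$; conversely, if $h_{ij}w^j=0$ then $q(w)=0$, so by the hypothesis $w$ is proportional to $u$. Hence the radical of $h$ is exactly $\langle u\rangle$, the form $h$ is quasi-regular, and $q$ descends to a nondegenerate quadratic form $\bar q$ on the $(n-1)$-dimensional quotient $T_xM/\langle u\rangle$. By construction $\bar q$ has no nonzero isotropic vector, and a nondegenerate quadratic form with no nonzero isotropic vector is definite. Thus $q$ is either positive or negative definite transverse to $u$; that is, at each point either $h$ or $-h$ is positive quasi-definite.

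It remains to fix the sign. The loci on which $\bar q$ is positive definite and on which it is negative definite are both open and together exhaust $\traco$, so by connectedness the sign is constant; replacing $\varpi$ by $-\varpi$ (which preserves conditions~1--3 and the characteristic distribution) if necessary, I may assume $q$ is positive quasi-definite everywhere. This is exactly the hypothesis of Corollary~\ref{oldfins} (the multiplier $h$ being the same), which then yields a local Finsler function whose geodesic class is the given projective class. The step I expect to be the main obstacle is the middle one: recognising that the purely geometric non-degeneracy requirement on $\varpi$ collapses the isotropic cone of $h$ to the single line $\langle u\rangle$, from which both the dichotomy and the definiteness follow by bilinear algebra.
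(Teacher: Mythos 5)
Your proof is correct and follows essentially the same route as the paper's: evaluate $\varpi(\hlift{v},\vlift{v})=h_{ij}v^iv^j$ on the 2-plane generators, conclude from the hypothesis that this quadratic form vanishes only on multiples of $u$, deduce pointwise (quasi-)definiteness, and fix the global sign by a continuity/connectedness argument before invoking the positive quasi-definiteness criterion for a local Finsler function. You merely spell out the linear-algebra step (radical, quotient, no nonzero isotropic vectors implies definite) and the final appeal to Corollary~\ref{oldfins}, both of which the paper leaves implicit.
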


\begin{proof}
At any $(x,u)\in\traco$ we have, for $v\in T_xM$,
$\varpi(\hlift{v},\vlift{v})=h_{ij}v^iv^j$. This cannot
vanish unless $v$ is a scalar multiple of $u$. Thus at each
point of $\traco$ $h$ is either positive or negative
quasi-definite. By continuity either $h$ or $-h$ must be positive
quasi-definite everywhere.
\end{proof}

\section{Path space}

We take up the argument from where we left it in Section \ref{2form}.

We now assume that we can quotient out by the foliation on
$\TMO$ defined by the involutive distribution $\mathcal{D}$,
that is, that there is a $(2n-2)$-dimensional manifold
$P_{\mathcal{D}}$ --- {\em path space} --- such that
$\pi:\TMO\to P_{\mathcal{D}}$ is a fibration whose fibres are
the leaves of the foliation.  So we have a double fibration
\[
M\stackrel{\tau}{\longleftarrow} \TMO
\stackrel{\pi}{\longrightarrow} P_{\mathcal{D}}.
\]
(It has to be admitted that in general there is no reason for the path space of a projective class of sprays to be a smooth manifold. For the geodesic class of sprays of a Riemmannian metric, for example, two well-known cases where the path space can be given the structure of a smooth manifold are the cases where the Riemannian manifold is either a Hadamard manifold (i.e. a complete simply connected Riemannian manifold of non-positive curvature) or a manifold with closed geodesics of the same length. These two cases are discussed in detail in e.g.\ Ferrand \cite{Ferrand} and Besse \cite{Besse}, respectively.)

For any $x\in M$, denote by $\hat{x}$ the submanifold $\pi(\TxMO)$ of
$P_{\mathcal{D}}$, that is, the image under $\pi$ of the fibre of
$\TMO$ over $x$:\ it is the submanifold consisting of all paths through
$x$.  It is of dimension $n-1$, because $\Delta$ is vertical.

The following theorem is our version of Theorem~4.1 of \cite{AP}.

\begin{thm}\label{AP}
A projective class of sprays is the geodesic class of a
pseudo-Finsler function if and only if there is a symplectic 2-form
$\Omega$ on $P_{\mathcal{D}}$ such that $\hat{x}$ is a Lagrangian
submanifold of $P_{\mathcal{D}}$ with respect to $\Omega$, for every
$x\in M$.
\end{thm}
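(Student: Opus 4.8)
The plan is to realize the path-space 2-form $\Omega$ and the 2-form $\omega$ of Theorem~\ref{old} as the same object, related by $\omega=\pi^*\Omega$. The mechanism is the standard descent principle for a fibration: a form on the total space is the pullback of a form on the base precisely when it is basic, that is, annihilated by $i_Z$ and $\lie{Z}$ for every $Z$ tangent to the fibres; here the fibres of $\pi$ are the leaves of $\D$. So I would first record that a $\D$-basic, closed 2-form $\omega$ on $\TMO$ descends to a closed 2-form $\Omega$ on $P_{\mathcal{D}}$, and that under this correspondence the characteristic distribution of $\omega$ equals $\ker\pi_*=\D$ exactly when $\Omega$ is nondegenerate, hence symplectic.

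For the forward implication I would start from the Hilbert 2-form $\omega=d\theta$ of the given pseudo-Finsler function. By Theorem~\ref{old} and Corollary~\ref{oldclosed} this $\omega$ is closed and satisfies $i_Z\omega=0=\lie{Z}\omega$ for all $Z\in\D$, so it is $\D$-basic and descends to a closed $\Omega$ with $\omega=\pi^*\Omega$; quasi-regularity of the Hessian makes the characteristic distribution exactly $\D$, so $\Omega$ is symplectic. To see that $\hat{x}$ is Lagrangian, note that $\Delta$ is the only $\D$-direction tangent to the fibre $\TxMO$, whence $T\hat{x}$ is spanned by the $\pi_*$-images of the $\tau$-vertical vectors and $\dim\hat{x}=n-1=\onehalf\dim P_{\mathcal{D}}$; isotropy then suffices, and for vertical $V_1,V_2$ we have $\Omega(\pi_*V_1,\pi_*V_2)=\omega(V_1,V_2)=0$ by condition~3 of Theorem~\ref{old}.

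For the converse I would set $\omega=\pi^*\Omega$. It is closed, so condition~4 of Theorem~\ref{old} is automatic; $\omega(V_1,V_2)=\Omega(\pi_*V_1,\pi_*V_2)=0$ on $\tau$-verticals because $\pi_*V_i\in T\hat{x}$ and $\hat{x}$ is isotropic, giving condition~3; nondegeneracy of $\Omega$ together with surjectivity of $\pi_*$ forces $\ker\omega=\ker\pi_*=\D$, which is condition~1 in sharp form; and $\Gamma\in\D$ gives $i_\Gamma\omega=0$, hence $\lie{\Gamma}\omega=d\,i_\Gamma\omega+i_\Gamma d\omega=0$, which is condition~2. Corollary~\ref{oldcor} then produces the pseudo-Finsler function.

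The routine content is checking that $\pi^*$ intertwines the conditions on $\omega$ with those on $\Omega$. The step I expect to be the crux is the two-way identification of the sharp characteristic-distribution condition with nondegeneracy: one must verify that the rank of $\omega$ drops along $\D$ by exactly the right amount, so that the descended form is genuinely symplectic and not merely presymplectic, and dually that a symplectic $\Omega$ pulls back to a form whose kernel is precisely the fibre directions. Once that is settled, the equivalence between the vertical subspaces being $\omega$-isotropic and each $\hat{x}$ being $\Omega$-Lagrangian is immediate from the dimension count $\dim\hat{x}=\onehalf\dim P_{\mathcal{D}}$.
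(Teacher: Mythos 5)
Your proposal is correct and follows essentially the same route as the paper: both directions proceed by transporting the 2-form across the fibration $\pi$ (descending the Hilbert 2-form using Theorem~\ref{old} and Corollary~\ref{oldclosed} one way, pulling back $\Omega$ and invoking Corollary~\ref{oldcor} the other), with the verticals' isotropy corresponding to the Lagrangian condition on $\hat{x}$. Your treatment merely makes explicit two points the paper leaves terse --- the basic-form descent criterion and the equivalence between sharpness of the characteristic distribution and nondegeneracy of $\Omega$ --- but the substance is identical.
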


\begin{proof}
Suppose that the projective class of sprays is
derivable from a pseudo-Finsler function. Let
$\omega=h_{ij}dx^i\wedge\phi^j$ be the Hilbert 2-form on $\TMO$.
It satisfies the conditions of Theorem~\ref{old}, is closed, has
$\D$ for its characteristic distribution, and satisfies
$\lie{Z}\omega=0$ for every vector field $Z$ in $\D$. It
therefore passes to the quotient, that is, there is a 2-form
$\Omega$ on $P_{\mathcal{D}}$ such that $\pi^*\Omega=\omega$.
Then $\Omega$ is non-singular. Moreover $\pi^*d\Omega=0$; but
$\pi$ is surjective, so $d\Omega=0$. Thus $\Omega$ is
symplectic. Let $p\in\hat{x}$ and $\xi,\eta\in T_p\hat{x}$. Then
there is $y\in\TxMO$, and $v,w\in T_y\TxMO$ (i.e.\ vertical vectors
at $y$) such that $p=\pi(y)$, $\xi=\pi_*v$, $\eta=\pi_*w$, and
\[
\Omega_p(\xi,\eta)=\Omega_p(\pi_*v,\pi_*w)
=\pi^*\Omega_p(v,w)=\omega_y(v,w)=0.
\]
Conversely, suppose that there is a 2-form $\Omega$ on
$P_{\mathcal{D}}$ with the stated properties.  Set
$\omega=\pi^*\Omega$.  Then $d\omega=0$, and the characteristic
distribution of $\omega$ is $\mathcal{D}$.  Evidently
$\lie{\Gamma}\omega=0$ for any spray in the class.  Let $x\in M$,
$y\in\TxMO$, and $v,w\in T_y\TxMO$ (i.e.\ any vertical vectors at $y$).
Then
\[
\omega_y(v,w)=\pi^*\Omega_p(v,w)=\Omega_p(\pi_*v,\pi_*w)=0
\]
because $\pi_*v,\pi_*w\in T_p\hat{x}$.  Now apply
Corollary~\ref{oldcor}.
\end{proof}

When $\dim M=2$ the dimension of $P_\D$ is also 2, so in this
case there is essentially no condition in Theorem~\ref{AP},
because every 2-form is closed, and $\hat{x}$ is 1-dimensional.
That is, every volume form (nowhere vanishing 2-form) on $P_\D$
satisfies the conditions of the theorem. Moreover, we see that
the freedom in choice of Hessians of pseudo-Finsler functions is
the same as the freedom in choice of volume forms in 2
dimensions, that is, multiplication by a function on $P_\D$.
These observations give another interpretation of the results on
the two-dimensional case in \cite{mult}.

\section{Jacobi fields}

Roughly speaking, a point in path space $P_{\mathcal{D}}$ represents a
geodesic, and so a tangent vector to path space at a point in it is an
`infinitesimal connecting vector to a nearby geodesic', that is, a
Jacobi field along the initial geodesic.  This observation, when
tidied up, gives another interpretation of the requirement that
$\hat{x}$ is a Lagrangian submanifold of $P_{\mathcal{D}}$ with
respect to $\Omega$.

In order to discuss Jacobi fields we have to fix the parametrization,
that is, choose a specific spray $\Gamma$ from the projective
equivalence class.  However, since the argument to be presented below
leads merely to a reinterpretation of the conditions just mentioned,
which we know from the previous section to be defined for the whole
projective class, it clearly makes no difference which particular
spray from the class we choose to work with.

Let $t\mapsto \gamma(t)\in M$ be a geodesic, that is, a base
integral curve of $\Gamma$.  Then $t\mapsto
\bar{\gamma}(t)=(\gamma(t),\dot{\gamma}(t))$ is an integral curve of
$\Gamma$ in $\TMO$, and in coordinates
\[
\ddot{\gamma}^i(t)+2\Gamma^i(\gamma(t),\dot{\gamma}(t))=0.
\]
Let $Z$ be a vector field along $\bar{\gamma}$ such that
$\lie{\Gamma}Z=0$.  We set $\zeta=\tau_*Z$, a vector field along the
geodesic $\gamma$; then the condition $\lie{\Gamma}Z=0$ is equivalent
to $Z=\hlift{\zeta}+\vlift{(\nabla\zeta)}$ where
$\nabla^2\zeta^i+R_j^i\zeta^j=0$.  That is to say, $\zeta$ is a Jacobi
field along $\gamma$, and there is a 1-1 correspondence between Jacobi
fields along $\gamma$ and vector fields which are Lie transported
along $\bar{\gamma}$.  Evidently $\dot{\gamma}$ is a Jacobi field
along $\gamma$, corresponding to the restriction of $\Gamma$ to
$\bar{\gamma}$.  Moreover, $t\mapsto t\dot{\gamma}(t)$ is a Jacobi
field along $\gamma$, corresponding to the restriction to
$\bar{\gamma}$ of $t\Gamma+\Delta$.  These Jacobi fields in the
tangent direction of $\gamma$ may be regarded as trivial.  We denote
by $\J_\gamma$ the space of Jacobi fields along $\gamma$.  It is a
$2n$-dimensional real vector space.  We denote by $\J^0_\gamma$ the
quotient of $\J_\gamma$ by the two-dimensional subspace consisting of
the trivial Jacobi fields which lie in the direction tangent to
$\gamma$.

There is a leaf of the involutive distribution $\D$ containing
$\bar{\gamma}$: call it $\L_\gamma$. It consists of all points of
$\TMO$ of the form $(\gamma(t),e^s\dot{\gamma}(t))$ for
$(s,t)\in\R^2$ (assuming that $\gamma$ is defined on $\R$). The leaf
$\L_\gamma$ determines a point $p=\pi(\L_\gamma)\in P_\D$. Now let
$Z$ be a vector field defined over $\L_\gamma$ (but not tangent to
it; strictly speaking, $Z$ is a vector field along the injection
$\L_\gamma\to\TMO$). The Lie derivative of such a vector field $Z$ by
any vector field in $\D$ is well defined; and $Z$
projects to an element of $T_p P_\D$ if and only if every such Lie
derivative lies in $\D|_{\L_{\gamma}}$. That is to say, for every
vector field $Z$ on $\L_\gamma$ such that $\lie{\Delta}Z\in\D$ and
$\lie{\Gamma}Z\in\D$, $\pi_*Z$ is a well-defined element of $T_p
P_\D$; and every element of $T_p P_\D$ is of this form for some such
$Z$.

We shall show that there is an isomorphism of $\J_\gamma^0$ with
$T_pP_\D$.  We know that any $\zeta\in\J_\gamma$ lifts to a vector
field $Z$ along $\bar{\gamma}$ such that $\lie{\Gamma}Z=0$.  We shall
first show that such a vector field $Z$ can be extended to a vector
field (also denoted by $Z$) on $\L_\gamma$ such that
$\lie{\Delta}Z=\lie{\Gamma}Z=0$.

\begin{lem}\label{extend}
Let $t\mapsto Z(t)$ be a vector field along $\bar{\gamma}$ such that
$\lie{\Gamma}Z=0$. Then there is a unique vector field $(s,t)\mapsto
Z(s,t)$ on $\L_\gamma$ such that $\lie{\Delta}Z=\lie{\Gamma}Z=0$ and
$Z(t)=Z(0,t)$.
\end{lem}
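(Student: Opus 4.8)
The plan is to build $Z$ by Lie transport in the $\Delta$-direction and then to check that the $\Gamma$-condition propagates automatically off the curve $\bar\gamma$. The two requirements $\lie{\Delta}Z=0$ and $\lie{\Gamma}Z=0$ together constitute an overdetermined system on the two-dimensional leaf $\L_\gamma$, so the heart of the matter is a compatibility argument that exploits the bracket relation $[\Delta,\Gamma]=\Gamma$. Note throughout that, because $\Delta$ and $\Gamma$ are tangent to $\L_\gamma$, the Lie derivatives of the ambient-valued field $Z$ along them are well defined.

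First I would fix convenient parameters on $\L_\gamma$. Since the leaf consists of the points $(\gamma(t),e^s\dot\gamma(t))$, the pair $(s,t)$ serves as coordinates; a short computation using the degree-two homogeneity of the $\Gamma^i$ (which is forced by $[\Delta,\Gamma]=\Gamma$) shows that in these coordinates $\Delta=\partial/\partial s$ and $\Gamma=e^s\,\partial/\partial t$. In particular $\Gamma$ has no $\partial/\partial s$ component, so the curve $\bar\gamma$ (where $s=0$) is itself an integral curve of $\Gamma$. This records the point that the hypothesis $\lie{\Gamma}Z=0$ along $\bar\gamma$ is meaningful given only the data of $Z$ along $\bar\gamma$.

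Next I would define $Z$ on all of $\L_\gamma$ by solving $\lie{\Delta}Z=0$ with initial value $Z(0,t)=Z(t)$. Because the integral curves of $\Delta$ are the lines $t=\text{const}$, this is linear transport in $s$ and determines $Z(s,t)$ uniquely; uniqueness of the entire construction then follows at once, since any two solutions agreeing on $s=0$ differ by a field that is $\Delta$-transported from zero and hence vanishes. It remains to show that this $Z$ also satisfies $\lie{\Gamma}Z=0$. Setting $W=\lie{\Gamma}Z$, the Jacobi identity gives $\lie{\Delta}\lie{\Gamma}Z-\lie{\Gamma}\lie{\Delta}Z=\lie{[\Delta,\Gamma]}Z=\lie{\Gamma}Z$, and since $\lie{\Delta}Z=0$ this collapses to $\lie{\Delta}W=W$. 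Along each $\Delta$-integral curve this is a linear homogeneous first-order ODE for $W$; as $W$ vanishes at $s=0$ by hypothesis, uniqueness for the ODE forces $W\equiv0$ everywhere.

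The only genuine obstacle is the compatibility of the two transport conditions, and the step that resolves it is the Jacobi-identity computation yielding $\lie{\Delta}W=W$: it is precisely the factor coming from $[\Delta,\Gamma]=\Gamma$ that converts a potential inconsistency into a \emph{homogeneous} ODE with zero initial data, so that $W$ is compelled to vanish rather than merely being transported. Everything else is routine existence and uniqueness for linear ODEs, once one has observed that $\bar\gamma$ sits inside $s=0$ as a $\Gamma$-integral curve.
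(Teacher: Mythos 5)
Your proof is correct and follows essentially the same route as the paper: extend $Z$ over $\L_\gamma$ by $\Delta$-transport (the paper writes this as $Z(s,t)=\delta_{s*}Z(t)$ with $\delta_s$ the flow of $\Delta$), then use $[\Delta,\Gamma]=\Gamma$ together with $\lie{\Delta}Z=0$ to obtain $\lie{\Delta}(\lie{\Gamma}Z)=\lie{\Gamma}Z$, so that $\lie{\Gamma}Z$, vanishing at $s=0$, vanishes identically. The only cosmetic difference is that the paper solves this last relation explicitly as $(\lie{\Gamma}Z)(s,t)=e^s\delta_{s*}(\lie{\Gamma}Z)(t)$, where you instead invoke uniqueness for the linear homogeneous ODE along the $\Delta$-flow lines.
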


\begin{proof}
Let $\delta_s$ be the 1-parameter group generated by $\Delta$
acting on $\L_\gamma$, so that for any $(x,y)\in\L_\gamma$,
$\delta_s(x,y)=(x,e^sy)$. Let $Z(t)$ be any vector field along
$\bar{\gamma}$ and set $Z(s,t)=\delta_{s*}Z(t)$. Then
$\lie{\Delta}Z=0$ and $Z(0,t)=Z(t)$; moreover $Z(s,t)$ is
uniquely determined by these properties. Now
$[\Delta,\Gamma]=\Gamma$, whence
$\lie{\Delta}\lie{\Gamma}Z=\lie{\Gamma}\lie{\Delta}Z+\lie{\Gamma}Z
=\lie{\Gamma}Z$. It follows that
$(\lie{\Gamma}Z)(s,t)=e^s\delta_{s*}(\lie{\Gamma}Z)(0,t)
=e^s\delta_{s*}(\lie{\Gamma}Z)(t)$. So if $(\lie{\Gamma}Z)(t)=0$
then $(\lie{\Gamma}Z)(s,t)=0$.
\end{proof}

We define a linear map $j:\J_\gamma\to T_pP_\D$ as follows. For
$\zeta\in\J_\gamma$ let $Z(t)$ be the corresponding vector field
along $\bar{\gamma}$, and $Z(s,t)$ the vector field on $\L_\gamma$
whose existence is guaranteed by the lemma. Then $\pi_*Z$ is a
well-defined element of $T_pP_\D$, and we set $\pi_*Z=j(\zeta)$. We
shall show that the kernel of $j$ is spanned by the trivial Jacobi
fields $\dot{\gamma}$ and $t\dot{\gamma}$, whence $j:\J^0_\gamma\to
T_pP_\D$ is an isomorphism by dimension.

\begin{prop}
The linear map $j:\J^0_\gamma\to T_pP_\D$ is an isomorphism.
\end{prop}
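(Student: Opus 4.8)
The plan is to exploit the dimension count. The map $j:\J_\gamma\to T_pP_\D$ is already linear and well-defined by the preceding lemma, so since $\dim\J_\gamma=2n$ while $\dim T_pP_\D=2n-2$, and since $\J^0_\gamma$ is the quotient of $\J_\gamma$ by the $2$-dimensional space spanned by the trivial Jacobi fields $\dot\gamma$ and $t\dot\gamma$, it suffices to prove that $\ker j$ is \emph{exactly} this $2$-dimensional space. Granting that, $j$ descends to an injective linear map $\J^0_\gamma\to T_pP_\D$ between spaces of equal dimension $2n-2$, hence an isomorphism. So the whole proof reduces to computing $\ker j$.

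First I would check the easy inclusion, that the trivial Jacobi fields lie in $\ker j$. The lift of $\dot\gamma$ is $\Gamma|_{\bar\gamma}$ and the lift of $t\dot\gamma$ is $(t\Gamma+\Delta)|_{\bar\gamma}$; both of these take values in $\D$, since $\Gamma,\Delta\in\D$. Their extensions to $\L_\gamma$ furnished by Lemma~\ref{extend} are $Z(s,t)=\delta_{s*}Z(t)$, and because $\delta_s$ is the flow of $\Delta\in\D$ it preserves the leaf $\L_\gamma$ and carries $\D$ into $\D$. Hence the extended fields remain tangent to $\L_\gamma=\pi^{-1}(p)$, so $\pi_*Z=0$ and $j(\dot\gamma)=j(t\dot\gamma)=0$.

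The substantive part is the reverse inclusion. Suppose $j(\zeta)=0$, so that the extension $Z$ of the lift of $\zeta$ is everywhere tangent to the fibre $\L_\gamma$; since $\D$ is spanned pointwise on $\L_\gamma$ by $\Delta$ and $\Gamma$ we may write $Z=a\Gamma+b\Delta$ for functions $a,b$ on $\L_\gamma$. Imposing the defining conditions $\lie{\Delta}Z=\lie{\Gamma}Z=0$ and using $[\Delta,\Gamma]=\Gamma$ gives the coupled system
\[
\Delta(a)=-a,\quad \Delta(b)=0,\quad \Gamma(a)=b,\quad \Gamma(b)=0.
\]
The key technical step---and the one I expect to be the main obstacle---is to solve this system cleanly. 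I would introduce coordinates $(s,t)$ on $\L_\gamma$ with $\delta_s(\bar\gamma(t))$ as the point, so that $\Delta=\partial/\partial s$, and deduce from $[\Delta,\Gamma]=\Gamma$ together with $\Gamma|_{s=0}=\partial/\partial t$ the normal form $\Gamma=e^{s}\,\partial/\partial t$. The equations then integrate to $b=c_1$ (a constant) and $a=e^{-s}(c_1t+c_0)$. Restricting to $s=0$ recovers the lift $Z(t)=c_0\Gamma+c_1(t\Gamma+\Delta)$ along $\bar\gamma$, whence $\zeta=c_0\dot\gamma+c_1\,t\dot\gamma$ is trivial. This shows $\ker j=\langle\dot\gamma,t\dot\gamma\rangle$ and completes the argument.
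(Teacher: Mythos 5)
Your proof is correct and follows essentially the same route as the paper: both reduce the claim to showing that $\ker j$ is exactly the span of the trivial Jacobi fields, and both do so by working in coordinates $(s,t)$ on $\L_\gamma$ in which $\Delta=\partial/\partial s$ and $\Gamma=e^s\,\partial/\partial t$, then solving the resulting linear system to find $Z(0,t)=c_0\Gamma+c_1(t\Gamma+\Delta)$. The only cosmetic difference is that you obtain the normal form $\Gamma=e^s\,\partial/\partial t$ from the bracket relation $[\Delta,\Gamma]=\Gamma$ together with the initial condition at $s=0$, whereas the paper computes it directly from the degree-2 homogeneity of the spray coefficients $\Gamma^i$; both justifications are valid.
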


\begin{proof}
Let us denote by $\varphi$ the map $\R^2\to\L_\gamma$ given by
$\varphi(s,t)=(\gamma(t),e^s\dot{\gamma}(t))$. Then evidently
\[
\varphi_*\left(\vf{s}\right)=\Delta_{\varphi(s,t)}.
\]
Furthermore
\[
\varphi_*\left(e^s\vf{t}\right)=
e^s\left(\dot{\gamma}^i(t)\vf{x^i}
+e^s\ddot{\gamma}(t)^i\vf{y^i}\right).
\]
But $\ddot{\gamma}(t)^i=-2\Gamma^i(\gamma(t),\dot{\gamma}(t))$,
and $\Gamma^i$ is positively homogeneous of degree 2 in the
fibre coordinates; thus
\begin{align*}
\varphi_*\left(e^s\vf{t}\right)&=
e^s\dot{\gamma}^i(t)\vf{x^i}
-2e^{2s}\Gamma^i(\gamma(t),\dot{\gamma}(t))\vf{y^i}\\
&=e^s\dot{\gamma}^i(t)\vf{x^i}
-2\Gamma^i(\gamma(t),e^s\dot{\gamma}(t))\vf{y^i}\\
&=\Gamma_{\varphi(s,t)}.
\end{align*}
Thus we can use $s$ and $t$ as coordinates on $\L_\gamma$, with
\[
\Delta=\vf{s},\quad \Gamma=e^s\vf{t}.
\]
A vector field on $\L_\gamma$ which projects onto $0\in T_pP_\D$
takes the form
\[
Z(s,t)=\sigma(s,t)\vf{s}+\tau(s,t)\vf{t}.
\]
Then $\lie{\Delta}Z=0$ if and only if $\sigma$ and $\tau$ are
independent of $s$. Furthermore,
\[\lie{\Gamma}Z=e^s\left(\fpd{\sigma}{t}\vf{s}
+\left(\fpd{\tau}{t}-\sigma\right)\vf{t}\right),
\]
so that $\lie{\Gamma}Z=0$ if and only if $\sigma=a$ is constant and
$\tau(s,t)=at+b$ where $b$ is constant. Then
\[
Z(s,t)=a\Delta_{\varphi(s,t)}+(at+b)e^{-s}\Gamma_{\varphi(s,t)},
\]
and in particular
\[
Z(0,t)=a(t\Gamma_{\bar{\gamma}(t)}+\Delta_{\bar{\gamma}(t)})
+b\Gamma_{\bar{\gamma}(t)}.
\]
That is to say, $Z$ corresponds to a linear combination of trivial
Jacobi fields, and so the kernel of $j$ is spanned by the trivial Jacobi fields.
\end{proof}

Now let $\gamma$ be a geodesic of $\Gamma$ through $x\in M$,
with $\gamma(0)=x$ for convenience. Denote by $\J_{\gamma,0}$
the space of Jacobi fields along $\gamma$ which vanish at $x$,
and $\J_{\gamma,0}^0$ the quotient of $\J_{\gamma,0}$ by the
constant multiples of $t\dot{\gamma}(t)$. Then $j$ maps
$\J_{\gamma,0}^0$ onto $T_p\hat{x}$, and is an isomorphism.

Let $\omega$ be a 2-form on $\TMO$ such that
$\lie{\Gamma}\omega=0$ (Theorem~\ref{old} assumption 2). Let
$\zeta_1$ and $\zeta_2$ be Jacobi fields along $\gamma$, and
$Z_1$ and $Z_2$ the corresponding vector fields on $\L_\gamma$
as given in Lemma~\ref{extend}. Then since
$\lie{\Gamma}Z_1=\lie{\Gamma}Z_2=0$,
$\Gamma(\omega(Z_1,Z_2))=0$, that is to say, $\omega(Z_1,Z_2)$
is constant along every integral curve of $\Gamma$ in
$\L_\gamma$. Suppose further that for $x\in M$,
$\omega|_{\TxMO}=0$ (Theorem~\ref{old} assumption 3). If now
$\zeta_1(0)=\zeta_2(0)=0$, so that $Z_1(0)$ and $Z_2(0)$ are
vertical, then $\omega(Z_1,Z_2)=0$ on the ray $s\mapsto
e^s\vlift{\dot{\gamma}(0)}$ in $\L_\gamma$. But this is
transversal to $\Gamma$ in $\L_\gamma$, so $\omega(Z_1,Z_2)=0$
on $\L_\gamma$.

It is this property of $\omega$ which corresponds to the property that
the submanifolds $\hat{x}$ are Lagrangian in Theorem~\ref{AP} (when
$\omega$ satisfies the conditions of Theorem~\ref{old} and
Corollary~\ref{oldclosed}):\ note that $\pi(\L_\gamma)=p\in\hat{x}$,
and we have in effect shown that $\Omega_p$ (where $\Omega$ is the
projection of $\omega$) vanishes on any pair of vectors in
$T_p\hat{x}$.

We consider next the positive quasi-definiteness condition. Take
$x\in M$ and $y\in\TxMO$, and let $\gamma$ be the geodesic with
$\gamma(0)=x$ and $\dot{\gamma}(0)=y$. Let $v\in T_xM$:\ there
are unique Jacobi fields $\zeta_1(t),\zeta_2(t)$ along $\gamma$
such that
\[
\zeta_1(0)=v,\quad \nabla\zeta_1(0)=0;\qquad
\zeta_2(0)=0,\quad\nabla\zeta_2(0)=v.
\]
Let $Z_1,Z_2$ be the corresponding vector fields along
$\bar{\gamma}$ such that $\lie{\Gamma}Z_1=\lie{\Gamma}Z_2=0$.
Then the quadratic form $q_{(x,y)}$ on $T_xM$
defined in Section \ref{2form} is given by
\[
q_{(x,y)}(v)=\omega_{(x,y)}(\hlift{v},\vlift{v})
=\omega_{(x,y)}(Z_1,Z_2)=\Omega_p(\zeta_1,\zeta_2)
\]
where $p=\pi(x,y)$ and $\zeta_1,\zeta_2\in T_pP_{\mathcal{D}}$
are the elements determined by the Jacobi fields
$\zeta_1(t),\zeta_2(t)$ by means of $j$. We have the following version of
Corollary \ref{oldfins}.

\begin{cor}
A projective class of sprays is the geodesic class
of a local Finsler function if and only if there is a symplectic
2-form $\Omega$ on $P_{\mathcal{D}}$ such that $\hat{x}$ is a
Lagrangian submanifold of $P_{\mathcal{D}}$ with respect to
$\Omega$, for every $x\in M$, and moreover
$\Omega_p(\zeta_1,\zeta_2)>0$ for all non-zero
$\zeta_1,\zeta_2\in T_pP_{\mathcal{D}}$ of the special form
described above.
\end{cor}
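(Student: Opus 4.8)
The plan is to obtain the corollary by combining Theorem~\ref{AP} with the identity
\[
q_{(x,y)}(v)=\omega_{(x,y)}(\hlift{v},\vlift{v})=\Omega_p(\zeta_1,\zeta_2)
\]
established just above, so that the only genuinely new ingredient is the passage from a pseudo-Finsler function to a Finsler function, which is governed entirely by positive quasi-definiteness of $q$ and is the content of Corollary~\ref{oldfins}. Throughout I fix a spray $\Gamma$ of the class; for $x\in M$ and $y\in\TxMO$ I write $p=\pi(x,y)$ and take $\gamma$ to be the geodesic with $\gamma(0)=x$, $\dot{\gamma}(0)=y$.

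First I would make precise the dictionary between the positivity condition on $\Omega$ and positive quasi-definiteness of $q$. Given $v\in T_xM$, let $\zeta_1,\zeta_2$ be the special Jacobi fields with $\zeta_1(0)=v$, $\nabla\zeta_1(0)=0$ and $\zeta_2(0)=0$, $\nabla\zeta_2(0)=v$, and also denote by $\zeta_1,\zeta_2$ their images under $j$ in $T_pP_\D$. From the kernel computation in the proposition preceding this corollary, a trivial combination $a\dot{\gamma}+b\,t\dot{\gamma}$ has initial value $ay$ and initial covariant derivative $by$; hence $j(\zeta_1)=0$ forces $b=0$ and $v=ay$, while $j(\zeta_2)=0$ forces $a=0$ and $v=by$, so that both images are non-zero precisely when $v$ is not a scalar multiple of $y$. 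Since $q_{(x,y)}(v)=\Omega_p(\zeta_1,\zeta_2)$, the requirement that $\Omega_p(\zeta_1,\zeta_2)>0$ for all non-zero special pairs is equivalent to $q_{(x,y)}(v)>0$ for every $v$ not proportional to $y$; and as $q_{(x,y)}(ky)=k^2q_{(x,y)}(y)=0$, this is precisely positive quasi-definiteness of $q$ on all of $\TMO$.

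With this dictionary the backward implication is immediate. Given a symplectic $\Omega$ for which every $\hat{x}$ is Lagrangian, Theorem~\ref{AP} yields a pseudo-Finsler function whose Hilbert $2$-form is $\omega=\pi^*\Omega$; the extra hypothesis then says $q$ is positive quasi-definite everywhere, and Corollary~\ref{oldfins} produces a local Finsler function near every point. For the forward implication I would start from a local Finsler function $F$: its Hessian $h_{ij}=V_i(\theta_j)$ satisfies $h_{ij}y^j=0$ and is positive quasi-definite, so the corresponding $\omega=h_{ij}\,dx^i\wedge\phi^j$ has $q_{(x,y)}(v)=h_{ij}v^iv^j$ positive quasi-definite and meets the conditions of Theorem~\ref{old}; passing $\omega$ to the quotient as in the proof of Theorem~\ref{AP} gives a symplectic $\Omega$ on $P_\D$ with each $\hat{x}$ Lagrangian, and the dictionary converts positive quasi-definiteness back into the positivity condition on $\Omega$.

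The step I expect to be the main obstacle is this last passage to a \emph{globally} defined $\Omega$. A local Finsler function supplies $\omega$, and hence a candidate $\Omega$, only over $\tau^{-1}U$, whereas the symplectic form in the statement lives on the whole of $P_\D$; bridging this gap is exactly the local-to-global problem that is controlled elsewhere in the paper by the cohomological hypothesis $H^2(M)=0$, and in writing the proof one must be explicit about how ``the geodesic class of a local Finsler function'' is to be reconciled with the global existence of the quotient $P_\D$ and of $\Omega$. Once that is settled the remaining verifications---that $\omega=\pi^*\Omega$ satisfies the conditions of Theorem~\ref{old} so that $q$ is defined, and that $j$ carries the special Jacobi pairs to the vectors named in the positivity condition---are routine.
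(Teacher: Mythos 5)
Your proof is correct and is essentially the paper's own argument: the paper gives no separate proof of this corollary, presenting it as the immediate combination of Theorem~\ref{AP}, Corollary~\ref{oldfins} and the identity $q_{(x,y)}(v)=\Omega_p(\zeta_1,\zeta_2)$ for the special Jacobi pairs, together with the kernel computation for $j$ --- precisely your ``dictionary''. Concerning the obstacle you flag in the forward direction: the paper reads ``the geodesic class of a local Finsler function'' in the sense of Corollary~\ref{oldfins} and Theorem~\ref{globmult}, where what exists globally is the 2-form $\omega$ on $\TMO$ satisfying the conditions of Theorem~\ref{old} (equivalently, a global pseudo-Finsler function with positive quasi-definite Hessian), and only the Finsler function itself is local, obtained near each point by adding a total derivative; since adding a total derivative does not change the Hilbert 2-form, all these local Finsler functions share the single global $\omega$ (compare the example of Section~10, where $F$ is globally pseudo-Finsler but only locally Finsler), so no patching problem arises and your forward argument goes through verbatim. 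Note also that $H^2(M)=0$ would be the wrong tool for the worry you raise: in this paper that hypothesis governs the existence of a global semi-basic primitive $\theta$ for a given global closed $\omega$ (i.e.\ of a global pseudo-Finsler function), not the assembly of locally defined 2-forms into a global one.
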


\section{Totally-geodesic submanifolds}

In \'{A}lvarez Paiva's analysis of Hilbert's fourth problem
(\cite{AP}; see also \cite{hil4}) 2-planes in $\R^n$ play an
important role:\ see for example Theorem 4.5 of \cite{AP}, which
relates the positivity properties of an admissible 2-form
$\omega$ (our $\Omega$) to the pull-back of $\omega$ to each
two-dimensional submanifold of path space consisting of all lines
in a 2-plane. This can be generalized to the kind of situation
discussed here if we change 2-planes to two-dimensional
totally-geodesic submanifolds, as we now explain.

Let $N$ be a proper embedded submanifold of $M$.  We define a
submanifold $\hat{N}$ of $\TMO$, of twice the dimension, as follows:\
$\hat{N}=\{(x,y)\in\TMO:y\in T_xN\}$.  Thus $\hat{N}$ is $\TNO$
considered as a submanifold of $\TMO$.  Evidently $\Delta$ is tangent
to $\hat{N}$ (if $y\in T_xN$ then also $e^ty\in T_xN$).  Moreover, if
$v$ is any vector tangent to $N$ then $\vlift{v}$ is tangent to
$\hat{N}$, since $\vlift{v}_y$ is the tangent at $t=0$ to the curve
$t\mapsto y+tv$, and if $y\in T_xN$ and $v\in T_xN$ then $y+tv\in
T_xN$ for all $t$.

We say that the submanifold $N$ is {\it totally geodesic\/} with respect
to the spray $\Gamma$ if $\Gamma$ is tangent to $\hat{N}$.  Then
every geodesic $\gamma$ of $\Gamma$ in $M$ which starts at a point
$x=\gamma(0)$ of $N$ and is tangent to $N$ there (so that
$\dot{\gamma}(0)\in T_xN$) lies totally within $N$:\ it is the
projection of the integral curve of $\Gamma$ through
$(\gamma(0),\dot{\gamma}(0))$, which lies in $\hat{N}$.  Note that
since $\Delta$ is tangent to $\hat{N}$, if $\Gamma$ is tangent to
$\hat{N}$ so is any projectively-equivalent spray:\ that is, being
totally geodesic is a projective property (as it should be, since it
should be concerned with geodesic paths rather than parametrized
geodesics).

\begin{lem}
If $N$ is totally geodesic then if $v$ is any vector tangent to $N$,
$\hlift{v}$ is tangent to $\hat{N}$.
\end{lem}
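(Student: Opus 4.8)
The plan is to work in coordinates adapted to the submanifold. First I would choose slice coordinates $(x^\alpha,x^a)$ on $M$ near a point of $N$, with indices $\alpha,\beta,\ldots$ running over $1,\ldots,\dim N$ and $a,b,\ldots$ over the transverse values, so that $N=\{x^a=0\}$. In the induced coordinates $(x^i,y^i)$ on $\TMO$ the submanifold $\hat N=\TNO$ is then $\{x^a=0,\ y^a=0\}$, and a vector is tangent to $\hat N$ precisely when its $\partial/\partial x^a$- and $\partial/\partial y^a$-components vanish. Since $\Gamma=y^i\,\partial/\partial x^i-2\Gamma^i\,\partial/\partial y^i$ and the $\partial/\partial x^a$-component $y^a$ already vanishes on $\hat N$, the totally-geodesic hypothesis that $\Gamma$ is tangent to $\hat N$ reduces to the single requirement that the transverse spray coefficients vanish on $\hat N$, i.e.\ $\Gamma^a|_{\hat N}=0$ for every $a$. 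Writing $v=v^\alpha\,\partial/\partial x^\alpha$ (so $v^a=0$) for a tangent vector to $N$, one has $\hlift v=v^\alpha\big(\partial/\partial x^\alpha-\Gamma^j_\alpha\,\partial/\partial y^j\big)$ with $\Gamma^j_\alpha=\fpd{\Gamma^j}{y^\alpha}$; its $\partial/\partial x^a$-component is $v^a=0$, so the only thing that can fail is the transverse vertical part, with coefficient $-v^\alpha\Gamma^a_\alpha$ evaluated on $\hat N$. Thus the lemma is equivalent to the assertion that $\Gamma^a_\alpha|_{\hat N}=0$.

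The crux is to pass from $\Gamma^a|_{\hat N}=0$ to $\Gamma^a_\alpha|_{\hat N}=0$. The totally-geodesic condition says that, as a function of the tangential variables $(x^\beta,y^\beta)$, the coefficient $\Gamma^a(x^\beta,0,y^\beta,0)$ vanishes identically; differentiating this identity with respect to a tangential fibre coordinate $y^\alpha$ therefore gives $\fpd{\Gamma^a}{y^\alpha}(x^\beta,0,y^\beta,0)=0$, which is exactly $\Gamma^a_\alpha|_{\hat N}=0$. The step that needs care — and is really the only subtlety — is that differentiating in the $y^\alpha$ directions commutes with restriction to $\hat N$; this is legitimate precisely because those directions are tangent to $\hat N$ (they are the vertical lifts of $\partial/\partial x^\alpha$, which we have already shown are tangent), whereas differentiating in a transverse fibre direction $y^a$ would not be allowed. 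Feeding $\Gamma^a_\alpha|_{\hat N}=0$ back into the expression for $\hlift v$ shows its transverse vertical component vanishes, so $\hlift v$ is tangent to $\hat N$.

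Finally, I would note a coordinate-free reading that makes the result transparent, while flagging a pitfall to avoid. Because $\Gamma$ and $\Delta$ are both tangent to $\hat N=\TNO$ and $\tau_*\Gamma=y$, their restrictions make $\Gamma|_{\hat N}$ a spray on $N$; the discrepancy between the horizontal lift in $M$ and the horizontal lift for this induced spray on $N$ is exactly the transverse term $-v^\alpha\Gamma^a_\alpha\,\partial/\partial y^a$, so the two lifts coincide once $\Gamma^a_\alpha|_{\hat N}=0$. One might also recall the identity $[\Gamma,\vlift V]=-\hlift V+\vlift{(\nabla V)}$ and observe that $[\Gamma,\vlift V]$ is tangent to $\hat N$ as the bracket of two fields tangent to $\hat N$; but separating its horizontal and vertical parts and concluding that each is separately tangent already presupposes the lemma, so I would \emph{not} argue that way and would instead rely on the direct differentiation of the totally-geodesic identity described above.
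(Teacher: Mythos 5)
Your proof is correct and takes essentially the same route as the paper's: adapted coordinates in which $\hat{N}$ is a coordinate slice, reduction of the totally-geodesic hypothesis to the vanishing of the transverse spray coefficients on $\hat{N}$, and differentiation of that identity along the tangential fibre directions to conclude that the mixed coefficients $\Gamma^a_\alpha$ vanish on $\hat{N}$ (the paper writes these as $\Gamma^\alpha_a$, using the opposite index convention). The only differences are notational, together with your explicit justification that differentiating in directions tangent to $\hat{N}$ commutes with restriction, a point the paper leaves implicit.
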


\begin{proof}
We can find coordinates on $M$ such that $N$ is
given by $x^\alpha=0$, $\alpha=\dim{N}+1,\ldots,n$.  We use $a,b$ for
indices $1,\ldots,\dim{N}$.  Clearly $\hat{N}$ is given by
$x^\alpha=0$, $y^\alpha=0$.  With
\[
\Gamma=y^i\vf{x^i}-2\Gamma^i\vf{y^i},
\]
$N$ is totally geodesic if and only if
$\Gamma^\alpha(x^a,0,y^a,0)=0$. Now
\[
H_a=\vf{x^a}-\Gamma^i_a\vf{y^i}.
\]
But on $\hat{N}$
\[
\Gamma^\alpha_a(x^b,0,y^b,0)
=\fpd{\Gamma^\alpha}{y^a}(x^b,0,y^b,0)=0.
\]
Thus on $\hat{N}$
\[
H_a=\vf{x^a}-\Gamma^b_a\vf{y^b},
\]
which is tangent to $\hat{N}$.
\end{proof}

For convenience, when speaking of vector fields in relation to a
submanifold we shall use `on' to mean not just `defined on' but also
`tangent to'.

When $N$ is totally geodesic the space of vector fields on
$\hat{N}$ is spanned by the vector fields $\vlift{X}$,
$\hlift{X}$, where $X$ is any vector field on $N$; notice that
$\vlift{X}$ and $\hlift{X}$ coincide with $\Delta$ and $\Gamma$
where $y=X(x)$, that is, on the image of the corresponding
section. Now
\[
[\Gamma,\vlift{X}]=-\hlift{X}+\vlift{(\nabla X)},\quad
[\Gamma,\hlift{X}]=\hlift{(\nabla X)}+\vlift{\Phi(X)}.
\]
For a totally-geodesic submanifold these formulas make sense on
$\hat{N}$ with $X$ any vector field on $N$. Then since $\Gamma$,
$\vlift{X}$ and $\hlift{X}$ are all tangent to $\hat{N}$, so is
$\vlift{(\nabla X)}$, and so is $\vlift{\Phi(X)}$. Of course
$[\Delta,\vlift{X}]=-\vlift{X}$ and $[\Delta,\hlift{X}]=0$.

We now consider two-dimensional totally-geodesic submanifolds.  Any such
submanifold $N$ defines a two-dimensional submanifold $\bar{N}$ of
$P_\D$, whose points consist of geodesic paths in $N$; alternatively,
$\bar{N}=\pi(\hat{N})$ where $\pi$ is the projection $\TMO\to P_\D$.

Suppose we have a closed 2-form $\omega$ on $\TMO$ whose
characteristic distribution is spanned by $\Gamma$ and $\Delta$,
as in Theorem \ref{old}. Then $\omega=h_{ij}dx^i\wedge \phi^j$.
As we know, to determine whether $h$ is positive quasi-definite
at any $(x,y)\in\TMO$ we must consider
$\omega_{(x,y)}(\hlift{v},\vlift{v})$ as $v$ ranges over $T_xM$.
Moreover, $\omega$ determines a symplectic form $\Omega$ on
$P_\D$, according to Theorem \ref{AP}.

\begin{prop}
Let $N$ be a two-dimensional
totally-geodesic submanifold of $M$, $\bar{N}=\pi(\hat{N})$.
Then for any $p\in\bar{N}$ and any $\xi,\eta\in T_p\bar{N}$,
$\Omega_p(\xi,\eta)=\pm\omega_{(x,y)}(\hlift{v},\vlift{v})$ where
$(x,y)\in\hat{N}$ with $\pi(x,y)=p$, for some $v\in T_xN$.
\end{prop}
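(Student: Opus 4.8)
The plan is to carry out the whole argument on the submanifold $\hat N=\TNO\subset\TMO$, exploiting the hypothesis that $N$ is totally geodesic. Since $N$ is totally geodesic both $\Gamma$ and $\Delta$ are tangent to $\hat N$, so the involutive distribution $\D=\langle\Gamma,\Delta\rangle$ is tangent to $\hat N$; hence $\pi$ restricts to $\hat N$ with the leaves of $\D|_{\hat N}$ as its fibres, $\bar N=\pi(\hat N)$ is two-dimensional, and the kernel of $\pi_*$ on $T_{(x,y)}\hat N$ is exactly $\D_{(x,y)}$. First I would fix $(x,y)\in\hat N$ with $\pi(x,y)=p$ and recall that $T_{(x,y)}\hat N$ is spanned by the lifts $\hlift X,\vlift X$ with $X\in T_xN$, the horizontal lift being tangent to $\hat N$ by the preceding lemma. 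Because $y\in T_xN$ is non-zero and $\dim N=2$, I would choose a basis $\{y,v\}$ of $T_xN$; then $\hlift y=\Gamma_{(x,y)}$ and $\vlift y=\Delta_{(x,y)}$ span $\D_{(x,y)}$, and $T_{(x,y)}\hat N=\langle\hlift y,\vlift y,\hlift v,\vlift v\rangle$.

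Next I would evaluate $\omega$ on this frame. By assumption~3 of Theorem~\ref{old} and part~1 of Corollary~\ref{oldclosed}, the vertical and horizontal subspaces are isotropic and $\omega(\hlift X,\vlift Y)=\omega(\hlift Y,\vlift X)$; combined with the Helmholtz relation $h_{ij}y^j=0$, which forces $\omega(\hlift y,\vlift v)=\omega(\hlift v,\vlift y)=\omega(\hlift y,\vlift y)=0$, this shows that among the four frame vectors the only non-vanishing pairing is $\omega(\hlift v,\vlift v)$. Since $\pi_*$ has kernel $\D_{(x,y)}$ and $\D_{(x,y)}\subset T_{(x,y)}\hat N$, the projection identifies $T_p\bar N$ with $T_{(x,y)}\hat N/\D_{(x,y)}$, which is spanned by the images of $\hlift v$ and $\vlift v$.

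It then remains to compute. I would lift $\xi,\eta\in T_p\bar N$ to $T_{(x,y)}\hat N$; modulo $\D$ the lifts may be taken to be $\tilde\xi=a_1\hlift v+b_1\vlift v$ and $\tilde\eta=a_2\hlift v+b_2\vlift v$. Using $\pi^*\Omega=\omega$ together with the frame computation,
\[
\Omega_p(\xi,\eta)=\omega_{(x,y)}(\tilde\xi,\tilde\eta)=(a_1b_2-a_2b_1)\,\omega_{(x,y)}(\hlift v,\vlift v).
\]
Writing $c=a_1b_2-a_2b_1$ and replacing $v$ by $v'=\sqrt{|c|}\,v$ (taking $v'=0$ when $c=0$), the quadratic dependence $\omega(\hlift{v'},\vlift{v'})=|c|\,\omega(\hlift v,\vlift v)$ on the scaling absorbs the factor $|c|$ and leaves only the sign $\operatorname{sgn}(c)$, giving $\Omega_p(\xi,\eta)=\pm\,\omega_{(x,y)}(\hlift{v'},\vlift{v'})$, which is the assertion.

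The step I expect to need the most care is the identification $T_p\bar N\cong T_{(x,y)}\hat N/\D_{(x,y)}$ and the observation that the four-dimensional $T_{(x,y)}\hat N$ contributes only the single pairing $\omega(\hlift v,\vlift v)$: this rests on the rank drop of the restricted Hessian $h_{ab}$ forced by $h_{ij}y^j=0$ in the two-dimensional fibre $T_xN$, which is exactly what makes $\dim\bar N=2$ consistent with a scalar-valued $\Omega_p$. The remaining manipulations, in particular the rescaling of $v$ that produces the ambiguous sign, are then routine.
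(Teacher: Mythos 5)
Your proof is correct, but it takes a genuinely different route from the paper's. The paper argues dynamically along a geodesic $\gamma$ in $N$: it represents $\xi,\eta\in T_p\bar{N}$ by Jacobi fields $\xi(t),\eta(t)$ tangent to $N$ (using the isomorphism $j$ of Section 8), passes to the Lie-transported fields $X=\hlift{\xi}+\vlift{(\nabla\xi)}$, $Y=\hlift{\eta}+\vlift{(\nabla\eta)}$, exploits the constancy of $\omega(X,Y)$ along $\bar{\gamma}$, and expands in a moving frame $\{\nu(t),\dot{\gamma}(t)\}$ of $T_{\gamma(t)}N$ to obtain the Wronskian factor $(a\dot{b}-b\dot{a})(0)$ before rescaling $\nu(0)$. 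You instead work purely pointwise at a single $(x,y)\in\hat{N}$: you use the lemma on horizontal lifts to get the frame $\{\hlift{y},\vlift{y},\hlift{v},\vlift{v}\}$ of $T_{(x,y)}\hat{N}$, identify $T_p\bar{N}$ with $T_{(x,y)}\hat{N}/\D_{(x,y)}$ via $\ker\pi_*=\D$, observe that the isotropy conditions together with $h_{ij}y^j=0$ (and the symmetry of $h$) kill every pairing except $\omega(\hlift{v},\vlift{v})$, and extract the determinant factor $a_1b_2-a_2b_1$ by linear algebra, with the same rescaling trick at the end (your choice $v'=0$, or equally $v'=y$, handles the degenerate case). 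Both arguments are sound and yield the same sign ambiguity; yours is shorter and more elementary, avoiding the Jacobi-field machinery altogether, whereas the paper's version keeps the computation tied to the identification of $T_pP_\D$ with Jacobi fields developed in Section 8 --- the interpretation that motivates the whole section and that is reused in the subsequent positivity discussion. The only caveat, common to both proofs, is that the identification of $T_p\bar{N}$ with the image of $\pi_*|_{T\hat{N}}$ presupposes that $\bar{N}=\pi(\hat{N})$ is a two-dimensional submanifold of $P_\D$, which the paper asserts rather than proves; your argument inherits that assumption rather than removing it.
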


\begin{proof}
Let $\gamma$ be a geodesic of $\Gamma$ in $N$ whose path
projects to $p$; set $x=\gamma(0)\in N$, $y=\dot{\gamma}(0)\in
T_xN$. Now let $\nu(t)$ be a vector field along $\gamma$
everywhere tangent to $N$ and independent of $\dot{\gamma}$;
then $\{\nu(t),\dot{\gamma}(t)\}$ is a basis of
$T_{\gamma(t)}N$. Let $\xi,\eta\in T_pP_\D$:\ then there are
Jacobi fields $\xi(t),\eta(t)$ along $\gamma$ corresponding to
$\xi$ and $\eta$:\ furthermore, the vector fields
$X=\hlift{\xi}+\vlift{(\nabla\xi)}$,
$Y=\hlift{\eta}+\vlift{(\nabla\eta)}$ satisfy
$\lie{\Gamma}X=\lie{\Gamma}Y=0$; and
$\omega_{(\gamma(t),\dot{\gamma}(t))}(X(t),Y(t))$ is constant
and equal to $\Omega_p(\xi,\eta)$. Since $N$ is totally
geodesic, if $\xi,\eta\in T_p\bar{N}$ then
$\xi(t),\eta(t)\in T_{\gamma(t)}N$. We can express $\xi(t)$ and
$\eta(t)$ in terms of the basis $\{\nu(t),\dot{\gamma}(t)\}$:\ say
$\xi(t)=a(t)\nu(t)\pmod{\dot{\gamma}(t)}$,
$\eta(t)=b(t)\nu(t)\pmod{\dot{\gamma}(t)}$. Then
\[
\nabla\xi=\dot{a}\nu+a\nabla\nu,\quad
\nabla\eta=\dot{b}\nu+b\nabla\nu\pmod{\dot{\gamma}(t)},
\]
and so
\begin{align*}
\Omega_p(\xi,\eta)&=\omega(X,Y)\\
&=\omega(\hlift{\xi}+\vlift{(\nabla\xi)},\hlift{\eta}+\vlift{(\nabla\eta)})\\
&=\omega(\hlift{\xi},\vlift{(\nabla\eta)})
-\omega(\hlift{\eta},\vlift{(\nabla\xi)})\\
&=\omega(a\hlift{\nu},\dot{b}\vlift{\nu}+b\vlift{(\nabla\nu)})
-\omega(b\hlift{\nu},\dot{a}\vlift{\nu}+a\vlift{(\nabla\nu)})\\
&=(a\dot{b}-b\dot{a})\omega(\hlift{\nu},\vlift{\nu}).
\end{align*}
But $\omega(X,Y)$ is constant along $\gamma$, so in the end
\[
\Omega_p(\xi,\eta)
=(a\dot{b}-b\dot{a})(0)\omega_{(x,y)}(\hlift{\nu(0)},\vlift{\nu(0)}).
\]
Now if $a(0)\dot{b}(0)-b(0)\dot{a}(0)=0$ then $\xi(t)$ and
$\eta(t)$ are linearly dependent, and so $\Omega_p(\xi,\eta)=0$.
Otherwise, one can scale $\nu$ to eliminate the overall scalar
factor:\ that is, set
\[
v=\frac{1}{\sqrt{|a(0)\dot{b}(0)-b(0)\dot{a}(0)|}}\nu(0).\qedhere
\]
\end{proof}

Now $h$ is everywhere positive quasi-definite if and only if for
every $(x,y)\in \TMO$ and for every $v\in T_x M$,
$\omega_{(x,y)}(\hlift{v},\vlift{v})\geq 0$, and
$\omega_{(x,y)}(\hlift{v},\vlift{v})= 0$ if and only if $v$ is a
scalar multiple of $y$.
Suppose that $\Gamma$ has the property that
for every $x\in M$ and every two-dimensional subspace of $T_xM$ there is
a totally-geodesic submanifold $N$ through $x$ with the given subspace
as its tangent space.  Then for every point $p\in P_D$ and every
two-dimensional subspace of $T_p P_\D$ there is a two-dimensional
submanifold $\bar{N}$ through $p$ with the given subspace as its
tangent subspace. We conclude therefore:

\begin{prop}

Let $\Gamma$ be such that for every $x\in M$ and every two-dimensional
subspace of $T_xM$ there is a totally-geodesic submanifold
through $x$ with the given subspace as its tangent space. If $h$ is everywhere
positive quasi-definite, the pull-back of $\Omega$ to any
submanifold $\bar{N}$ as above is non-vanishing (i.e.\ it is a volume form).
Conversely, if $\Omega$ has this property then either $h$ or
$-h$ is everywhere positive quasi-definite.
\end{prop}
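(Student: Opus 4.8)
The plan is to read off both implications from the preceding Proposition, which for a two-dimensional totally-geodesic $N$ and a point $p\in\bar N=\pi(\hat N)$ expresses $\Omega_p(\xi,\eta)$ as $\pm\,\omega_{(x,y)}(\hlift v,\vlift v)=\pm\,h_{ij}(y)v^iv^j$ for a representative $v\in T_xN$, together with the elementary fact that a 2-form on a 2-dimensional manifold is a volume form exactly when it vanishes nowhere.

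First I would treat the direct implication. Fix such an $N$, a point $p\in\bar N$, and a basis $\xi,\eta$ of the two-dimensional space $T_p\bar N$. The preceding Proposition furnishes $(x,y)\in\hat N$ with $\pi(x,y)=p$ and a representative $v\in T_xN$; by construction $v$ is a multiple of the field $\nu(0)$, which was chosen independent of $y=\dot\gamma(0)$, while the scalar $a(0)\dot b(0)-b(0)\dot a(0)$ appearing in that proof is non-zero precisely because $\xi,\eta$ are independent, so $v\neq0$ and $v$ is not a multiple of $y$. Positive quasi-definiteness of $h$ then gives $h_{ij}(y)v^iv^j>0$, hence $\Omega_p(\xi,\eta)\neq0$. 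As $p$ and the basis were arbitrary, the pull-back of $\Omega$ to $\bar N$ is nowhere zero, that is, a volume form.

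For the converse I would run the same identity in the opposite direction. Fix any $(x,y)\in\TMO$ and any $v_0\in T_xM$ not proportional to $y$; by hypothesis there is a two-dimensional totally-geodesic $N$ through $x$ with $T_xN=\langle y,v_0\rangle$, and I take $\nu(0)=v_0$. For any basis $\xi,\eta$ of $T_p\bar N$, where $p=\pi(x,y)$, the preceding Proposition gives $\Omega_p(\xi,\eta)=\pm\,h_{ij}(y)v^iv^j$ with $v$ a non-zero multiple of $v_0$ (the relevant scalar being non-zero as $\xi,\eta$ are independent); since the pull-back of $\Omega$ to $\bar N$ is assumed to be a volume form, $\Omega_p(\xi,\eta)\neq0$, and therefore $h_{ij}(y)v_0^iv_0^j\neq0$. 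Thus at the fixed $(x,y)$ the quadratic form $v\mapsto h_{ij}(y)v^iv^j$ is non-zero on every $v\notin\langle y\rangle$.

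It remains to promote this pointwise non-vanishing to the global definiteness asserted. Since $h_{ij}(y)y^j=0$, the form descends to $T_xM/\langle y\rangle\cong\R^{\,n-1}$, where it is non-zero away from the origin; because $\dim M\geq3$ the unit sphere $S^{\,n-2}$ is connected, so the form keeps a constant sign on it and is therefore definite, whence exactly one of $h,-h$ is positive quasi-definite at $(x,y)$. The sign distinguishing the positive-quasi-definite choice is a continuous, hence locally constant, function of $(x,y)$, because $h$ is smooth and positive quasi-definiteness is an open condition; on the connected manifold $\TMO$ it is therefore globally constant, and either $h$ or $-h$ is positive quasi-definite everywhere. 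I expect this final continuity-and-connectedness step to be the only delicate part, the key inputs being the connectedness of the unit sphere $S^{\,n-2}$ (guaranteed by $\dim M\geq3$) and the connectedness of $\TMO$.
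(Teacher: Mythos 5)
Your proof is correct and takes essentially the same route as the paper, which presents this proposition as an immediate consequence (``We conclude therefore'') of the preceding Proposition together with the observation that positive quasi-definiteness of $h$ is equivalent to the non-vanishing of $\omega_{(x,y)}(\hlift{v},\vlift{v})$ for all $v\notin\langle y\rangle$. The details you supply --- the non-vanishing of the factor $a(0)\dot b(0)-b(0)\dot a(0)$ for independent $\xi,\eta$, the definiteness-by-connectedness argument on $T_xM/\langle y\rangle$ (which is where $\dim M\geq 3$ enters), and the continuity/connectedness argument fixing the global sign --- are precisely the steps the paper leaves implicit, the last of these appearing almost verbatim in the paper's proof of the corollary on Segre cones at the end of Section 6.
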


The question arises, are there any spray spaces with this property ---
other than those covered by Hilbert's 4th problem, namely those
for which the paths are straight lines? For a two-dimensional
totally-geodesic submanifold $N$, with $x\in N$ and $y,v\in
T_xN$, $\Phi_y(v)\in T_xN$ also:\ that is, $\Phi_y(v)$ is a
linear combination of $y$ and $v$ (if $v$ is a multiple of $y$
then $\Phi_y(v)=0$). If this holds for all $y$ and all $v\in
T_{\tau(y)}M$ then the space must be isotropic:\
$\Phi^i_j=\lambda\delta^i_j+\mu_jy^i (=R^i_j)$. We don't know,
however, whether this is sufficient as well as necessary. But it is
well known that every isotropic space is projectively metrizable, see e.g.\ \cite{BM,hol}.

\section{Example}
The following example, which is an extension of Shen's circle example
from \cite{Shen}, was introduced in \cite{mult}.

Consider the projective class of the spray
\[
\Gamma=u\vf{x}+v\vf{y}+w\vf{z}
+\sqrt{u^2+v^2+w^2}\left(-v\vf{u}+u\vf{v}\right)
\]
defined on $T^\circ\R^3$. As we showed in \cite{mult}, the
geodesics of $\Gamma$ are spirals with axis parallel to the
$z$-axis, together with straight lines parallel to the $z$-axis
and circles in the planes $z=\mbox{constant}$. Evidently both
$\sqrt{u^2+v^2}=\mu$ and $w$ are constant; and therefore (or
directly) $\sqrt{u^2+v^2+w^2}=\lambda$ is also constant. The
geodesics are the solutions of $\ddot{x}=-\lambda\dot{y}$,
$\ddot{y}=\lambda\dot{x}$, $\ddot{z}=0$, which are
\[
x(t)=\xi+r\cos(\lambda t+\vartheta),\quad
y(t)=\eta+r\sin(\lambda t+\vartheta),\quad
z(t)=wt+z_0,
\]
where $\xi,\eta,r,\vartheta$ are constants, with
$w^2=\lambda^2(1-r^2)$. The initial point on the geodesic (the
point where $t=0$) is $(x_0,y_0,z_0)$ where
$x_0=\xi+r\cos\vartheta$, $y_0=\eta+r\sin\vartheta$. The
projections of the geodesics on the $xy$-plane are circles of
center $(\xi,\eta)$ and radius $r=\mu/\lambda$:\ note that
$0\leq r\leq 1$, the circle degenerating to a point when $r=0$.
For $w/\lambda\neq0,\pm1$ the geodesics are spirals, with axis
the line parallel to the $z$-axis through $(\xi,\eta,0)$. The
case $r=0$ corresponds to $w/\lambda=\pm1$ and the geodesics are
straight lines parallel to the $z$-axis (in both directions).
The case $r=1$ ($w=0$) gives circles of unit radius in the
planes $z=z_0$.

Consider the genuine spirals, that is,
take $r\neq 0$ and $w\neq 0$.  Note first that the circle which is the
spiral's projection on the $xy$-plane is always traversed
anticlockwise, though $z(t)$ may increase or decrease with increasing
$t$, depending on the sign of $w$.  Next, we may fix the origin of $t$
so that $z_0=0$:\ then $\vartheta$ determines the point on the circle
in the $xy$-plane where $t=0$.  Let us (in general) set
$w/\lambda=\nu$:\ then $\nu$ is constant with $-1\leq\nu\leq 1$, and
also is homogeneous of degree 0 as a function on $T^\circ\R^3$.  We can
eliminate $t$, to express the spiral paths ($\nu\neq 0$) as
\[
x=\xi+\sqrt{1-\nu^2}\cos(z/\nu+\vartheta),\quad
y=\eta+\sqrt{1-\nu^2}\sin(z/\nu+\vartheta).
\]
Then $(\xi,\eta,\nu,\vartheta)$ smoothly parametrize
the set of genuine spirals.

(However, it is not possible to parametrize smoothly the
full set of paths.)

We have a map $(x,y,z,u,v,w)\mapsto
(\xi,\eta,\nu,\vartheta)$ where
\[
\xi=x-v/\lambda,\quad
\eta=y+u/\lambda,\quad
\nu=w/\lambda,\quad
\vartheta=\arccos(v/\mu)-\lambda z/w.
\]
Then
\[
d\xi=dx-d(v/\lambda),\quad
d\eta=dy+d(u/\lambda),\quad
d\nu=d(w/\lambda).
\]
It is simplest to compute $d\vartheta$ from the implicit definition,
which can be written
\[
\cos(z/\nu+\vartheta)=v/\mu,\quad \sin(z/\nu+\vartheta)=-u/\mu,
\]
from which it follows that
\[
d\vartheta=-d(z/\nu)-\mu^{-2}(vdu-udv)
=-d(z/\nu)-
(\lambda/\mu^2)\big(vd(u/\lambda)-ud(v/\lambda)\big).
\]
The 1-forms $d\xi$, $d\eta$, $d\nu$ and $d\vartheta$ are
evidently independent.

Consider the 2-form $\Omega=d\xi\wedge d\eta+\nu d\nu\wedge
d\vartheta$. It is a symplectic form. The spiral paths through
$(x,y,z)$ map to the points $(\xi,\eta,\nu,\vartheta)$ for which
\[
\xi=x-\sqrt{1-\nu^2}\cos(z/\nu+\vartheta),\quad
\eta=y-\sqrt{1-\nu^2}\sin(z/\nu+\vartheta),
\]
where we treat $x$, $y$ and $z$ as constants. On the 2-manifold so
defined we have
\begin{align*}
d\xi&=\frac{\nu}{\sqrt{1-\nu^2}}\cos(z/\nu+\vartheta)d\nu
+\sqrt{1-\nu^2}\sin(z/\nu+\vartheta)\big(-(z/\nu^2)d\nu+d\vartheta\big)\\
d\eta&=\frac{\nu}{\sqrt{1-\nu^2}}\sin(z/\nu+\vartheta)d\nu
-\sqrt{1-\nu^2}\cos(z/\nu+\vartheta)\big(-(z/\nu^2)d\nu+d\vartheta\big),
\end{align*}
whence
\[
d\xi\wedge d\eta=-\nu d\nu\wedge d\vartheta.
\]
That is, every such 2-manifold is Lagrangian for $\Omega$.

We next compute the pull-back $\omega$ of
$\Omega$ to $T^\circ\R^3$. We do so by using the
formulas above for $d\xi$, $d\eta$ etc., but no longer treat
$x$, $y$ and $z$ as constants. We have
\begin{align*}
d\xi\wedge d\eta&=(dx-d(v/\lambda))\wedge(dy+d(u/\lambda))\\
&=dx\wedge dy+dx\wedge d(u/\lambda)+dy\wedge d(v/\lambda)
+d(u/\lambda))\wedge d(v/\lambda)).
\end{align*}
On the other hand,
\[
\nu d\nu\wedge d\vartheta=dz\wedge d(w/\lambda)
-\mu^{-2}d(w/\lambda)\wedge
\big((vw)d(u/\lambda)-(uw)d(v/\lambda)\big).
\]
But since $(u/\lambda)^2+(v/\lambda)^2+(w/\lambda)^2=1$
\[
ud(u/\lambda)+vd(v/\lambda)+wd(w/\lambda)=0,
\]
whence
\begin{align*}
wd(w/\lambda)\wedge d(u/\lambda)&
=vd(u/\lambda)\wedge d(v/\lambda),\\
wd(w/\lambda)\wedge d(v/\lambda)&
=-ud(u/\lambda)\wedge d(v/\lambda),
\end{align*}
and therefore
\[
d(w/\lambda)\wedge\big((vw)d(u/\lambda)-(uw)d(v/\lambda)\big)
=(u^2+v^2)d(u/\lambda)\wedge d(v/\lambda).
\]
Finally, we have
\[
\omega=
dx\wedge dy+dx\wedge d(u/\lambda)+dy\wedge d(v/\lambda)
+dz\wedge d(w/\lambda).
\]

The 2-form $\omega$ satisfies the conditions of Theorem
\ref{old} and Corollary \ref{oldcor}. Thus $\Gamma$ should admit
a pseudo-Finsler function.

In fact $\Gamma$ comes from the pseudo-Finsler function
\[
F(x,y,z,u,v,w)=\sqrt{u^2+v^2+w^2}+\onehalf yu-\onehalf xv.
\]
This is globally well defined but only locally a Finsler
function. A straightforward calculation confirms that its
Hilbert 2-form is $\omega$ (up to sign).

The function $F$ is a Finsler function, that is, is positive,
only for $x^2+y^2<4$. It is globally pseudo-Finsler. To obtain a
Finsler function in a neighbourhood of an arbitrary point
$(x_0,y_0,z_0)$ we can make a simple modification to
\[
\tilde{F}(x,y,z,u,v,w)=\sqrt{u^2+v^2+w^2}
+\onehalf(y-y_0)u-\onehalf(x-x_0)v;
\]
this is positive for $(x-x_0)^2+(y-y_0)^2<4$. Note that it
differs from $F$ by a total derivative.

The planes $z=\mbox{constant}$ are totally-geodesic
submanifolds. Indeed, if we denote by $N$ any such plane then
$w=0$ on the corresponding submanifold $\hat{N}$ of
$T^\circ\R^3$, and the restriction of $\Gamma$ to $\hat{N}$ is
the spray
\[
u\vf{x}+v\vf{y}-v\sqrt{u^2+v^2}\vf{u}+u\sqrt{u^2+v^2}\vf{v}
\]
of Shen's circle example. We consider this as a spray defined on
$T^\circ\R^2$. It has for its geodesics all circles
in $\R^2$ of radius 1, traversed counter-clockwise. The path
space is smoothly parametrized by the coordinates $(\xi,\eta)$
of the circles' centres.

Again, this spray is locally projectively metrizable. One local
Finsler function is the restriction of the one given for the
spiral example, namely
\[
F(x,y,u,v)=\sqrt{u^2+v^2}+\onehalf yu-\onehalf xv.
\]
A straightforward calculation leads to its Hilbert 2-form:
\[
d\theta=-dx\wedge dy+\frac{1}{\mu^3}(vdu-udv)\wedge(vdx-udy).
\]
But this is just $-d\xi\wedge d\eta$. So in this case there is a
globally-defined path space equipped with a global symplectic
form. The Hilbert 2-form passes to the path space and coincides
with this symplectic form there. Moreover, it does so globally,
despite the fact that $F$ is only locally defined as a Finsler
function (though again it is global as a pseudo-Finsler
function).

\subsubsection*{Acknowledgements}
The first author is a Guest Professor at Ghent University:\ he
is grateful to the Department of Mathematics for its
hospitality. The second author is a Postdoctoral Fellow of the
Research Foundation -- Flanders (FWO). The third author
acknowledges the support of grant no.\ 201/09/0981 for Global
Analysis and its Applications from the Czech Science Foundation.

This work is part of the {\sc irses} project {\sc geomech} (nr.\
246981) within the 7th European Community Framework Programme.

We thank  the referee for drawing our attention to references \cite{Besse} and \cite{Ferrand}.

\subsubsection*{Address for correspondence}
M.\ Crampin, 65 Mount Pleasant, Aspley Guise, Beds MK17~8JX, UK\\
m.crampin@btinternet.com

\end{document}